\newcommand{\comment}[1]{}
\definecolor{teal}{RGB}{0,128,128}
\definecolor{darkpurple}{RGB}{128,0,128}
\newcommand{\tmod}[1]{{\;\rm (mod\; #1)}}
\newcommand{\ol}{\overline}
\newtheorem{theorem}{Theorem}[section]
\newtheorem{lemma}[theorem]{Lemma}
\newtheorem{cor}[theorem]{Corollary}
\theoremstyle{definition}
\newtheorem{rem}[theorem]{Remark}
\theoremstyle{definition}
\def \cG {{\cal G}}
\def \cR {{\cal R}}
\def \Z {\mathbb Z}
\title {
Constructing uniform 2-factorizations via row-sum matrices: solutions to the Hamilton-Waterloo problem\\
}
\author{
A.\ C.\ Burgess \footnotemark[1]  
\and 
P.\ Danziger    \footnotemark[2] 
\and
A.\ Pastine     \footnotemark[3] 
\and
T.\ Traetta     \footnotemark[4] 
}
\begin{document}
\maketitle

\footnotetext[1]{Department of Mathematics and Statistics, University of New Brunswick, 100 Tucker Park Rd., Saint John, NB  E2L 4L5, Canada. Email: andrea.burgess@unb.ca}
\footnotetext[2]{Department of Mathematics, Toronto Metropolitan University, 350 Victoria St., Toronto, ON  M5B 2K3, Canada. Email: danziger@ryerson.ca}
\footnotetext[4]{DICATAM, Universit\`{a} degli Studi di Brescia, Via Branze 43, 25123 Brescia, Italy. E-mail: tommaso.traetta@unibs.it}
\footnotetext[3]{Instituto de Matem\'atica Aplicada San Luis (UNSL-CONICET),  Universidad Nacional de San Luis, San Luis, Argentina. E-mail: agpastine@unsl.edu.ar}
\comment{
\address[AB]{Department of Mathematics and Statistics, University of New Brunswick, 100 Tucker Park Rd., Saint John, NB  E2L 4L5, Canada.}
\address[PD]{Department of Mathematics, Toronto Metropolitan University, 350 Victoria St., Toronto, ON  M5B 2K3, Canada. }
\address[AP]{Instituto de Matem\'{a}tica Aplicadas de San Luis, CONICET-UNSL.}
\address[TT]{DICATAM, Universit\`{a} degli Studi di Brescia, Via Branze 43, 25123 Brescia, Italy.}

\author[AB]{A.\ C.\ Burgess}\ead{andrea.burgess@unb.ca} 
\author[PD]{P.\ Danziger}\ead{danziger@ryerson.ca} 
\author[AP]{A.\ Pastine}\ead{agpastine@unsl.edu.ar} 
\author[TT]{T.\ Traetta}\ead{tommaso.traetta@unibs.it} 
}

\begin{abstract}
In this paper, we formally introduce the concept of a row-sum matrix over an arbitrary group $G$. When $G$ is cyclic, these types of matrices have been widely used to build uniform 2-factorizations of small Cayley graphs (or, Cayley subgraphs of blown-up cycles), which themselves  factorize complete (equipartite) graphs. 

Here, we construct row-sum matrices over a class of non-abelian groups, the generalized dihedral groups, and we use them to construct uniform $2$-factorizations that solve infinitely many open cases of the Hamilton-Waterloo problem, thus filling up large parts of the gaps in the spectrum of orders for which such factorizations are known to exist.
\end{abstract}

\comment{
			\begin{keyword} 
			
			2-factorizations, Resolvable cycle decompositions, Cycle systems, Generalized Oberwolfach Problem, Hamilton-Waterloo Problem, Row-sum matrices
			
		\end{keyword}
}	

\section{Introduction}
In this paper we denote by $L=[^{\alpha_1} a_1,  \ldots, \;^{\alpha_\ell} a_\ell]$ the 
multiset containing $\alpha_i\geq0$ copies of the element $a_i\in A$, for each $i\in\{1,\ldots, \ell\}$. Note that the $a_i$s  need not be distinct. We will call such a multiset a {\em list}, even though order is not important, as we will be dealing extensively with so called $v$-lists, which are multisets as above where $\alpha_1+\alpha_2+\cdots+\alpha_\ell = v$ (see Section 2). 

Given a simple graph $G$, we denote by $V(G)$ and $E(G)$ its sets of vertices and edges, respectively. 
As usual, we denote by $C_\ell$ a \emph{cycle} of length $\ell$ (briefly, an $\ell$-cycle), and by
$(x_0,x_1,\ldots,x_{\ell-1})$ the $\ell$-cycle with edges $x_0x_1,x_1x_2,\ldots,$ $x_{\ell-1} x_0$.
A {\em factor} of $G$ is a spanning subgraph $F$ of $G$; when $F$ is $i$-regular, we speak of an $i$-factor.
In particular, a 1-factor (resp.\ a $2$-factor) of $G$ is a vertex-disjoint union of edges (cycles) whose vertices cover $V(G)$.  A $2$-factor $F$ of $G$ containing only cycles of length $\ell$ will be called a {\em $C_\ell$-factor} or {\em uniform factor}.

By $K_v^*$ we mean the \emph{complete graph} $K_v$ on $v$ vertices when $v$ is odd and $K_v-I$, that is, $K_v$ minus the edges of the $1$-factor $I$, when $v$ is even. Also, by $K_t[z]$ we denote the \emph{complete equipartite graph} with $t$ parts of size $z\geq 1$. Note that $K_t[1] = K_t$.

A \emph{2-factorization} of a simple graph $G$ is a set $\cG$ of $2$-factors of $G$ whose edge sets
partition $E(G)$. It is well known that $G$ has a 2-factorization if and only if it is regular of even degree.
However, if we require the factors of $\cG$ to have a specific structure then the problem becomes much harder.
For example, the existence of a $2$-factorization of $G$ into copies of a given $2$-factor $F$ is an open problem even when $G=K^*_v$. This is the well-known {\em Oberwolfach Problem}, originally posed by Ringel in 1967 for odd $v$.
A survey of the most relevant results on this problem, updated to 2006, can be found in 
\cite[Section VI.12]{Handbook}. For more recent results we refer the reader to \cite{BDT22}.

A factorization of the simple graph $G$ into copies of a $C_\ell$-factor is briefly called a {\em $C_\ell$-factorization} or {\em uniform factorization} of $G$. The problem of factoring $K_v^*$ into copies of a uniform 2-factor, that is, the uniform Oberwolfach Problem, has been solved  \cite{AH85, ASSW, HS91, RW71}.
\begin{theorem}[\cite{AH85, ASSW, HS91, RW71}]
\label{OP uniform}
Let $v, \ell \geq 3$ be integers.
There is a $C_\ell$-factorization of $K_v^*$ if and only if $\ell \mid v$, except that there is no $C_3$-factorization of $K_{6}^*$ or $K_{12}^*$.
\end{theorem}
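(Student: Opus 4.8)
The plan is to separate the straightforward necessity from the substantial sufficiency, and to organise the latter according to the parity of $v$, noting first that $\ell \mid v$ with $\ell$ even already forces $v$ to be even, so that for odd $v$ only odd $\ell$ can occur.

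For necessity, recall that $K_v^*$ is regular of even degree ($v-1$ when $v$ is odd, $v-2$ when $v$ is even), so by the well-known fact quoted above it admits \emph{some} $2$-factorization. Any single $C_\ell$-factor is a spanning subgraph all of whose components are $\ell$-cycles, so it partitions the $v$ vertices into blocks of size $\ell$, forcing $\ell \mid v$. The only nontrivial part of necessity is to rule out $(\ell,v)=(3,6)$ and $(3,12)$, and I would handle these by a short self-contained nonexistence argument (a counting/parity obstruction for $K_6^*$, and a finite case analysis or exhaustive check for $K_{12}^*$), since no general construction can be expected to cover these two orders.

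For sufficiency I would build each $C_\ell$-factorization from two kinds of ingredient. The first is a family of \emph{direct} constructions: for $v$ in suitable residue classes modulo $2\ell$, realise $K_v^*$ as a Cayley graph on $\Z_v$ (or on $\Z_{v/\ell}\times\Z_\ell$) and exhibit a single starter $C_\ell$-factor whose orbit under the regular action of the group yields the whole factorization. This is precisely the setting in which a row-sum matrix over $\Z_v$ is the natural bookkeeping device: its rows encode the $\ell$-cycles of the starting factor, and the requirement that every nonzero difference be covered the correct number of times (all differences once when $v$ is odd, all but the unique involution when $v$ is even, corresponding to the removed $1$-factor $I$) translates into the defining conditions on the matrix. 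The second ingredient is a family of \emph{recursive} constructions based on $C_\ell$-frames and ``filling in the holes'': from a $C_\ell$-frame of type $z^t$ together with ingredient factorizations of order $z$ (or of $z+w$ for a fixed pad $w$) one assembles a factorization of the larger order, and by iterating one reduces every admissible $v$ to a finite list of base cases.

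The main obstacle, and the reason this result is spread across several papers rather than one, is not any single construction but the \emph{coverage bookkeeping}: one must choose the residue classes handled directly and the frame types used recursively so that together they account for every $v$ with $\ell \mid v$, leaving only finitely many small orders. Each of those base cases must then be settled by an explicit (often computer-assisted) factorization, and one must certify that none of them secretly fails. I expect the delicate end to be $\ell=3$ with $v$ even (nearly Kirkman triple systems), where the generic constructions break down for the smallest orders and produce exactly the two genuine exceptions $v=6,12$; the bulk of the effort should go into this case analysis and into verifying the finite set of base factorizations, rather than into the asymptotic recursive machinery.
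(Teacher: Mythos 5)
The paper contains no proof of Theorem~\ref{OP uniform} to compare yours against: the result is imported wholesale from \cite{AH85, ASSW, HS91, RW71} (Kirkman triple systems for $v\equiv 3 \pmod 6$ in \cite{RW71}, the remaining odd-cycle cases in \cite{ASSW}, even cycle lengths in \cite{AH85}, and the even-order case $K_{2n}-F$ in \cite{HS91}) and is used here only as an ingredient, e.g.\ through Theorem~\ref{Liu} and in the proof of Theorem~\ref{main}. Judged on its own terms, your outline correctly reproduces the global architecture of that literature --- divisibility for necessity, direct algebraic constructions plus frame-type recursions for sufficiency, finitely many base cases, and ad hoc nonexistence arguments for the two exceptions --- but it is scaffolding rather than a proof: every load-bearing component (the explicit starters, the frames and their filling lemmas, the finite list of base factorizations, and the nonexistence of $C_3$-factorizations of $K_6^*$ and $K_{12}^*$, i.e.\ of nearly Kirkman triple systems of orders $6$ and $12$) is asserted or deferred, and these are precisely the parts that occupy the four cited papers.

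Beyond the deferrals, the one mechanism you do commit to concretely would fail as stated. You propose a single starter $C_\ell$-factor whose orbit under the regular action of $\Z_v$ yields the whole factorization. Count factors against orbit lengths: for $v$ odd, a $C_\ell$-factorization of $K_v$ has $(v-1)/2$ factors, while the orbit of any factor under $\Z_v$ has length dividing $v$; since $\gcd\bigl(v,(v-1)/2\bigr)=1$, the orbit has the right size only when $(v-1)/2=1$, i.e.\ $v=3$. For $v$ even one needs $(v-2)/2$ factors, and $(v-2)/2 \mid v$ forces $v\in\{4,6\}$. So a full regular orbit of one starter can never produce the factorization in any nontrivial case; one needs several base factors per orbit, or an action by a group of order $(v-1)/2$ (resp.\ $(v-2)/2$) with fixed vertices ($k$-rotational constructions), which is what the cited constructions actually do. Note also that the row-sum matrix device of the present paper does not factorize $K_v^*$ directly, as your sketch suggests, but rather the Cayley graphs $C_g[\Gamma,S]$ inside blown-up cycles (Theorem~\ref{row-sum matrix to factorizations}); recovering statements about $K_v^*$ requires the additional decomposition step visible in the proof of Theorem~\ref{main}, and an analogous gap would remain in your plan even after the orbit count is repaired.
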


A similar result when $G=K_t[z]$ has more recently been obtained in \cite{Liu00, Liu03}.

\begin{theorem}[\cite{Liu00, Liu03}]
\label{Liu} 
Let $\ell, t$ and $z$ be positive integers with $\ell\geq 3$.
There exists a $C_\ell$-factorization of $K_t[z]$ if and only if $\ell\mid tz$, $(t-1)z$ is even, 
further $\ell$ is even when $t = 2$, and
$(\ell, t, z) \not\in \{(3, 3, 2), (3, 6, 2), (3, 3, 6), (6, 2, 6)\}$.
\end{theorem}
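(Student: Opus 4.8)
\medskip
\textbf{Necessity.} Each $C_\ell$-factor partitions the $tz$ vertices into $\ell$-cycles, so $\ell\mid tz$; since $K_t[z]$ is $(t-1)z$-regular and any $2$-factorization expresses it as an edge-disjoint union of $2$-regular spanning subgraphs, the degree must be even, giving $2\mid(t-1)z$; and if $t=2$ then $K_2[z]=K_{z,z}$ is bipartite and admits no odd cycle, forcing $\ell$ to be even. For the four exceptions I would first record the identity $K_t[2]=K_{2t}^{*}$ (deleting the within-part edges of $K_{2t}$ removes precisely a $1$-factor): under it the triples $(3,3,2)$ and $(3,6,2)$ are exactly the non-existent $C_3$-factorizations of $K_6^{*}$ and $K_{12}^{*}$ of Theorem~\ref{OP uniform}, while $(3,3,6)$ and $(6,2,6)$---factorizations of $K_3[6]$ into triangles and of $K_{6,6}$ into $6$-cycles---must be excluded by a direct parity/counting argument on the possible factors.

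\medskip
\textbf{Sufficiency: the blow-up engine.} Writing $K_t[z]=K_t[\overline{K_z}]$, the core idea is to take a $C_\ell$-factorization of a complete skeleton on the $t$ parts (namely $K_t$ when $t$ is odd, or $K_t^{*}=K_t-I$ when $t$ is even, both supplied by Theorem~\ref{OP uniform}) and blow up each vertex into an independent set of size $z$. Every skeleton cycle then becomes a blown-up cycle $C_\ell[\overline{K_z}]$ and every skeleton edge becomes a copy of $K_{z,z}$, so the whole problem reduces to two ingredients: a $C_\ell$-factorization of the blown-up cycle $C_\ell[\overline{K_z}]$, and (to absorb the edges removed with $I$ in the even case) a $C_\ell$-factorization of $K_{z,z}$. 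I would construct the blown-up-cycle ingredient directly by difference/row-sum methods, developing a single suitably chosen $\ell$-cycle over $\Z_{\ell z}$ or $\Z_\ell\times\Z_z$, and I would obtain the $K_{z,z}$ ingredient from the independently settled bipartite case $t=2$ (available whenever $\ell$ is even and $(\ell,z)\neq(6,6)$).

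\medskip
\textbf{Reducing the parameter space.} Multiplicativity in $z$, via $K_t[z_1z_2]=K_t[z_1][z_2]$, lets me bootstrap from small (prime) values of $z$, and the base values $z\in\{1,2\}$ collapse to Theorem~\ref{OP uniform} through $K_t=K_t[1]$ and $K_t[2]=K_{2t}^{*}$. The blow-up engine as stated leans on $\ell$ being even (so that the residual $K_{z,z}$ pieces can themselves be $C_\ell$-factored); the complementary cases, chiefly $\ell$ odd with $t$ even, where $z$ is forced even and the bipartite leftovers cannot carry odd cycles, I would instead handle by a direct construction over a group of order $tz$---precisely the row-sum-matrix technique of this paper---placing the $\ell$-cycles as orbits of a group action. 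After these reductions only finitely many sporadic triples remain, to be cleared by ad hoc or computer-assisted constructions.

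\medskip
\textbf{Main obstacle.} The crux is the blown-up-cycle lemma: $C_\ell[\overline{K_z}]$ fails to have a $C_\ell$-factorization in exactly the small configurations that propagate to the listed exceptions (for instance $C_3[\overline{K_2}]=K_{2,2,2}$ and $C_3[\overline{K_6}]$, and the bipartite obstruction behind $K_{6,6}$). I therefore expect the real difficulty to split into (i) an explicit, uniform construction of these blown-up-cycle factorizations in all non-exceptional cases, and (ii) matching non-existence proofs for the four exceptional triples---a finite but delicate analysis that does not follow from the general construction and must be argued by counting the forced structure of $C_3$- and $C_6$-factors.
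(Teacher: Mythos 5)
First, a framing point: the paper does not prove Theorem~\ref{Liu} at all --- it is imported verbatim from Liu \cite{Liu00, Liu03} as an external ingredient --- so your sketch can only be judged on its own merits, and on those merits it has two genuine structural gaps. The first is that your blow-up engine covers only the case $\ell \mid t$: a $C_\ell$-factorization of the skeleton $K_t$ or $K_t-I$ supplied by Theorem~\ref{OP uniform} exists only when $\ell$ divides $t$, whereas the hypothesis of the theorem is merely $\ell \mid tz$. The typical (and hardest) instances have $\ell = gd$ with $g \mid t$, $d \mid z$ and $d>1$ (e.g.\ $t=3$, $z=5$, $\ell=15$), where one must factor the blown-up cycle $C_g[z]$ into factors whose cycles are \emph{longer} than the base cycle, winding $d$ times around it. Your ``blown-up-cycle lemma'' ($C_\ell[\overline{K_z}]$ into $C_\ell$-factors) is exactly the $d=1$ case, and your fallback of ``a direct construction over a group of order $tz$'' is invoked only for $\ell$ odd with $t$ even, so the generic winding case is simply not addressed. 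It is no accident that this is precisely the ingredient the present paper's machinery manufactures: in Theorem~\ref{row-sum matrix to factorizations} the factors of $C_g[\Gamma,S]$ have cycles of length $g$ times the \emph{order} of a row sum, i.e.\ cycles wrapping repeatedly around the base $g$-cycle.

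The second gap is your absorption of the deleted $1$-factor through $C_\ell$-factorizations of $K_{z,z}$, which requires $\ell \mid 2z$ and fails outright in admissible cases: for $t=6$, $z=2$, $\ell=6$ the target $K_6[2]=K_{12}^{*}$ does have a $C_6$-factorization by Theorem~\ref{OP uniform}, yet $K_{2,2}$ has only four vertices and admits no $C_6$-factor whatsoever. In such cases the blown-up matching edges must be merged into the blown-up-cycle factorizations (in the style of Hoffman--Schellenberg's treatment of $K_{2n}-F$), not factored on their own. Finally, your necessity discussion is right to route $(3,3,2)$ and $(3,6,2)$ through $K_t[2]=K_{2t}^{*}$ and Theorem~\ref{OP uniform}, but the remaining exceptions are not ``direct parity/counting'' facts: nonexistence for $(3,3,6)$ is equivalent to the nonexistence of a pair of orthogonal Latin squares of order $6$ (Tarry's exhaustive result, via resolvable transversal designs), and $(6,2,6)$ rests on a nontrivial analysis of bipartite $2$-factorizations; neither would fall to the kind of argument you gesture at.
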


We may generalise this problem to the {\em Generalized Oberwolfach Problem}, denoted  GOP$(G; \cR)$, where $\cR = \{^{\alpha_1}R_1, \ldots, \;^{\alpha_t}R_t\}$ is a list of $2$-factors of $G$, where each $R_i$ is repeated $\alpha_i$ times (with $\alpha_i$ a positive integer) and the $R_i$ are pairwise non-isomorphic.
The Generalized Oberwolfach Problem then requires that the edges of $G$ be factored into a union of $\alpha_i$ copies of $R_i$, $1\leq i\leq t$. 
If each $R_i$ is uniform, with cycles of length $a_i$, we speak of GOP$(G; [^{\alpha_1}a_1,\ldots, ^{\alpha_t}a_t])$.
Since the $R_i$ are factors and every edge of $G$ is in one of the factors $R_i$, this requires that $G$ is regular with each vertex having degree $2\sum_{i=1}^t \alpha_i$ and that if $R_i$ is a $C_{a_i}$-factor, then $a_i$ divides the order of $G$. 
Despite recent probabilistic results which show eventual existence, these results are non-constructive and give no lower bounds for their implementation and so this problem remains wide open; see \cite{BDT19} for more details on the Generalized Oberwolfach Problem.

When $\cR=\{^{\alpha}R_1, \;^{\beta}R_2\}$, then GOP$(G; \cR)$ represents the most studied variant of the Oberwolfach Problem, known as the {\em Hamilton-Wa\-ter\-loo Problem}, and denoted by  HWP$(G;R_1,R_2;\alpha,\beta)$, or HWP$(v;R_1,R_2;\alpha,\beta)$ when $G$ is $K_v^*$. This problem asks for a factorization of $G$ into 
$\alpha$ copies of $R_1$ and $\beta$ copies of $R_2$.
In the case where $R_1$ and $R_2$ are a $C_M$-factor and $C_N$-factor, respectively, we refer to 
HWP$(G;M,N;\alpha,\beta)$, or HWP$(v;M,N;\alpha,\beta)$ when $G$ is $K_v^*$, and speak of the uniform Hamilton-Waterloo Problem.
Clearly, when $\alpha = 0$ or $\beta=0$ we obtain the uniform Oberwolfach problem which is completely solved (Theorem \ref{OP uniform}). Therefore, from now on we will assume that both $\alpha$ and $\beta$ are positive integers.
Well-known obvious necessary conditions for the solvability of HWP$(G;M,N;\alpha,\beta)$ are given the following theorem.

\begin{theorem}
\label{nec} Let $G$ be a graph of order $v$, and let 
$M,N, \alpha$ and $\beta$ be  non-negative integers.
In order for a solution of HWP$(G; M,N; \alpha,\beta)$ to exist,
$M$ and $N$ must be divisors of $v$ greater than $2$, and $G$ must be regular of degree $2(\alpha+\beta)$. 
\end{theorem}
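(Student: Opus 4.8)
The plan is to derive both conclusions directly from the definition of a solution of HWP$(G;M,N;\alpha,\beta)$, namely a set $\cG$ of $2$-factors of $G$ whose edge sets partition $E(G)$, consisting of $\alpha$ copies of a $C_M$-factor together with $\beta$ copies of a $C_N$-factor. Since this is purely a necessary-conditions statement, no construction is involved; the whole argument is a matter of counting degrees and counting vertices, so I would keep it short.

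First I would establish the degree condition. Each member of $\cG$ is a $2$-factor, hence a spanning subgraph of $G$ that is $2$-regular, so every vertex of $G$ meets exactly two edges of each factor. Because the $\alpha+\beta$ factors partition $E(G)$, the degree in $G$ of any fixed vertex $x$ equals the sum of its degrees over the individual factors, which is $2(\alpha+\beta)$. As this holds for every vertex, $G$ is regular of degree $2(\alpha+\beta)$, as claimed.

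Next I would handle the divisibility and size conditions. Under the standing assumption that $\alpha>0$, the factorization contains at least one $C_M$-factor $R_1$. By definition $R_1$ is a spanning subgraph of $G$ whose components are vertex-disjoint cycles, each of length $M$, and together these cover all $v$ vertices of $G$. Hence $v$ is a sum of copies of $M$, giving $M\mid v$; moreover a cycle in a simple graph has length at least $3$, so $M\geq 3>2$. The identical argument applied to a $C_N$-factor yields $N\mid v$ and $N>2$.

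There is no substantial obstacle here: the result is immediate once the objects are unwound, and the only point requiring care is the degenerate case $\alpha=0$ or $\beta=0$, in which the corresponding divisibility constraint is vacuous and the problem reduces to the uniform Oberwolfach problem of Theorem~\ref{OP uniform}. Under the standing assumption that $\alpha,\beta>0$, both a $C_M$-factor and a $C_N$-factor genuinely occur, so all of the stated conditions follow.
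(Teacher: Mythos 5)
Your proof is correct; the paper itself states this result without proof, labelling it as ``well-known obvious necessary conditions,'' and your degree-count (each of the $\alpha+\beta$ edge-disjoint $2$-factors contributes $2$ to every vertex) together with the vertex-count over the cycles of a $C_M$-factor (so $M\mid v$ and $M\geq 3$) is exactly the standard argument the authors are implicitly invoking. You were also right to flag that the divisibility conditions on $M$ (resp.\ $N$) genuinely require $\alpha>0$ (resp.\ $\beta>0$), which matches the paper's standing assumption that both $\alpha$ and $\beta$ are positive.
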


We are interested in constructing solutions to the uniform Hamilton-Waterloo Problem. 
We point out that this case (as well as the general problem) is still open, and this is quite surprising considering that the equivalent problem of factoring $K_v^*$ into uniform factors (the uniform OP) was solved in the nineties (see Theorem \ref{OP uniform}).

For more details and some history on the problem, we refer the reader to \cite{BDT18}. 
That paper deals with the case where both $M$ and $N$ are odd positive integers and provides an almost complete solution to HWP$(v;M,N;\alpha,\beta)$ for odd $v$.
If $M$ and $N$ are both even, then HWP$(v;M,N;\alpha,\beta)$ has a solution except possibly when $\alpha=1$ or 
$\beta=1$; $\beta=3$, $v \equiv 2$ (mod $4$) and $\gcd(M,N)=2$; or $v=MN/\gcd(M,N)\equiv 2$ (mod $4$)  \cite{BryantDanziger, BDT19b}.  However, the problem is completely solved when $M$ and $N$ are even and $M$ is a divisor of $N$ \cite{BryantDanzigerDean}. The case where $M$ and $N$ have different parities is the most challenging. 
Indeed, the only case where $M\not\equiv N \tmod{2}$ that
has been completely solved is when $(M,N)=(3,4)$ 
\cite{BonviciniBuratti,  DanzigerQuattrocchiStevens, OdabasiOzkan, WangChenCao}. 
The only other cases which have been considered are when
$M$ is a divisor of $N$ \cite{AsplundEtAl, BDT18b,LeiShen}; 
$M=4$ \cite{KeranenOzkan, OdabasiOzkan}; $M=8$ \cite{WangCao}; and when $M$ and $N$ are not coprime, $M$ is odd, $N=2^kn$ and $4^k$ divides $v$ \cite{KPdiffpar}.
However, possible exceptions remain in all of these cases.
The following theorem summarizes the results in \cite{BryantDanziger,BryantDanzigerDean,BDT18,BDT18b,BDT19b, KPdiffpar}.
\begin{theorem}[\cite{BryantDanziger,BryantDanzigerDean,BDT18,BDT18b,BDT19b,KPdiffpar}]
There is a solution to HWP$(v;M,N;\alpha,\beta)$ when 
\begin{enumerate}
\item $M,N\geq 3$ are odd, $M\geq N$, $MN/\gcd(M,N)$ divides  $v$
and $v\neq MN/\gcd(M,N)$, except possibly if $\beta\in\{1,3\}$.
\item $M$ and $N$ are even, with $M>N$, and $M$ and $N$ divide $v$, except possibly if $N$ does not divide $M$, and $1 \in \{\alpha,\beta\}$; if $\beta=3$, $v\equiv 2 \pmod{4}$ and $\gcd(M,N)=2$; or if $v=MN/\gcd(M,N) \equiv 2\pmod{4}$, and $\alpha$ and $\beta$ are odd.
\item $N=2^kn$ with $k\geq 1$, $M$ and $n$ are odd, and either $M$ divides $n$, $v>6N>36M$ and $s\geq 3$; or 
$\gcd(M,n)\geq 3$, $4^k$ divides $v$, $v/(4^k\,\mathrm{lcm}(M,n))$ is at least $3$ and $1\not\in\{\alpha,\beta\}$.
\end{enumerate}
\end{theorem}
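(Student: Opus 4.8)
The plan is to recognize that this is a \emph{compilation} theorem: the sentence preceding it states outright that it ``summarizes the results'' of the six cited papers, and each of its three cases corresponds to a single parity regime for the pair $(M,N)$. Consequently no new construction is required, and the proof reduces to matching each case to its source(s) and verifying that the stated hypotheses and exception sets faithfully reproduce (or are implied by) those in the originals. Throughout, one cheap but useful consistency check is the degree condition of Theorem \ref{nec}: since $M$ (and $N$) divide $v$, the parity of $v$ is forced, so one knows in every case whether $K_v^*$ equals $K_v$ or $K_v-I$ and hence that $2(\alpha+\beta)$ equals the resulting degree.

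First I would dispatch Case 1, where $M$ and $N$ are both odd. This is exactly the regime treated in \cite{BDT18}, which gives an almost complete solution of HWP$(v;M,N;\alpha,\beta)$ for odd $v$. Here $M\mid v$ forces $v$ odd, so $K_v^*=K_v$ has degree $v-1$; I would confirm that the divisibility hypothesis $MN/\gcd(M,N)\mid v$ with $v\neq MN/\gcd(M,N)$ matches the hypotheses of \cite{BDT18}, and that the single recorded exception $\beta\in\{1,3\}$ is precisely the possible-exception set of that paper.

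Next I would handle Case 2, the both-even regime, where the work is genuinely one of \emph{assembly} across \cite{BryantDanziger}, \cite{BryantDanzigerDean}, and \cite{BDT19b}. The delicate point is the direction of divisibility: \cite{BryantDanzigerDean} completely solves the situation in which the smaller even value divides the larger, so after relabeling so that $M>N$ this is the subcase $N\mid M$, which is exactly where the first exception family (namely $N\nmid M$ together with $1\in\{\alpha,\beta\}$) must be removed. I would then check that the two remaining exception families listed, that is $\beta=3$ with $v\equiv2\tmod{4}$ and $\gcd(M,N)=2$, and $v=MN/\gcd(M,N)\equiv2\tmod{4}$ with $\alpha,\beta$ both odd, coincide with the union of the exception sets of \cite{BryantDanziger} and \cite{BDT19b}, taking care that the parity restriction ``$\alpha,\beta$ odd'' in the last family is inherited exactly rather than accidentally strengthened or weakened.

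Finally, Case 3 is a transcription of the mixed-parity results of \cite{KPdiffpar} with $N=2^k n$ and $M,n$ odd; note $N\mid v$ forces $v$ even, so $K_v^*=K_v-I$. I would verify the two sub-branches separately: the branch $M\mid n$ with $v>6N>36M$ and $s\geq3$, and the branch $\gcd(M,n)\geq3$ with $4^k\mid v$, $v/(4^k\,\mathrm{lcm}(M,n))\geq3$, and $1\notin\{\alpha,\beta\}$. The expected obstacle here is purely notational: the counting parameter $s$ and the role of $\mathrm{lcm}(M,n)$ as the relevant factor length are defined in \cite{KPdiffpar}, and I must confirm their meanings are preserved under the present notation. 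In short, each case reduces to invoking an existing theorem under matched hypotheses; the only real risk anywhere in the argument is a bookkeeping error in translating between the parametrizations and divisibility conventions of the different sources, and there is no independent combinatorial construction to carry out.
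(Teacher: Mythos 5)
Your proposal matches the paper exactly: the paper offers no proof of this theorem beyond the bracketed citations, explicitly introducing it as a summary of the results in \cite{BryantDanziger,BryantDanzigerDean,BDT18,BDT18b,BDT19b,KPdiffpar}, so your case-by-case matching of hypotheses and exception sets to those six sources (odd--odd to \cite{BDT18}, even--even assembled from \cite{BryantDanziger,BryantDanzigerDean,BDT19b}, mixed parity to \cite{KPdiffpar}) is precisely the intended justification. Your bookkeeping cautions (the direction of $N\mid M$ after relabeling, and the imported parameter $s$ in Case 3, which the paper indeed leaves defined only in \cite{KPdiffpar}) are apt but do not change the route.
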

The results in \cite{BDT18,BDT18b,KPdiffpar} were obtained using solutions to HWP$(C_g[u],M,N,$ $\alpha,\beta)$,
where $C_g[u]$ is the graph obtained from $C_g$ by replacing every vertex in the cycle with $u$ copies of it.
In other words, it is the graph with vertices of the form $x_{i,j}$, $0\leq i \leq g-1$, $0\leq j \leq u-1$,
and edges of the form $x_{i,a}x_{i+1,b}$ with addition done modulo $u$, and $0\leq a,b \leq u-1$.

In this paper, we make further progress when $M$ and $N$ are not coprime in two regards. On
the one hand, we improve the result for the case when $M$ and $N$ have different parities,
changing the condition that $4^k$ divides $v$ into the condition that $2^{k+2}$ divides
$v$. On the other hand, our results put no restrictions on $\alpha$ and $\beta$, covering
the difficult case when $M$ and $N$ have the same parity and $1\in \{\alpha,\beta\}$
that was previously left open. More precisely, our main result is the following.
\begin{theorem}\label{main}
Let $v$, $M$ and $N$ be integers greater than 3, and let $\ell=\mbox{lcm}(M,N)$.
A solution to $\mathrm{HWP}(v; M, N; \alpha, \beta)$ exists if and only if $M\mid v$ and $N\mid v$, except possibly when
\begin{itemize}
\item $\gcd(M,N)\in \{1,2\}$;
\item $4$ does not divide $v/\ell$;
\item $v/4\ell \in\{1,2\}$;
\item $v=16\ell$ and $\gcd(M,N)$ is odd;
\item $v=24\ell$ and $\gcd(M,N)=3$.
\end{itemize}
\end{theorem}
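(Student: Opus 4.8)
The forward implication is immediate: by Theorem~\ref{nec} any solution forces $K_v^\ast$ to be regular of the correct degree and forces $M$ and $N$ to divide $v$, so the content of the theorem is the sufficiency, which I would prove under the negation of the five listed exceptions. Throughout, write $d=\gcd(M,N)$ and $M=dm$, $N=dn$ with $\gcd(m,n)=1$, so that $\ell=dmn$; the excluded cases let me assume $d\geq 3$, that $4\mid v/\ell$, that $s:=v/(4\ell)\geq 3$, and that $(v,d)\notin\{(16\ell,\text{odd}),(24\ell,3)\}$. The plan follows the pipeline advertised in the abstract: build uniform and mixed $2$-factorizations of small Cayley graphs over generalized dihedral groups by means of row-sum matrices, tile $K_v^\ast$ by such Cayley graphs together with complete equipartite pieces, and add the factor counts to land on exactly $\alpha$ copies of a $C_M$-factor and $\beta$ copies of a $C_N$-factor.

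For the assembly I would use the decomposition of $K_v^\ast$ induced by a partition of the $v$ vertices into $v/\ell$ groups of size $\ell$: the edges inside the groups form copies of $K_\ell^\ast$, while the edges between groups form the complete equipartite graph $K_{v/\ell}[\ell]$. Since $M$ and $N$ both divide $\ell$, each within-group $K_\ell^\ast$ can absorb some number of $C_M$- and $C_N$-factors, and the between-group graph $K_{v/\ell}[\ell]$---which I would further slice into blown-up cycles $C_g[u]$ as in the introduction---absorbs the rest. Theorems~\ref{OP uniform} and~\ref{Liu} dispose of the purely uniform contributions; the genuinely new ingredient is a supply of \emph{mixed} factorizations in which the two cycle lengths, and in particular a single factor of one length, occur together. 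This is exactly the freedom the row-sum matrices must provide, and it is why the global counts $\alpha,\beta$ can be prescribed arbitrarily and why the bookkeeping must be tuned to the parity of $v$ (so that the removed $1$-factor $I$ is accounted for when $v$ is even).

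The technical heart is therefore the construction of row-sum matrices over $\mathrm{Dih}(A)=A\rtimes\Z_2$ for a suitable finite abelian group $A$, chosen so that $|\mathrm{Dih}(A)|$ equals the order of the target Cayley graph. Each row of the matrix should encode one $2$-factor, the entries being group elements whose partial products trace out the cycles, while the prescribed row-sum value pins the length of those cycles to $M$ or $N$ and the column conditions guarantee that distinct rows use disjoint edge sets and exhaust the connection set. Passing from the cyclic groups of earlier work to the \emph{non-abelian} generalized dihedral groups is the decisive move: the inversion involution contributes order-two elements and short orbits that make it possible to realize a factorization containing only one factor of a prescribed type and to combine cycle lengths of opposite parity---precisely the configurations ($1\in\{\alpha,\beta\}$, and $M\not\equiv N\pmod 2$) that block the cyclic constructions.

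The main obstacle I anticipate is establishing the existence of these row-sum matrices for \emph{every} admissible tuple of parameters, rather than for sporadic values. This is where the bulk of the work lies: one must exhibit explicit matrix families, verify the row-sum and column conditions in each residue class, and check that the resulting Cayley $2$-factors glue correctly to the equipartite pieces for all $s\geq 3$. I expect the surviving exceptional cases in the statement---the small cofactors $v/4\ell\in\{1,2\}$, the requirement $4\mid v/\ell$, and the isolated values $v=16\ell$ with $\gcd(M,N)$ odd and $v=24\ell$ with $\gcd(M,N)=3$---to arise precisely as the parameter regimes where either the dihedral group is too small to host the needed matrix or the parity and divisibility constraints on the row sums cannot be met simultaneously, so that the case analysis at the boundary is the most delicate and error-prone part of the argument.
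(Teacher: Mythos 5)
Your high-level pipeline (complete blocks plus an equipartite layer, the latter sliced into blown-up cycles handled by row-sum matrices over generalized dihedral groups) is indeed the paper's strategy, but the specific decomposition you propose breaks down at two concrete points. First, you ask each within-group $K_\ell^*$ to ``absorb some number of $C_M$- and $C_N$-factors'': a mixed factorization of $K_\ell^*$ is itself an instance of $\mathrm{HWP}(\ell;M,N;\cdot,\cdot)$, i.e.\ exactly the unsolved problem, so this step is circular. The paper avoids it by making the complete blocks carry factors of \emph{one} type only: it sets $(\alpha_0,\beta_0)=(2\ell\epsilon-1,0)$ or $(0,2\ell\epsilon-1)$ (feasible since $\alpha+\beta=2\ell s-1$ forces $\max\{\alpha,\beta\}\geq 2\ell\epsilon-1$), handles the blocks by the uniform Oberwolfach result (Theorem \ref{OP uniform}), and pushes \emph{all} the mixing into the equipartite layer, where Theorem \ref{C_g[u]} gives $\mathrm{HWP}(C_g[2^{k+2}mn];gm,2^kgn;\alpha_1,\beta_1)$ for every split with $\alpha_1+\beta_1=2^{k+2}mn$. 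Without some analogue of this counting reduction, your ``bookkeeping'' claim that arbitrary $(\alpha,\beta)$ can be prescribed is unsupported. Second, your parts have size $\ell$, which does not match the machinery: writing $M=gm$, $N=2^kgn$ with $m,n$ odd and $g=\gcd(M,N)$, the row-sum matrices live over $\Gamma\simeq Dih(\Z_m\times\Z_{2^{k+1}n})$ of order $2^{k+2}mn=4\ell/g$, so the blown-up cycles must be $C_g[4\ell/g]$ and the vertex classes must have size $4\ell\epsilon$, not $\ell$. This is precisely where the hypothesis $4\mid v/\ell$ is consumed; with classes of size $\ell$ the slicing of $K_{v/\ell}[\ell]$ into $C_g[4\ell/g]$-factors does not come out.

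A related inaccuracy is your diagnosis of the exceptional cases: they do not arise because ``the dihedral group is too small,'' but from the equipartite slicing step. The paper obtains the $C_g[4\ell/g]$-factorization of $K_t[4\ell\epsilon]$ from a $C_g$-factorization of $K_t[g\epsilon]$ via Theorem \ref{Liu}, which requires $t\geq 3$ components and $(t-1)g\epsilon$ even; this forces the device $(t,\epsilon)=(s/2,2)$ for even $s\geq 6$, rules out $s\in\{1,2\}$ (your $v/4\ell\in\{1,2\}$), forces $g$ even when $s=4$ (the $v=16\ell$, $\gcd$ odd exception), and trips over Liu's sporadic exception $(\ell,t,z)=(3,3,6)$ when $(g,s)=(3,6)$ (the $v=24\ell$, $\gcd(M,N)=3$ exception). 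Your forward implication via Theorem \ref{nec} and your reading of the role of the non-abelian groups (realizing a single factor of one type and mixed parities, i.e.\ Theorem \ref{row-sum exists}) are correct, but as written the sufficiency argument has the two gaps above.
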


In Section \ref{sec:prelim} we introduce the concept of row-sum matrices and prove some preliminary results.
In particular, we show
how to use said matrices to obtain solutions to HWP$(C_g[u],M,N,\alpha,\beta)$. Next, in Section \ref{RSM_dihedral}
we prove the existence of the matrices we need for our main result. Finally, in Section \ref{sec:main}
we complete the proof of Theorem \ref{main}.

\section{Preliminary results}\label{sec:prelim}

Recall that given a group $\Gamma$, and an integer $v$,
a $v$-list of $\Gamma$ is a list $\Delta = [\delta_1, \ldots, \delta_v]$
of $v$ (not necessarily distinct) elements of $\Gamma$.
Given an integer $g$, set $g\Delta =  [g\delta_1, \ldots, g\delta_v]$. 
It will be helpful to refer to the list $\omega(\Delta)$ of element orders associated to $\Delta$, defined as follows:
$\omega(\Delta) = [\omega(\delta_1), \ldots, \omega(\delta_v)]$, 
where each $\omega(\delta_i)$ is the order of the element $\delta_i$ of the group $\Gamma$, and hence a divisor of the order of $\Gamma$.

\subsection{$\Delta$-Permutations}
Let $\Gamma$ be an arbitrary group of order $v$, 
and let $\Delta$ be a $v$-list of $\Gamma$.
We say that a permutation $\varphi$ of $\Gamma$ is a $\Delta$-permutation if the following condition holds:
\begin{equation}\label{deltapermutation}
\left[\varphi(a_1)-a_1, \varphi(a_2)-a_2, \ldots, \varphi(a_v)-a_v\right] = \Delta,
\end{equation}
where $\{a_1,a_2,\ldots, a_v\}=\Gamma$.

\begin{rem}\label{rem:deltapermutation}
Given any fixed $x\in \Gamma$ and $\delta\in\Delta$, we can assume that $\varphi(x)-x=\delta$. Otherwise,
take $a_j\in \Gamma$ such that $\varphi(a_j)-a_j=\delta$, 
and set $b_i=a_i-a_j+x$ and $\phi(b_i) = \varphi(a_i)-a_j+x$, for every $i=1,\ldots,v$.
Clearly, $\Gamma=\{b_1, b_2, \ldots, b_v\}$ and $\phi$ is a $\Delta$-permutation of $\Gamma$, since 
$\phi(b_i) - b_i = \varphi(a_i)-a_i$ for each $i=1,\ldots, v$.
Also, $b_j=x$ and $\phi(x)-x=\varphi(a_j)-a_j=\delta$.
\end{rem}

Given an arbitrary $v$-list $\Delta$ of an abelian group $G$ of order $v$
a necessary condition for a $\Delta$-permutation of $G$ to exist is that
$\sum\Delta=0$, where $\sum\Delta$ denotes the sum of the elements in $\Delta$.
M. Hall \cite{Hall} proved that this condition is also sufficient.

\begin{theorem}[\cite{Hall}]\label{teohall}
Let $G$ be an abelian group of order $v$, and let $\Delta$ be a $v$-list of $G$. 
There exists a $\Delta$-permutation of $G$ if and only if
$\sum \Delta=0$. 
\end{theorem}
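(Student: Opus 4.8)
The plan is to prove necessity directly and then attack sufficiency by recasting the existence of a $\Delta$-permutation as a matching/tiling problem. For necessity, suppose $\varphi$ is a $\Delta$-permutation of $G$ with $\{a_1,\dots,a_v\}=G$. Summing the defining displacements and using that $\varphi$ is a bijection gives $\sum\Delta=\sum_{i}(\varphi(a_i)-a_i)=\sum_{i}\varphi(a_i)-\sum_i a_i=0$, since both $\{\varphi(a_i)\}$ and $\{a_i\}$ run over all of $G$. Thus $\sum\Delta=0$ is forced, and the content of the theorem is the converse.

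For sufficiency, I would write $m_c$ for the multiplicity of $c\in G$ in $\Delta$, so that $\sum_{c}m_c=v$ and $\sum_c m_c\,c=0$, and reformulate the goal as a tiling statement: find a partition $G=\biguplus_{c\in G}S_c$ with $|S_c|=m_c$ for every $c$, such that the translates $\{S_c+c : c\in G\}$ \emph{also} partition $G$. Indeed, given such a partition, setting $\varphi(x)=x+c$ for $x\in S_c$ yields a well-defined map whose image is $\biguplus_c(S_c+c)=G$, hence a bijection, and whose displacement list is exactly $\Delta$. Equivalently, viewing the permutation as a perfect matching of the complete bipartite graph $K_{G,G}$ whose edges are coloured by their displacement $y-x$ (each colour class being the perfect matching $x\mapsto x+c$), the task is to select a perfect matching that uses precisely $m_c$ edges of colour $c$.

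To produce the required partition I would proceed by induction, applying Hall's Marriage Theorem at the inductive step to reassign displacements, with translation $x\mapsto x+c$ serving as the base case when $\Delta=[^{v}c]$ is constant (here $\sum\Delta=vc=0$ holds automatically by Lagrange). The main obstacle is precisely the verification of the marriage (system-of-distinct-representatives) condition: one must rule out any ``blocking'' set of domain elements whose admissible images are too few, and this is exactly the point at which the hypothesis $\sum_c m_c c=0$ must enter, since a violating configuration would force the relevant partial displacement sums to be nonzero and contradict the zero-sum condition. An alternative route to the same end is a switching argument on the matching: while $\Delta$ is non-constant, pick two distinct values $x,y$ occurring in the list and replace one copy of each by $0$ and $x+y$ (this preserves $\sum\Delta$ and strictly increases the number of zeros, so the process terminates at the constant list $[^{v}0]$); the delicate part there is the reverse step, lifting a realization of the modified list back to one of $\Delta$ by rerouting $\varphi$ along a suitable alternating path. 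Either way the crux is the same combinatorial core, and $\sum\Delta=0$ is the exact feature that makes it go through.
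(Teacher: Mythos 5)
Your necessity argument is complete and correct, but note that the paper itself does not prove this theorem at all: it is quoted verbatim from M.~Hall's 1952 paper \cite{Hall}, so your write-up must stand or fall on its own, and the sufficiency half is where all of the content lies. As written, it is a plan rather than a proof, and you concede as much twice (``the main obstacle is precisely the verification of the marriage condition''; ``the delicate part there is the reverse step''). Both of your proposed routes halt exactly at the step that constitutes the theorem.

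Concretely: (i) the exact-colour matching formulation is \emph{not} an instance of Hall's Marriage Theorem. Each colour class $\{(x,x+c):x\in G\}$ is itself a perfect matching of $K_{G,G}$, so perfect matchings are plentiful; the difficulty is entirely in the side constraint that exactly $m_c$ edges of colour $c$ be used. You never exhibit a set system whose system-of-distinct-representatives condition encodes this constraint, and no obvious one does --- exact colour-multiplicity constraints on matchings are of matroid-intersection type, not SDR type, and a partial matching realizing a sub-multiset of $\Delta$ need not extend, so the unspecified induction can strand you. (ii) In the switching route, the forward reduction (replacing $x,y$ by $0$ and $x+y$) terminates trivially at a constant list, but the implication you need runs backwards: from a permutation realizing $[\,^{1}0,\,^{1}(x{+}y)\,]\cup\Delta'$ you must produce one realizing $[\,^{1}x,\,^{1}y\,]\cup\Delta'$. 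Rerouting $\varphi$ along an alternating path alters the displacement of \emph{every} vertex on the path, so preserving the remainder of the multiset is precisely what must be argued, and it is there --- not in a routine bookkeeping check --- that $\sum\Delta=0$ must enter; this single splitting step is essentially equivalent to the theorem itself. It is also telling that your sketch never uses commutativity except to write sums, whereas Hall's actual argument is an induction leaning on the structure of finite abelian groups that occupies his entire note. In short: you have proved the easy direction and reformulated, but not proved, the hard one.
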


The following special $\Delta$-permutations will be useful in the constructions of Section 4.

\begin{theorem}\label{special_1}
Let $m$ and $n$ be positive integers with $m$ odd. Then
there exists a $\Delta$-permutation $\psi$ of $\Z_m\times\Z_{2n}$ such that
\begin{enumerate}
\item $\Delta=
\left[ \,^1(0,0)\right] \, \cup \, 
\left[^1\gamma\mid \gamma\in \Z_m\times\Z_{2n}, \gamma\neq (0,n)\right],$
\item $\psi$ fixes $(0,0)$ and 
$\left(-\frac{m-1}{2}, \left\lfloor\frac{n+1}{2}\right\rfloor + \frac{m-1}{2}n\right)$.
\end{enumerate}
\end{theorem}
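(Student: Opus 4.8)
The plan is to build $\psi$ as an \emph{involution} whose only fixed points are the two prescribed ones, so that its nontrivial part is a product of disjoint transpositions. The point is that a transposition $(a\ b)$ contributes the two differences $b-a$ and $a-b=-(b-a)$ to the list in \eqref{deltapermutation}, while a fixed point contributes $(0,0)$. Set $G=\Z_m\times\Z_{2n}$. Since $m$ is odd, the $2$-torsion of $G$ is $\{(0,0),(0,n)\}$, so $(0,n)$ is the unique involution and $\sum_{g\in G}g=(0,n)$; consequently $\sum\Delta=\sum_{g\in G}g-(0,n)+(0,0)=(0,0)$, matching the necessary condition of Theorem \ref{teohall}. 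The nonzero part of $\Delta$ is exactly one copy of each $\gamma\notin\{(0,0),(0,n)\}$, and these $2mn-2$ elements split into the $mn-1$ antipodal pairs $\{\gamma,-\gamma\}$. Hence it suffices to partition $G\setminus\{(0,0),p\}$ into $mn-1$ unordered pairs whose difference sets $\{b-a,a-b\}$ run through these antipodal pairs exactly once: the two fixed points then supply the two copies of $(0,0)$, while the involution $(0,n)$ is never realized since we use no pair $\{a,a+(0,n)\}$.

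This reduces the lemma to the construction of a \emph{starter-type} pairing, and the obstruction that shapes the whole proof is that $G$ has no complete mapping (equivalently $\sum_{g\in G}g\neq0$): one cannot realize \emph{every} nonzero difference, and the unavoidable defect must be absorbed by the involution $(0,n)$, which is precisely why $\Delta$ trades one $(0,n)$ for an extra $(0,0)$. In particular a naive product construction fails, since pairing column $i\in\Z_m$ only with column $-i$ would force a complete mapping of the even group $\Z_{2n}$ in the second coordinate.

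First I would isolate $(0,0)$ as the permanent fixed point and treat the second coordinate: viewing $\{0\}\times\Z_{2n}$ as $\Z_{2n}$, I would give an explicit pairing of $\Z_{2n}\setminus\{0,k\}$, with $k=\lfloor\frac{n+1}{2}\rfloor$, into $n-1$ pairs realizing each unsigned difference $1,2,\dots,n-1$ exactly once (a cyclic Skolem/Langford-type arrangement, with a short split according to the parity of $n$). These realize all within-column antipodal pairs $\{(0,s),(0,-s)\}$, $1\le s\le n-1$, and leave the single travelling leftover $(0,k)$. I would then adjoin the remaining columns two at a time, in $\frac{m-1}{2}$ steps patterned on the starter $\{i,-i\}$ of $\Z_m$; each step pairs up the $4n$ new elements together with the current leftover so as to realize, exactly once, all the cross-column antipodal pairs it is responsible for, and to move the leftover by $(-1,n)$. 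After $\frac{m-1}{2}$ steps the leftover sits at $(0,k)+\frac{m-1}{2}(-1,n)=\bigl(-\frac{m-1}{2},\lfloor\frac{n+1}{2}\rfloor+\frac{m-1}{2}n\bigr)=p$, as required.

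The hard part will be the explicit column-adjoining step: arranging the new pairs so that they are mutually disjoint, cover all $4n$ new elements apart from the relocated leftover, hit each cross-column antipodal difference exactly once, and advance the leftover by exactly $(-1,n)$ --- all uniformly in $m$ and $n$ and reconciling the parity cases coming from the base $\Z_{2n}$ pairing. Once this pairing is written down, checking that the induced involution $\psi$ realizes $\Delta$ and fixes exactly $(0,0)$ and $p$ is a direct, if lengthy, verification.
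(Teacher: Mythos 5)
Your reduction is precisely the strategy of the paper's own proof: there, too, $\psi$ is an involution arising from a matching $H$ on $\Z_m\times\Z_{2n}$ with $mn-1$ edges missing exactly $(0,0)$ and $p=\left(-\frac{m-1}{2},\left\lfloor\frac{n+1}{2}\right\rfloor+\frac{m-1}{2}n\right)$, whose edge-differences hit each antipodal pair $\{\gamma,-\gamma\}$ with $\gamma\neq(0,0),(0,n)$ exactly once; your base pairing of $\Z_{2n}\setminus\{0,\lfloor(n+1)/2\rfloor\}$ realizing each unsigned difference $1,\dots,n-1$ once is the paper's matching $F$, and your leftover migrating by $(-1,n)$ across $\frac{m-1}{2}$ column-adjoining steps is exactly how the paper's construction behaves. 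However, as submitted your argument has a genuine gap, which you yourself flag: the column-adjoining step is only \emph{specified} (pair up the $4n$ new elements together with the current leftover, realize each relevant cross-column antipodal class once, advance the leftover by $(-1,n)$), never constructed. That specification is the entire technical content of the theorem --- everything before it is correct but routine bookkeeping (the $2$-torsion computation, $\sum\Delta=(0,0)$, the count of $mn-1$ antipodal pairs) --- so the proposal stops one decisive step short of a proof.

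For comparison, the paper fills the step in as follows. At step $i$ it places into columns $\pm i$ the shifted lift $F(i)+(0,in)$, where $F(i)$ contains, for each base edge $\{y_1,y_2\}\in E(F)$, the two edges $\{(x,y_1),(-x,y_2)\}$ with $x=\pm i$; this realizes all classes $(2i,e)$ with $e\neq 0,n$ and leaves four holes $(\pm i,in)$ and $(\pm i,u+in)$, with $u=\lfloor\frac{n+1}{2}\rfloor$. An edge $\{(i,in),-(i,in)\}$ (which equals $\{(i,in),(-i,in)\}$ because $2in\equiv 0\pmod{2n}$) supplies the class $(2i,0)$, and an edge joining the previous leftover $\left(-(i-1),u+(i-1)n\right)$ to $(i,u+in)$ supplies the class $(2i-1,n)$ (using $-n\equiv n\pmod{2n}$), leaving $(-i,u+in)$ as the new leftover --- i.e., the leftover advances by $(-1,n)$ exactly as you require. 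A class count confirms the bookkeeping: per step, $2(n-1)$ classes $(2i,e)$ plus $(2i,0)$ plus $(2i-1,n)$, and over all steps the residues $\pm 2i$ and $\pm(2i-1)$ exhaust the nonzero first coordinates, giving all $(m-1)n$ cross-column classes. So your plan is sound and your specification is satisfiable, but until such a pairing is written down and verified (including the parity split of $F$ and the alignment of the holes under the shift $(0,in)$), the proof is incomplete.
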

\begin{proof}  Assume that $V(K_{2mn}) = \Z_m\times\Z_{2n}$.
We are going to construct a suitable matching $H$ of $K_{2mn}$ with $mn-1$ edges.
We leave to the reader the check that the permutation that swaps every pair of adjacent vertices in $H$ and fixes the remaining two is the desired $\Delta$-permutation of $\Z_m\times\Z_{2n}$.

We consider the following matching of $K_{2n}$,
\[\textstyle{
  F=\left\{   
          \{j,-j\} \,\Big|\, j = 1, \ldots, \left\lfloor \frac{n-1}{2}\right\rfloor
      \right\}
      \,\cup\,
      \left\{ 
          \{j,-j+1\} \,\Big|\, j = \left\lfloor \frac{n+3}{2}\right\rfloor, \ldots, n
      \right\},
      }
\]
and note that $V(F) = \Z_{2n}\setminus\{0, u\}$ with 
$u=\left\lfloor\frac{n+1}{2}\right\rfloor$. 

For every non-negative integer $i$, we lift $F$ to a matching $F(i)$ with vertex-set 
$\{\pm i\}\times (\Z_{2n}\setminus\{0,u\})$ defined as follows:
\[
  F(i)=\big\{\{(x, y_1), (-x, y_2)\} \,\big|\, x=\pm i, 
  \{y_1, y_2\}\in E(F)\big\}.
\]
Consider also the following two matchings of $K_{2mn}$:
\begin{align*}
  F' &= 
\textstyle{
       \left\{
         \{(i,in), -(i,in)\} \,\big|\, i=1,\ldots, \frac{m-1}{2} 
       \right\},
}  
\\
F'' &= 
\textstyle{
       \left\{
         \{(-i+1,u+in+n), (i,u+in)\} \,\big|\, i=1,\ldots, \frac{m-1}{2} 
       \right\},  
}            
\end{align*}
and set
\[ H = \bigcup_{i=0}^{(m-1)/2} \big(F(i) + (0,in)\big) \,\cup\, F' \,\cup\, F''.
\]
It is not difficult to check that $H$ is a matching
of $K_{2mn}$ with $2mn-1$ edges, missing the vertices $(0,0)$
and $\left(-\frac{m-1}{2}, u+\frac{m-1}{2}n\right)$.
\end{proof}

\begin{theorem}\label{special_2}
Let $m\geq 1$ and $n\geq 3$ be odd integers. Then
there exists a $\Delta$-permutation $\psi$ of $\Z_m\times\Z_{2n}$ such that
\begin{enumerate}
  \item $
        \Delta=
          \left[ 
           \,^{2mn-6} (1,0), \,^{3} (2,0), \,^{1} (0,2),  \,^{1} (0,n-2), \,^{1} (0,n)
         \right] 
        $,        
  \item $\psi(0,0)=(0,n), \psi(0,n) = (0,n+2)$.
\end{enumerate}  
\end{theorem}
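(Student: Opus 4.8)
The plan is to realize $\psi$ as a controlled perturbation of the column-shift permutation $\sigma\colon(i,j)\mapsto(i+1,j)$ on $\Z_m\times\Z_{2n}$, a disjoint product of $2n$ $m$-cycles (one per column $C_j=\Z_m\times\{j\}$) whose difference list is $[\,^{2mn}(1,0)]$. Since $\sum\Delta=(2mn,2n)=(0,0)$ in $\Z_m\times\Z_{2n}$, Hall's Theorem (Theorem~\ref{teohall}) already yields \emph{some} $\Delta$-permutation, but it gives no control over the two prescribed values, and here $\psi$ is not an involution, so the matching device of Theorem~\ref{special_1} does not apply directly; an explicit construction is needed. The guiding observation is $n+2+(n-2)\equiv 0\pmod{2n}$: the three ``vertical'' differences $(0,n),(0,2),(0,n-2)$ can be produced by the single $3$-cycle $(0,0)\to(0,n)\to(0,n+2)\to(0,0)$ living in row $i=0$ across the three (distinct, as $n\ge 3$) columns $0,n,n+2$. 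This $3$-cycle already forces $\psi(0,0)=(0,n)$ and $\psi(0,n)=(0,n+2)$, and the closing edge $(0,n+2)\to(0,0)$ contributes the difference $(0,n-2)$.

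With this $3$-cycle in place, $\psi$ preserves the block $C_0\cup C_n\cup C_{n+2}$; on every other column I would keep $\psi=\sigma$, contributing $(2n-3)m$ differences equal to $(1,0)$. It then remains to define $\psi$ on the $m-1$ non-row-$0$ points of each special column. In each $C_j$ with $j\in\{0,n,n+2\}$ the point $(0,j)$ is simultaneously the source and the target of a cross-edge, so under the identification $C_j\cong\Z_m$ I must supply a bijection of $\Z_m\setminus\{0\}$ to itself sending each element to $i+1$ or $i+2$; taking $i\mapsto i+1$ for $1\le i\le m-2$ and $m-1\mapsto 1$ does this at the cost of exactly one difference $(2,0)$ per column. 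Summed over the three special columns this yields precisely the three copies of $(2,0)$ demanded by $\Delta$.

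The verification then reduces to three routine checks: that in each special column the $m-1$ in-column images together with the single cross-edge image cover $C_j$ exactly once (so $\psi$ is a bijection), that the two prescribed images hold (immediate from the $3$-cycle), and that the difference multiset is correct, the count of $(1,0)$'s being $(2n-3)m+3(m-2)=2mn-6$ as required. I expect the only real obstacle to be arranging the three cross-edges so that each detaches the \emph{same} point $(0,j)$ as both source and target in its column; this symmetry is exactly what makes all three in-column problems identical and solvable with a budget of one $(2,0)$ each, rather than leaving an over- or under-count. Finally I would note that the degenerate case $m=1$ is subsumed: the in-column maps are then empty and $\psi$ is exactly the $3$-cycle $(0,n,n+2)$ of $\Z_{2n}$ with all other points fixed, realizing $\Delta=[\,^{2n-3}(0,0),(0,2),(0,n-2),(0,n)]$.
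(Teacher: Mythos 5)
Your construction is correct and is essentially identical to the paper's own proof: the permutation you describe---the $3$-cycle $(0,0)\to(0,n)\to(0,n+2)\to(0,0)$ combined with the shift $z\mapsto z+(1,0)$ everywhere else, except $z\mapsto z+(2,0)$ at the three points $(-1,j)$, $j\in\{0,n,n+2\}$---is exactly the $\psi$ the paper defines. The only difference is presentational: the paper states the map and leaves the checks to the reader, while you spell out the bijectivity of the in-column maps on $\Z_m\setminus\{0\}$ and the count $(2n-3)m+3(m-2)=2mn-6$.
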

\begin{proof} Let $\psi$ be the permutation of $\Z_m\times\Z_{2n}$ defined as follows
\[ 
 \psi(0,0)=(0,n),\;\;\; \psi(0,n) = (0,n+2), \;\;\; \psi(0,n+2) = (0,0), 
\]
and for every $z\in \Z_m\times\Z_{2n} \setminus\{(0,0), (0,n), (0,n+2)\}$, let
\[
  \psi(z)=
  \begin{cases}
    z+(2,0) & \text{if $z\in \{(-1,0), (-1,n), (-1,n+2)\}$} ,\\
    z+(1,0) & \text{otherwise}.
  \end{cases}
\]
One can check that $\psi$ is the desired permutation.
\end{proof}

\subsection{Row-sum matrices and $2$-factorizations of $C_g[n]$}

Let $\Gamma$ be a group, and let $S \subset \Gamma$. Also, let $\Sigma$ be an $|S|$-list of elements of $\Gamma$.
A \emph{row-sum matrix} $RSM_\Gamma(S, g; \Sigma)$ is an
$|S|\times g$ matrix, whose $g\geq 2$ columns are permutations of $S$ and such that the list of (left-to-right) row-sums is $\Sigma$. We write $RSM_\Gamma(S, g; \omega(\Sigma))$ whenever we are just interested in the list $\omega(\Sigma)$ of orders of the row-sums.
Notice that an $RSM_\Gamma(\Gamma, 2; \Sigma)$ is equivalent to a $\Sigma$-permutation of $\Gamma$.

Row-sum matrices are useful to build factorizations of suitable Cayley subgraphs of $C_g[n]$. More precisely, we denote by
$C_g[\Gamma, S]$ ($g\geq 3$) the graph with point set 
$\mathbb{Z}_g\times \Gamma$ and edges $(i,x)(i+1,d+x)$, $i\in \mathbb{Z}_g$, $x\in\Gamma$ and $d\in S$. In other words, 
$C_g[\Gamma,S]= {\rm cay}(\Z_g\times \Gamma, \{1\}\times S)$; hence, it is $2|S|$-regular.
It is straightforward to see that if $\Gamma$ has order $n$, then $C_{g}[n] \cong C_{g}[\Gamma,\Gamma]$;  
hence, $C_{g}[\Gamma,S]$ is a subgraph of $C_g[n]$.

The following result, proven in \cite[Theorem 2.1]{{BDT19}} when $\Gamma$ has odd order, 
shows that row-sum matrices can be used to build factorizations of $C_g[\Gamma, S]$.

\begin{theorem}\label{row-sum matrix to factorizations}
If there exists an $RSM_\Gamma(S,g;\Sigma)$, then
$\mathrm{GOP}(C_{g}[\Gamma, S]; g\omega(\Sigma))$ has a solution.
\end{theorem}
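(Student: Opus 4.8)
The plan is to read each of the $|S|$ rows of the given $RSM_\Gamma(S,g;\Sigma)$, with entries written $A=(a_{j,i})$ for $j\in\{1,\dots,|S|\}$ and $i\in\Z_g$, as the blueprint for a single uniform $2$-factor of $C_g[\Gamma,S]$, and then to show that these $|S|$ factors together solve $\mathrm{GOP}(C_g[\Gamma,S];g\omega(\Sigma))$. For a fixed row $j$ I would define
\[
F_j=\big\{\, (i,x)(i+1,\,a_{j,i}+x)\;\big|\; i\in\Z_g,\ x\in\Gamma \,\big\},
\]
the set of all $\Gamma$-translates of the edge prescribed between columns $i$ and $i+1$ by the entry $a_{j,i}$. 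Since every entry lies in $S$, each such edge belongs to $C_g[\Gamma,S]$, so $F_j$ is a subgraph. Three things must then be checked: that each $F_j$ is a $2$-factor, that the $F_j$ partition $E(C_g[\Gamma,S])$, and that every cycle of $F_j$ has length $g\,\omega(\sigma_j)$, where $\sigma_j$ is the $j$-th row-sum.

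The structural claims are the routine part. At a vertex $(i,x)$ the factor $F_j$ contains exactly the forward edge to $(i+1,a_{j,i}+x)$ and the backward edge from $(i-1,(-a_{j,i-1})+x)$, so $F_j$ is spanning and $2$-regular, hence a $2$-factor. For the partition I would fix consecutive columns $i,i+1$ and observe that the edges of $C_g[\Gamma,S]$ joining them are precisely $(i,x)(i+1,d+x)$ with $x\in\Gamma$ and $d\in S$; in $F_j$ the value of $d$ used is $a_{j,i}$, and because the $i$-th column of $A$ is a permutation of $S$, as $j$ runs over $\{1,\dots,|S|\}$ the entry $a_{j,i}$ takes each element of $S$ exactly once. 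Thus the edges between columns $i$ and $i+1$ are partitioned by the $F_j$, and since this holds for every $i$, the $|S|$ factors partition $E(C_g[\Gamma,S])$.

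The heart of the argument is the cycle length. Following the cycle of $F_j$ through $(0,x)$ and advancing one column at a time, the $\Gamma$-coordinate is updated by left-addition of the successive entries $a_{j,0},a_{j,1},\dots,a_{j,g-1}$, so after one full loop of $g$ columns it returns to column $0$ translated by the row-sum $\sigma_j$ (read in the order dictated by the edge orientation of $C_g[\Gamma,S]$). Hence one loop acts on the first coordinate as translation by $\sigma_j$, and the cycle closes exactly when the $k$-fold sum $\sigma_j+\cdots+\sigma_j$ vanishes, namely when $k=\omega(\sigma_j)$. Since consecutive vertices of any cycle lie in consecutive columns, every cycle of $F_j$ meets column $0$, and so has length $g\,\omega(\sigma_j)\geq g$; thus $F_j$ is a uniform $C_{g\omega(\sigma_j)}$-factor. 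Collecting $F_1,\dots,F_{|S|}$ then produces exactly the multiset $g\omega(\Sigma)$ of cycle lengths, completing the solution.

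The main obstacle is the non-commutativity of $\Gamma$, which is precisely what separates this statement from the cyclic case of \cite[Theorem 2.1]{BDT19}. When $\Gamma$ is abelian the per-loop shift is unambiguously $\sigma_j$, but for general $\Gamma$ the traversal produces an \emph{ordered} product of the row entries, and the order of summation matters: the left-to-right and right-to-left sums of a row need not even have the same order in $\Gamma$. The delicate point is therefore to align the edge-orientation convention of $C_g[\Gamma,S]$ with the reading convention for $\Sigma$ so that the group element governing the cycle length is genuinely the $\sigma_j$ recorded in $\Sigma$ (equivalently, that $\omega$ of the traversal shift equals $\omega(\sigma_j)$). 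Once this bookkeeping is fixed, the remaining verifications are direct, and the uniformity of each factor is immediate because the shift $\sigma_j$ is independent of the starting vertex $x$.
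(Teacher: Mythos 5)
Your construction is exactly the one behind the paper's proof, which is not given in-line: the paper cites \cite[Theorem 2.1]{BDT19} for odd $|\Gamma|$ and declares the even case ``identical''. Like that proof, you turn each row $j$ into the translate factor $F_j=\{(i,x)(i+1,a_{j,i}+x)\}$, get $2$-regularity from the one forward and one backward edge at each vertex, and get the edge partition from the fact that each column is a permutation of $S$. Those two verifications are correct as you wrote them.

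The gap is precisely the step you flag as delicate and then do not close. With the paper's edge convention $(i,x)(i+1,d+x)$, the forward traversal accumulates on the \emph{left} in \emph{reverse} column order: $x_1=a_{j,0}+x$, $x_2=a_{j,1}+a_{j,0}+x$, and after one loop $x_g=a_{j,g-1}+\cdots+a_{j,1}+a_{j,0}+x$. So the per-loop shift is left-addition by the right-to-left sum $r_j$, not by the left-to-right sum $\sigma_j$ recorded in $\Sigma$, and your argument as written proves that $F_j$ is a $C_{g\,\omega(r_j)}$-factor. This is not a cosmetic distinction: even in the paper's own class of groups the two orders can differ. In $S_3\simeq Dih(\Z_3)$, written additively, the row $\big((12),(13),(123)\big)$ has one reading order summing to a $3$-cycle (order $3$) and the other summing to the identity (order $1$); so ``once this bookkeeping is fixed'' is doing genuine work that your proposal never does. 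The fix is one line and should be stated explicitly: reversing the column order of an $RSM_\Gamma(S,g;\Sigma)$ again gives a matrix whose columns are permutations of $S$ and whose right-to-left row sums are the original left-to-right sums; equivalently, define $F_j=\big\{(i,x)(i+1,\,a_{j,g-1-i}+x)\;\big|\; i\in\Z_g,\ x\in\Gamma\big\}$, so that the per-loop shift becomes $a_{j,0}+a_{j,1}+\cdots+a_{j,g-1}=\sigma_j$ and every cycle of $F_j$ has length exactly $g\,\omega(\sigma_j)$. With that sentence added, your proof is complete---and it makes explicit a point the paper itself glosses over in asserting that the nonabelian case is identical to the case treated in \cite{BDT19}, where the two reading orders cause no discrepancy.
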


We skip the proof of Theorem \ref{row-sum matrix to factorizations} when $|\Gamma|$ is even, as it is identical to the odd case.

We now show that row-sum matrices can be easily extended over the columns
whenever $S$ is closed under taking negatives (that is, $S=-S$) or when 
$S=\Gamma$ has a \emph{complete mapping}. We recall that a 
complete mapping of $\Gamma$ is a permutation $\pi$ 
of $\Gamma$ such that $\rho(x)=x+\pi(x)$ is also a permutation.

\begin{theorem}\label{row-sum matrix extension}
If there is an $RSM_\Gamma(S,g;\Sigma)$, then there exists
$RSM_\Gamma(S,g+i;\Sigma)$ in each of the following cases:
\begin{enumerate}
  \item $S=-S$ and $i\geq2$ is even, or
  \item $S=\Gamma$ has a complete mapping and $i\geq1$.
\end{enumerate}
\end{theorem}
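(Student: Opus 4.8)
The plan is to prove both cases by adjoining columns in such a way that the list of row-sums is preserved not merely as a multiset but literally \emph{row by row}. The whole delicacy is that $\Gamma$ need not be abelian, so the left-to-right order in which the entries of a row are summed matters; I will therefore always insert or split columns in a position where the compensating cancellation (Case 1) or refactorisation (Case 2) is absorbed locally, so that none of the surviving summands has to be reordered. Since $g\geq 2$, the given matrix certainly has a first column to work with, and after any extension the number of columns is $g+i\geq 2$, so the output is again a legitimate row-sum matrix.

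For Case 1, suppose $S=-S$. I fix any ordering of $S$ and regard it as a single column $P$; then the entrywise-negated column $-P$ has entries $\{-s : s\in S\}=-S=S$, so $-P$ is again a column of an $RSM_\Gamma(S,\cdot\,;\cdot)$. Prepending the block $P,\,-P$ to the given matrix changes the $r$-th row-sum from $\sigma_r$ to $P[r]+(-P[r])+\sigma_r=0+\sigma_r=\sigma_r$, since $P[r]+(-P[r])$ is the identity and sits at the extreme left. This yields an $RSM_\Gamma(S,g+2;\Sigma)$, and repeating the insertion $i/2$ times gives $RSM_\Gamma(S,g+i;\Sigma)$ for every even $i\geq 2$.

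For Case 2, suppose $S=\Gamma$ has a complete mapping $\pi$, so $\rho(x)=x+\pi(x)$ is a permutation of $\Gamma$; here every column is a permutation of $\Gamma$. The idea is to replace one column by two whose left-to-right row-wise sum reproduces it entry by entry, thereby raising the column count by one without disturbing any row-sum. Take the first column $C$ and, for each row $r$, set $w_r=\rho^{-1}(C[r])$; replace $C$ by the two columns $W=(w_r)_r$ and $W'=(\pi(w_r))_r$, read left to right. Their row-wise sum is $w_r+\pi(w_r)=\rho(w_r)=C[r]$, so that row-sum is unchanged; moreover $W$ is a permutation of $\Gamma$ because $C$ is and $\rho^{-1}$ is a bijection, and $W'$ is a permutation because $\pi$ and $\rho^{-1}$ are bijections. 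This produces an $RSM_\Gamma(\Gamma,g+1;\Sigma)$, and applying the same splitting repeatedly, to any current column, gives $RSM_\Gamma(\Gamma,g+i;\Sigma)$ for all $i\geq 1$.

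The step I expect to carry the real content is the column-splitting of Case 2: the defining property of a complete mapping is precisely what guarantees that every $z\in\Gamma$ can be written as $z=w+\pi(w)$ with $w=\rho^{-1}(z)$, so that the two halves $w$ and $\pi(w)$ each sweep out all of $\Gamma$ exactly as $z$ does. Case 1 is essentially free once one notices that the pair $P,-P$ contributes the identity to every row; the reason the same trick cannot produce odd $i$ is that a single column, being a permutation of $S$, cannot contribute the identity to every row. The only recurring subtlety is non-commutativity, and it is neutralised throughout by performing the cancellation or the refactorisation \emph{in place}.
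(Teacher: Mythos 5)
Your proof is correct and takes essentially the same approach as the paper: for Case 1 the paper likewise extends the matrix by $i/2$ copies of a two-column block whose rows are $(s,-s)$ and hence sum to $0$, and for Case 2 it performs exactly your splitting, replacing each entry $y$ of one column (the last, rather than your first) by the adjacent pair $\bigl(\rho^{-1}(y),\pi(\rho^{-1}(y))\bigr)$. The positional choices (prepend versus append, first versus last column) are immaterial since, as you note, the inserted pair is absorbed in place even when $\Gamma$ is non-abelian.
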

\begin{proof} 
  Let $A$ be an $RSM_\Gamma(S,g;\Sigma)$.
  We first assume that $S=-S$ and $i\geq2$ is even. 
  The existence of an $RSM_\Gamma(S,g+i;\Sigma)$ was essentially proven in \cite[Theorem 2.1]{{BDT19}} as follows: it is enough to extend $A$ by adding $i/2$ copies of an $RSM_\Gamma(S,2; \Sigma')$, say $B$, where
  $\Sigma'$ is the $|S|$-list of zeros (i.e., each row of $B$ sums to $0$). Clearly, $B$ can be easily built by choosing its second column to be the negative 
  of the first one, which must be a permutation of $S$ by assumption.
  
  Now let $i\geq1$, assume that $S=\Gamma$ has a complete mapping $\pi$, and set $\rho(x)=x+\pi(x)$. In view of the first part of the proof, it suffices to show the existence of an $RSM_\Gamma(S,g+1;\Sigma)$. 
To do so, it is enough to replace each element of the last column of $A$, say $y$, with the pair $(x, \pi(x))$ where $x=\rho^{-1}(y)$.
\end{proof}

Throughout the paper we will only deal with solvable groups, and we will make use of a result by Hall and Paige who proved that a finite solvable group $\Gamma$ has a complete mapping if and only if its $2$-Sylow subgroup is either trivial or non-cyclic. This result was then extended to an arbitray group (see \cite{Ev18}).

\begin{theorem}[\cite{Hall-Paige}]\label{teo:completemappings}
A finite solvable group $\Gamma$ has a complete mapping if and only if 
its $2$-Sylow subgroup is either trivial or non-cyclic.
\end{theorem}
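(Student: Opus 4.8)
This is the classical theorem of M.\ Hall and Paige, and in the present paper it is invoked only by citation to \cite{Hall-Paige}; what follows is therefore a sketch of how one would reconstruct a proof. The plan is to separate the two implications: the forward (necessity) direction is short and valid for every finite group, whereas the reverse (sufficiency) direction is the substantial part, in which solvability is indispensable.

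For necessity I would first record the standard obstruction. If $\pi$ is a complete mapping of $\Gamma$, so that both $\pi$ and $\rho\colon x\mapsto x\pi(x)$ are permutations of $\Gamma$, then passing to the abelianization $\bar\Gamma=\Gamma/\Gamma'$ and setting $s=\prod_{x\in\Gamma}\bar x$ yields, with all products running over $x\in\Gamma$,
\[
  s=\prod_{x}\overline{\rho(x)}
   =\Big(\prod_{x}\bar x\Big)\Big(\prod_{x}\overline{\pi(x)}\Big)
   =s\cdot s,
\]
because $\pi$ and $\rho$ permute $\Gamma$; hence $s=\bar 1$, i.e.\ $\prod_{x\in\Gamma}x\in\Gamma'$. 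It then remains to check that $\prod_{x\in\Gamma}x\in\Gamma'$ holds precisely when the $2$-Sylow subgroup is trivial or non-cyclic. Since each fibre of $\Gamma\to\bar\Gamma$ has $|\Gamma'|$ elements, $\prod_{x\in\Gamma}\bar x=\big(\prod_{\bar y\in\bar\Gamma}\bar y\big)^{|\Gamma'|}$, and in an abelian group the product of all elements equals the product of its involutions, an element of order at most $2$. Thus the product can be non-trivial only when $|\Gamma'|$ is odd and $\bar\Gamma$ has a unique involution; Burnside's normal $2$-complement theorem shows that a cyclic non-trivial $2$-Sylow forces both (it makes $\Gamma/K$ cyclic for the odd-order complement $K$, so $\Gamma'\le K$), while in every other case the product collapses to $\bar 1$. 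This pins down the stated criterion.

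For sufficiency I would induct on $|\Gamma|$, using solvability to extract a minimal normal subgroup $N$, necessarily elementary abelian. Groups of odd order are immediate, since the identity map is a complete mapping exactly when squaring $x\mapsto x^2$ is a bijection. In the even-order case the engine is a splicing construction: from a complete mapping of $\Gamma/N$ one prescribes the coset-to-coset behaviour, and then fills in each coset using the abelian structure of $N$ together with a transversal, arranging that both the within-coset and the aggregate between-coset conditions hold simultaneously. I expect the main obstacle to be exactly this step when $N$ is a $2$-group: one must ensure that the quotient fed to the inductive hypothesis still has trivial or non-cyclic $2$-Sylow, and that a non-cyclic elementary abelian $2$-factor can always be traversed compatibly. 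This delicate combinatorial bookkeeping is the heart of \cite{Hall-Paige}, and it is precisely where solvability---guaranteeing an \emph{abelian} minimal normal subgroup at every stage---cannot be removed; dispensing with it is the content of the far later resolution of the Hall--Paige conjecture for arbitrary finite groups.
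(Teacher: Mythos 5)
The paper contains no proof of this statement: it is imported verbatim from \cite{Hall-Paige}, with \cite{Ev18} cited for the later removal of the solvability hypothesis, so the only meaningful comparison is with Hall and Paige's original argument, which your sketch purports to reconstruct. Your necessity half is correct and is essentially the classical one: the abelianization computation $s=s\cdot s$ forcing $\prod_{x\in\Gamma}x\in\Gamma'$, the reduction of the product over $\bar\Gamma$ to the product of its involutions, and Burnside's normal $2$-complement theorem to show that a non-trivial cyclic Sylow $2$-subgroup pushes the product outside $\Gamma'$ (since $\Gamma'\le K$ and the image of the product in $\Gamma/K\simeq\Z_{2^a}$ is the involution $2^{a-1}$). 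That half can stand as written.

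The sufficiency half, however, is not a proof, and the route you propose breaks exactly where you locate the ``delicate bookkeeping''. An induction that passes to $\Gamma/N$ for a minimal normal subgroup $N$ cannot be repaired in the way you hope: the Sylow condition is not inherited by quotients --- take $\Gamma=\Z_2\times\Z_2$ and $N$ one of the factors, so that $\Gamma/N\simeq\Z_2$ has no complete mapping --- hence there is in general no way to ``ensure that the quotient fed to the inductive hypothesis still has trivial or non-cyclic $2$-Sylow'', and the residual case ($N$ an elementary abelian $2$-group over a bad quotient) is the entire content of the theorem, not a compatibility check. Hall and Paige's actual argument also uses solvability differently from what you assert: not to produce abelian minimal normal subgroups, but via P.~Hall's theorem to produce a Hall $2'$-subgroup $Q$, giving the exact factorization $\Gamma=QP$ with $P$ a Sylow $2$-subgroup and $Q\cap P=\{1\}$. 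They prove separately that every non-cyclic finite $2$-group admits a complete mapping (an induction internal to $2$-groups) and that odd-order groups do (your squaring observation), and then invoke their composition lemma: if $\Gamma=AB$ with $A\cap B=\{1\}$ and both $A$ and $B$ admit complete mappings, then so does $\Gamma$. So the key lemma concerns exact factorizations rather than coset splicing over a quotient; as submitted, your sufficiency direction is a plan whose central step is both missing and, as formulated, headed into a known dead end.
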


This, combined with
Theorems \ref{row-sum matrix to factorizations} and \ref{row-sum matrix extension} means that when $S=\Gamma$ is as in Theorem \ref{teo:completemappings}, it is enough to construct row-sum matrices with $3$ columns.

\begin{cor}\label{lem:g=3impliesall}
Let $\Gamma$ be a solvable group of order $n$  whose $2$-Sylow subgroup is either trivial or non-cyclic. If there is an $RSM_\Gamma(\Gamma,3;\Sigma)$, then there exist
\begin{enumerate}
\item an $RSM_\Gamma(\Gamma,g;\Sigma)$, and
\item a solution to $GOP(C_{g}[n]; g\omega(\Sigma))$,
\end{enumerate}
for every $g\geq 3$.
\end{cor}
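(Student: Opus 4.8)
The plan is to derive Corollary~\ref{lem:g=3impliesall} as an essentially immediate consequence of the three results that precede it, namely Theorems~\ref{row-sum matrix to factorizations}, \ref{row-sum matrix extension}, and \ref{teo:completemappings}. The key observation is that the hypotheses on $\Gamma$ are tailored precisely so that Theorem~\ref{teo:completemappings} applies: since $\Gamma$ is solvable with trivial or non-cyclic $2$-Sylow subgroup, that theorem guarantees $\Gamma$ has a complete mapping. This is exactly the hypothesis needed to invoke case~(2) of Theorem~\ref{row-sum matrix extension} with $S=\Gamma$.

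First I would establish part~(1). Assuming we are given an $RSM_\Gamma(\Gamma,3;\Sigma)$, I apply Theorem~\ref{teo:completemappings} to obtain a complete mapping of $\Gamma$. Then, taking $S=\Gamma$, the second case of Theorem~\ref{row-sum matrix extension} yields an $RSM_\Gamma(\Gamma,3+i;\Sigma)$ for every $i\geq 1$. Since $3+i$ ranges over all integers $g\geq 4$ as $i$ ranges over the positive integers, and the hypothesis already supplies the case $g=3$, we obtain an $RSM_\Gamma(\Gamma,g;\Sigma)$ for every $g\geq 3$, which is exactly part~(1).

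Next I would deduce part~(2) from part~(1) by feeding each of these matrices into Theorem~\ref{row-sum matrix to factorizations}. That theorem, applied with $S=\Gamma$, gives a solution to $\mathrm{GOP}(C_g[\Gamma,\Gamma];g\omega(\Sigma))$ for each $g\geq 3$. It remains only to translate the graph $C_g[\Gamma,\Gamma]$ into the blown-up cycle $C_g[n]$: the excerpt records the isomorphism $C_g[n]\cong C_g[\Gamma,\Gamma]$ whenever $|\Gamma|=n$, so the solution transports along this isomorphism to a solution of $\mathrm{GOP}(C_g[n];g\omega(\Sigma))$, completing part~(2).

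I do not expect any genuine obstacle here, as the statement is a packaging corollary that chains together the preceding lemmas. The only point requiring a line of care is confirming that the complete-mapping hypothesis of Theorem~\ref{row-sum matrix extension}(2) is supplied by Theorem~\ref{teo:completemappings} under the stated conditions on $\Gamma$, and that the index shift $g=3+i$ correctly covers all $g\geq 4$ while the base case $g=3$ is the given input; both are routine. The mild subtlety is that part~(2) relies implicitly on identifying $C_g[\Gamma,\Gamma]$ with $C_g[n]$, so I would make sure to cite that isomorphism explicitly rather than leave the reader to reconstruct it.
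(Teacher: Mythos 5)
Your proposal is correct and follows exactly the route the paper intends: Theorem~\ref{teo:completemappings} supplies the complete mapping, case~(2) of Theorem~\ref{row-sum matrix extension} with $S=\Gamma$ extends the given three-column matrix to all $g\geq 3$, and Theorem~\ref{row-sum matrix to factorizations} together with the isomorphism $C_g[n]\cong C_g[\Gamma,\Gamma]$ yields the factorization. The paper leaves this chain implicit in the sentence preceding the corollary, so your explicit write-up, including the note on citing the isomorphism, is a faithful expansion of the paper's own argument.
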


We end this section by constructing row-sum matrices over an abelian group having a given list of associated row-sum orders.
\begin{theorem}\label{ell}
Let $\Gamma=\Z_2\times \Z_{2^\ell}\times \Z_{mn}$ with $n$ odd and $\ell\geq 1$.
Then, there is an $RSM_\Gamma(\Gamma,g;[^{2\gamma} m,   \;^{2\delta} 2^kn])$ whenever
$\gamma + \delta = 2^{\ell}mn$, $g\geq 3$ and 
$0\leq k\leq \ell$.
\end{theorem}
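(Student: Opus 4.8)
The plan is to reduce the statement to producing a single $\Delta$-permutation and then to let the extension machinery of Section \ref{sec:prelim} do the rest. First I would record the two structural facts about $\Gamma=\Z_2\times\Z_{2^\ell}\times\Z_{mn}$ that are needed. Since $\ell\geq1$, the $2$-Sylow subgroup of $\Gamma$ contains the non-cyclic group $\Z_2\times\Z_{2^\ell}$ and is therefore itself non-cyclic; as $\Gamma$ is abelian, hence solvable, Theorem \ref{teo:completemappings} shows that $\Gamma$ admits a complete mapping. By Theorem \ref{row-sum matrix extension}(2), applied with $S=\Gamma$, this means that any $RSM_\Gamma(\Gamma,2;\Sigma)$ extends to an $RSM_\Gamma(\Gamma,2+i;\Sigma)$ for every $i\geq1$, that is, to an $RSM_\Gamma(\Gamma,g;\Sigma)$ for every $g\geq3$. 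Moreover, an $RSM_\Gamma(\Gamma,2;\Sigma)$ is equivalent to a $\Sigma$-permutation of $\Gamma$, which by Hall's Theorem \ref{teohall} exists precisely when $\sum\Sigma=0$. Thus the entire problem collapses to choosing a single $|\Gamma|$-list $\Sigma$ whose order multiset is $[^{2\gamma}m,\;^{2\delta}2^kn]$ and whose elements sum to $0$.

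The crux is the choice of $\Sigma$, and the point is that the evenness of the multiplicities $2\gamma$ and $2\delta$ is exactly what lets the sum-zero obstruction be cancelled. The cyclic factor $\Z_{mn}$ contains an element of order $m$ (namely $n$, of order $mn/\gcd(n,mn)=m$) and one of order $n$ (namely $m$, of order $mn/\gcd(m,mn)=n$), while $\Z_{2^\ell}$ contains an element of order $2^k$ for each $k\leq\ell$ (namely $2^{\ell-k}$). Combining these I obtain an element $\alpha=(0,0,n)$ of order $m$ and an element $\beta=(0,2^{\ell-k},m)$ of order $\mathrm{lcm}(2^k,n)=2^kn$. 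I then set
\[
\Sigma=\left[\,^{\gamma}\alpha,\;^{\gamma}(-\alpha),\;^{\delta}\beta,\;^{\delta}(-\beta)\right].
\]
This list has $2\gamma+2\delta=2(\gamma+\delta)=2^{\ell+1}mn=|\Gamma|$ entries, so it is a legitimate $|\Gamma|$-list; since $\mathrm{ord}(-x)=\mathrm{ord}(x)$, its order multiset is $[^{2\gamma}m,\;^{2\delta}2^kn]$ as demanded; and since every element is paired with its negative, $\sum\Sigma=\gamma\alpha-\gamma\alpha+\delta\beta-\delta\beta=0$ holds for free.

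Finally, with $\sum\Sigma=0$ established, Theorem \ref{teohall} produces a $\Sigma$-permutation of $\Gamma$, equivalently an $RSM_\Gamma(\Gamma,2;\Sigma)$, and the complete-mapping clause of Theorem \ref{row-sum matrix extension} lifts it to an $RSM_\Gamma(\Gamma,g;\Sigma)$ for every $g\geq3$, which is the assertion. I anticipate no genuine obstacle here: the only items requiring care are the elementary order computations confirming that $\alpha$ and $\beta$ have orders exactly $m$ and $2^kn$, and the observation that pairing each chosen element with its negative simultaneously preserves all element orders and forces the Hall sum to vanish. The conceptual content is entirely in that pairing trick, which is available precisely because each prescribed order is required an even number of times.
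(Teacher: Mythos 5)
Your proposal is correct and takes essentially the same approach as the paper: the paper's proof uses the identical difference list $\Delta=\left[\,^{\gamma}(0,0,n),\,^{\gamma}(0,0,-n),\,^{\delta}(0,2^{\ell-k},m),\,^{\delta}(0,-2^{\ell-k},-m)\right]$, invokes Hall's Theorem~\ref{teohall}, and handles general $g$ through the same complete-mapping extension machinery. The only cosmetic difference is that you base the argument at $g=2$ via the stated equivalence between an $RSM_\Gamma(\Gamma,2;\Sigma)$ and a $\Sigma$-permutation and then extend by Theorem~\ref{row-sum matrix extension}(2), whereas the paper writes down an explicit three-column base matrix with rows $A_x=\big(-\psi(x)\quad x\quad \varphi(\psi(x)-x)\big)$ (using a $\Gamma$-permutation $\psi$, which plays the role of your complete mapping) and applies Corollary~\ref{lem:g=3impliesall}.
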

\begin{proof} 
Since $\ell\geq 1$, the Sylow $2$-subgroup of $\Gamma$ is non-cyclic. Hence, by Corollary \ref{lem:g=3impliesall}, it is enough to prove the assertion for $g=3$.
Set
\[
\Delta = 
         \left[  
           ^{\gamma} (0,0,n), \,^{\gamma}(0,0,-n), 
           \,^{\delta} (0,  2^{\ell-k},  m), 
           \,^{\delta} (0, -2^{\ell-k}, -m)
         \right]. 
\]
Since $\sum \Delta = 0 = \sum \Gamma$, by Theorem 
\ref{teohall}, there are a $\Delta$-permutation $\varphi$ of $\Gamma$
and a $\Gamma$-permutation $\psi$ of $\Gamma$.
Now consider the $2^{\ell+1}mn\times r$ matrix $A$ whose rows, indexed over $\Gamma$, have the following form 
\[
A_x = 
\big(-\psi(x)\quad x\quad \varphi(\psi(x)-x)\big).    
\]
One can easily check that each column of $A$ is a permutation of $\Gamma$, and 
\[
\big[\textstyle{\sum A_x} \;\big|\; x\in \Gamma \big]= \Delta.
\]
Since $\Delta$ contains $2\gamma$ elements of order $m$ and $2\delta$ elements of order $2^kn$, the assertion follows.
\end{proof}

\subsection{Matrices over a generalized dihedral group}\label{Sec:matrices}
From now on, 
$g,m$ and $n$ will denote positive integers with 
$g\geq 3$, $m$ and $n$ odd, and $G = \Z_{m} \times\Z_{2^{k+1}n}$ with $k\geq2$. Notice that we allow both $m$ and 
$n$ to be equal to $1$. In the case $m=1$ this means that we work with   $\Z_{1} \times\Z_{2^{k+1}n}\simeq  \Z_{2^{k+1}n}$.
Recall that the coordinatewise multiplication  by any element $x\in G$ 
is a homomorphism of the group $G$; in particular, the multiplication by 
$\epsilon=(-1,-1)$ is a group automorphism, and its order is $2$ since $\epsilon^2=(1,1)$. 
Therefore, we can define the semidirect product 
$G \rtimes \Z_2$ whose underlying set is 
$G\times \Z_2$, 
and the group operation, still denoted by $+$, is defined as follows:
\[
  (x, \tau) + (x', \tau') = (x+ \epsilon^\tau x', \tau+\tau').
\]
It is not difficult to check that $G \rtimes \Z_2 \simeq  Dih(\Z_{m} \times \Z_{2^{k+1}n})$
is the generalized dihedral group over $\Z_{m} \times \Z_{2^{k+1}n}$.

From now on, $\Gamma = G \rtimes \Z_2$ and for every subset $S\subseteq \Gamma$, we
simply write $C_g[S]$ in place of $C_g[\Gamma, S]$.
Note that $2\Gamma \simeq 2G = \Z_m \times 2\Z_{2^{k+1}n}$ and $|2\Gamma| = 2^{k}mn$.
Consider the interval $I=\{0, \ldots,2^kn-1\}$ and the function $\rho:2\Z_{2^{k+1}n}\rightarrow I$
with $\rho(y)$ being defined by $2\rho(y)=y$. Similarly, for every $x\in \Z_m$ ($m$ odd) we denote by $x/2$ the unique element of $\Z_m$ such that $2(x/2)=x$.
Now let $\varphi = (\varphi_1, \varphi_2, \varphi_3)$, where 
each $\varphi_h$ is a  permutation of $\Z_m \times  2\Z_{2^{k+1}n}$, and define the following five bijections:
\begin{align*}
  a_h&: \Z_m \times  2\Z_{2^{k+1}n}   \rightarrow \Z_m \times (2\Z_{2^{k+1}n}+1),\; (x,y)\mapsto \varphi_h(x,y) + (0,1),
\end{align*}
for $h\in\{1,2,3\}$, and 
\begin{align*}  
    b&: \Z_m \times  2\Z_{2^{k+1}n}   \rightarrow  \Z_m \times  (-I),\; (x,y)\mapsto (-x/2,-\rho(y)),\\
    c&: \Z_m \times  2\Z_{2^{k+1}n}   \rightarrow  \Z_m \times (I+1),\; (x,y)\mapsto (x/2,\rho(y)+1).
\end{align*}
Finally, for every  $(x,y)\in \Z_m \times  2\Z_{2^{k+1}n}$, 
let $A(x,y)$ and $A'(x,y)$ be the $3\times3$ matrices with entries from $\Gamma$ defined as follows: 
\[
A(\varphi,(x,y)) =
 \left[\begin{array}{rrr}
 (a_1(x,y),0) \;&\;   (b(x,y), 1) \;&\;  (c(x,y), 1) \\[2mm]
  (c(x,y), 1) \;&\;  (a_2(x,y),0) \;&\;  (b(x,y), 1) \\[2mm]
  (b(x,y), 1) \;&\;   (c(x,y), 1) \;&\; (a_3(x,y),0) 
\end{array}\right]
\]
\[
A'(\varphi,(x,y)) =
 \left[\begin{array}{rrr}
 (a_1(x,y),0)  \;&\;  (c(x,y), 1) \;&\; (b(x,y), 1)\\[2mm]
  (c(x,y), 1)  \;&\;  (b(x,y), 1) \;&\; (a_2(x,y),0)\\[2mm]
  (b(x,y), 1)  \;&\; (a_3(x,y),0) \;&\; (c(x,y), 1)
\end{array}\right].
\]
Note that $A'(\varphi,(x,y))$ is obtained from $A(\varphi,(x,y))$ by swapping columns 2 and 3. 
From now on, given a matrix $M$ over an arbitrary group, 
$\sum M_h$ represents the (left-to-right) sum of the $h$-th row of $M$ denoted by $M_h$.

\begin{lemma}\label{A(x)rowsum}
For every $(x,y)\in\Z_{m}\times 2\Z_{2^{k+1}n}$, we have that 
\[
\sum A(\varphi,(x,y))_h =
\begin{cases}
 \big((\varphi_h(x,y)-(x,y)), 0\big)  & \text{if $h=1,3$}, \\
 \big((x,y)-\varphi_h(x,y)), 0\big)  & \text{if $h=2$}, \\  
\end{cases}
\]
\[
\sum A'(\varphi,(x,y))_h =
\begin{cases}
  \;\;\;\big(  (\varphi_h(x,y) + (x,y)),  0 \big) + (0,2,0)  & \text{if $h=1,2$}, \\
        -\big((\varphi_h(x,y) + (x,y)),  0 \big) - (0,2,0)  & \text{if $h=3$}. \\  
\end{cases}
\]
\end{lemma}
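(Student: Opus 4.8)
The plan is to verify all six identities by a single mechanical computation of the left-to-right row-sums in the semidirect product $\Gamma = G\rtimes\Z_2$, organized around one structural observation. Summing entries $(g_1,\sigma_1)+\cdots+(g_r,\sigma_r)$ left to right in $\Gamma$, an easy induction on the group law $(g,\sigma)+(g',\sigma')=(g+\epsilon^{\sigma}g',\sigma+\sigma')$ shows that the result equals
\[
\Big(\textstyle\sum_{j=1}^{r}\epsilon^{\,\sigma_1+\cdots+\sigma_{j-1}}g_j,\; \sigma_1+\cdots+\sigma_r\Big).
\]
Since $\epsilon=(-1,-1)$ acts on $G$ as negation, the $G$-part of the $j$-th entry enters with sign $(-1)^{\sigma_1+\cdots+\sigma_{j-1}}$; that is, it is negated precisely when preceded by an odd number of entries whose $\Z_2$-coordinate is $1$.

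First I would record the two immediate consequences of this formula for our matrices. Each row of $A(\varphi,(x,y))$ and of $A'(\varphi,(x,y))$ contains exactly one entry of the form $(a_h,0)$ and two entries $(b,1)$, $(c,1)$, so the $\Z_2$-coordinate of every row-sum is $0+1+1=0$; this already explains why all six stated values lie in $G\times\{0\}$. It then remains only to track, row by row, the sign with which each of $a_h$, $b$, $c$ contributes to the $G$-part, which by the displayed formula is determined solely by the parity of the number of preceding entries with second coordinate $1$.

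The second ingredient is two short identities coming directly from the definitions of $b$ and $c$. Using $2\rho(y)=y$ and $2(x/2)=x$, I would compute
\[
c(x,y)-b(x,y)=(x,\,y+1),\qquad b(x,y)-c(x,y)=(-x,\,-y-1),
\]
so that the halving built into $b$ and $c$ recombines to recover $(x,y)$ up to a shift in the second coordinate. Combined with $a_h(x,y)=\varphi_h(x,y)+(0,1)$, these collapse each signed sum to the claimed closed form: for instance, the first row of $A$ gives $a_1+b-c=\varphi_1(x,y)+(0,1)+(-x,-y-1)=\varphi_1(x,y)-(x,y)$, while the first row of $A'$ gives $a_1+c-b=\varphi_1(x,y)+(0,1)+(x,y+1)=\varphi_1(x,y)+(x,y)+(0,2)$. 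The remaining four rows are identical in spirit.

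The only genuine bookkeeping—and the step most prone to sign errors—is matching each row's column pattern to the correct parity of preceding $\tau=1$ entries. In $A$ the distinguished entry $(a_h,0)$ sits in column $h$, so it is preceded by $0$, $1$, or $2$ entries with $\tau=1$ for $h=1,2,3$ respectively; only $h=2$ gives an odd count, which is exactly what negates $\varphi_2$ relative to $\varphi_1,\varphi_3$ and produces the two-case formula for $A$. The analogous count for $A'$ places $(a_h,0)$ first, last, and in the middle for $h=1,2,3$, so now only $h=3$ has an odd number of preceding $\tau=1$ entries; this flips the sign of the entire third-row sum and yields the negated case. I expect no conceptual difficulty beyond this parity accounting, since everything else is forced by the two displayed identities.
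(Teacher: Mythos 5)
Your proof is correct and follows essentially the same route as the paper: both reduce each row-sum, via the semidirect-product law $(x,\tau)+(x',\tau')=(x+\epsilon^\tau x',\tau+\tau')$, to a signed sum of $a_h$, $b$, $c$ in $G$ (the paper writes this as $\sum A(\varphi,(x,y))_h=(-1)^h(\sigma(x,y),0)$ with $\sigma=c-b-a_h$), and then collapse it using $2\rho(y)=y$ and $2(x/2)=x$, which is exactly your identity $c(x,y)-b(x,y)=(x,y+1)$. Your explicit general formula for iterated sums and the parity bookkeeping merely make precise what the paper leaves as a routine check, so the two arguments coincide in substance.
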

\begin{proof} Note that 
$\sum A(\varphi,(x,y))_h = (-1)^h(\sigma(x,y), 0)$, where
$\sigma = c-b-a_h$, for every $h=1,2,3$.
Also, 
\begin{align*}
\sigma(x,y) &= 
(x/2,\rho(y)+1)-(-x/2, -\rho(y))-(\varphi_h(x,y)+(0,1)) \\
&= (x, 2\rho(y))-\varphi_h(x,y) = (x,y)-\varphi_h(x,y).
\end{align*}
The result for $A'(\varphi,(x,y))$ is similar.
\end{proof}

\section{Row-sum matrices over a generalized dihedral group}
\label{RSM_dihedral}
In this section, we build row-sum matrices over the group $\Gamma \simeq Dih(\Z_{m} \times \Z_{2^{k+1}n})$
(defined in Section \ref{Sec:matrices}) and prove the following.

\begin{theorem}\label{row-sum exists}
Let $g\geq 3$, $k\geq 0$, and let $m,n\geq 1$ be odd integers. 
Then $RSM_\Gamma(\Gamma,g;[^{\alpha} m,   \;^{\beta} 2^kn])$ exists
if and only if $\alpha+\beta=2^{k+2}mn$ except possibly in the following cases 
\begin{enumerate}
  \item $k\geq 2$ and $\beta=0$, or
  \item $k=1$ and $\alpha,\beta\in\{0,2,4\}$.
\end{enumerate}
\end{theorem}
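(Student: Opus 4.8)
The plan is to establish the necessary condition easily and then build the required row-sum matrices over $\Gamma = Dih(\Z_m\times\Z_{2^{k+1}n})$ by exploiting the explicit $3\times 3$ matrix gadgets $A(\varphi,(x,y))$ and $A'(\varphi,(x,y))$ from Section \ref{Sec:matrices}. For the necessary condition, note that $\Gamma$ has order $2^{k+2}mn$, so by the definition of a row-sum matrix the list $\Sigma$ must have exactly $|\Gamma|=2^{k+2}mn$ entries, forcing $\alpha+\beta=2^{k+2}mn$. By Theorem \ref{teo:completemappings}, the generalized dihedral group $\Gamma$ is solvable with non-cyclic $2$-Sylow subgroup when $k\geq 1$ (and trivial when $k=0$), so it has a complete mapping; hence by Corollary \ref{lem:g=3impliesall} it suffices to construct an $RSM_\Gamma(\Gamma,3;\Sigma)$, and all $g\geq 3$ follow. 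This reduces the whole theorem to the single case $g=3$.

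The core idea is that stacking the $3\times 3$ blocks $A(\varphi,(x,y))$ and $A'(\varphi,(x,y))$ over all $(x,y)\in\Z_m\times 2\Z_{2^{k+1}n}$ produces a $3|2G|\times 3 = (3\cdot 2^k mn)\times 3$ matrix, but $\Gamma$ has $2^{k+2}mn=4\cdot 2^kmn$ elements, so I expect the construction to assemble $\Gamma$ from these blocks together with auxiliary rows. First I would verify that as $(x,y)$ ranges over $\Z_m\times 2\Z_{2^{k+1}n}$, the columns of the stacked blocks range over all of $\Gamma$: the entries in the ``$0$-layer'' are of the form $(a_h(x,y),0)$ with $a_h(x,y)\in\Z_m\times(2\Z_{2^{k+1}n}+1)$, while the ``$1$-layer'' entries $(b(x,y),1)$ and $(c(x,y),1)$ cover $\Z_m\times(-I)$ and $\Z_m\times(I+1)$; since $I=\{0,\dots,2^kn-1\}$ the sets $-I$ and $I+1$ partition $\Z_{2^{k+1}n}$, so $b$ and $c$ jointly tile the second layer. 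I would check that each of the three columns is genuinely a permutation of $\Gamma$ when the $\varphi_h$ are chosen to be appropriate permutations of $\Z_m\times 2\Z_{2^{k+1}n}$. Then Lemma \ref{A(x)rowsum} computes the row-sums: from the $A$-blocks I get row-sums of the form $\big(\pm(\varphi_h(x,y)-(x,y)),0\big)$, and from the $A'$-blocks I get $\pm\big((\varphi_h(x,y)+(x,y)),0\big)\pm(0,2,0)$. The element orders of these sums live entirely in the subgroup $2\Gamma\cong\Z_m\times 2\Z_{2^{k+1}n}$, whose exponent structure is exactly what lets me hit orders $m$ (from the $\Z_m$-part) and $2^kn$ (from the cyclic $2$-part).

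The heart of the argument is therefore a counting/tuning step: by freely choosing the permutations $\varphi_1,\varphi_2,\varphi_3$ (using Hall's Theorem \ref{teohall} to realize prescribed difference lists, and the special $\Delta$-permutations of Theorems \ref{special_1} and \ref{special_2} to force particular values), I would arrange the multiset of row-sums so that exactly $\alpha$ of them have order $m$ and exactly $\beta$ have order $2^kn$, for any admissible split with $\alpha+\beta=2^{k+2}mn$. The differences $\varphi_h(x,y)-(x,y)$ can be made to equal any list summing to zero, so the $A$-blocks are flexible; the $A'$-blocks contribute the $(0,2,0)$-shifted sums that are needed to reach orders not attainable from pure differences, which is precisely why Theorem \ref{special_2} (producing a controlled number of entries of orders like $n$ and specific $2$-power orders) is invoked. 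I expect the main obstacle to be the exceptional cases: when $k\geq 2$ and $\beta=0$, one wants \emph{all} row-sums to have order $m$, i.e.\ to lie in the $\Z_m$-factor, but the $(0,2,0)$ contributions and the layer structure obstruct eliminating every element of even order, and when $k=1$ the group is too small (the $2$-part is only $\Z_4$) to realize the smallest nonzero values of $\beta$, forcing the $\beta\in\{0,2,4\}$ exceptions. I would handle the generic splits by a clean parametrized construction and then treat the small/extreme values of $\alpha,\beta$ separately, carefully tracking parity and divisibility so that the only genuinely unreachable configurations are exactly those listed.
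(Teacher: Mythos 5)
Your overall architecture does match the paper's: reduce to $g=3$ via Hall--Paige and Corollary \ref{lem:g=3impliesall}; cover the coset $\Gamma\setminus 2\Gamma$ by stacking the blocks $A(\varphi,(x,y))$ over $2G=\Z_m\times 2\Z_{2^{k+1}n}$, with the $\varphi_h$ tuned by Hall's Theorem \ref{teohall} and with $A'$-blocks used to adjust parities; cover the subgroup $2\Gamma$ by auxiliary rows built from two further permutations; then tune the multiset of row-sum orders. This is precisely Theorems \ref{Gamma-2Gamma} and \ref{2Gamma} and the assembly of Sections \ref{case2}--\ref{case0}, and your coset-covering check ($-I$ and $I+1$ partitioning $\Z_{2^{k+1}n}$) is the right verification. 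Two factual slips before the main issue: for $k=0$ the $2$-Sylow subgroup of $\Gamma$ is Klein four (order $4$), not trivial --- it is non-cyclicity that lets you invoke Theorem \ref{teo:completemappings}; and the listed exceptions are ``except possibly'', i.e.\ cases left open, so your closing claim that exactly these configurations are ``genuinely unreachable'' asserts a non-existence that neither you nor the paper proves.

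The genuine gap is the mechanism for the boundary cases, which is the real content of the theorem (these feed the HWP applications with $1\in\{\alpha,\beta\}$). Your plan defers them to ``treat the small/extreme values separately, carefully tracking parity'', but stacking plus Hall-tuning cannot reach them: in the coset part, the $A$-blocks produce row-sum orders in pairs and an $A'$-swap flips three sums at once, so the achievable counts satisfy $\alpha_2,\beta_2\neq 1$ (even, or odd $\geq 3$); the $2\Gamma$ part has its own $k$-dependent constraints (Theorem \ref{2Gamma}: both counts even when $k\geq 2$, both odd $\geq 3$ when $k=1$). Since $\alpha=\alpha_1+\alpha_2$ and $\beta=\beta_1+\beta_2$, no split of a plain stacked matrix realizes $\alpha=1$ or $\beta\in\{1,3\}$ when $k\geq2$, nor $\alpha=1$ or $\beta=1$ when $k\in\{0,1\}$. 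The paper's device --- absent from your proposal --- is a local surgery on the stacked matrix $C$: select a small submatrix $U$ whose rows mix rows of the $A$-part and rows of the $B$-part, and replace it by a matrix $U'$ each of whose columns is a permutation of the corresponding column of $U$, with specific entries pinned down via Remark \ref{rem:deltapermutation} and the special permutations of Theorems \ref{special_1} and \ref{special_2}, so that exactly one (or three) row-sum orders change while all columns remain permutations of $\Gamma$. This cross-coset rearrangement, executed separately for $k\geq2$, $k=1$ (Theorems \ref{k=1,beta=1}--\ref{k=1,alpha=1,n=1}) and $k=0$, is what settles the cases your plan leaves open; without it you prove only the generic splits.
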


Its proof  is given in Sections 
\ref{case2}, \ref{case1} and \ref{case0} which respectively deal with the cases $k\geq 2$, $k=1$
and $k=0$. It mostly relies on Theorem \ref{ell}, and the following Theorems \ref{Gamma-2Gamma} and \ref{2Gamma}. 

Recall that $g,m,n$ are positive integers with $g\geq 3$, and $m$ and $n$ odd.

\begin{theorem}\label{Gamma-2Gamma}
$RSM_\Gamma(\Gamma\setminus2\Gamma,g;[^{\alpha} m,   \;^{\beta} 2^kn])$ exists when
$\alpha\neq1$ and $\beta\neq1$, with  
$\alpha+\beta = 3\cdot 2^kmn$.
\end{theorem}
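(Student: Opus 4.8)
The plan is to settle the case $g=3$ directly and then bootstrap to all $g$. For $g=3$ I would stack the $3\times 3$ blocks $A(\varphi,(x,y))$ and $A'(\varphi,(x,y))$ of Section~\ref{Sec:matrices}, using exactly one block per pair $(x,y)\in\Z_m\times 2\Z_{2^{k+1}n}$. Since there are $2^kmn$ such pairs and each block has three rows, the stacked matrix has $3\cdot 2^kmn=|\Gamma\setminus2\Gamma|$ rows, as needed. First I would record the decomposition $S:=\Gamma\setminus2\Gamma=S_0\sqcup S_1$, where $S_0=(G\setminus2G)\times\{0\}$ has $2^kmn$ elements and $S_1=G\times\{1\}$ consists of the $2^{k+1}mn$ involutions of $\Gamma$, and note $S=-S$. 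The verification that each column is a permutation of $S$ splits by entry type: the entries $(b(x,y),1)$ and $(c(x,y),1)$ range over $\Z_m\times(-I)$ and $\Z_m\times(I+1)$, and since $(-I)\sqcup(I+1)=\Z_{2^{k+1}n}$ they jointly cover $S_1$ in every column; the entries $(a_h(x,y),0)=(\varphi_h(x,y)+(0,1),0)$ cover $S_0$ because each $\varphi_h$ permutes $\Z_m\times2\Z_{2^{k+1}n}$. (In column $1$ both block types agree; when mixing the two types one must track whether $a_2$ or $a_3$ appears in columns $2,3$.)

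Control of the row-sum orders comes from Lemma~\ref{A(x)rowsum}: every row sum lies in $2\Gamma\cong 2G=\Z_m\times2\Z_{2^{k+1}n}\cong\Z_m\times\Z_{2^kn}$, and for an $A$-block the three row sums are $\pm(\varphi_h(x,y)-(x,y))$. Because $2G$ is abelian, Hall's Theorem~\ref{teohall} lets me prescribe each difference list $\Delta_h=[\varphi_h(x,y)-(x,y)]$ to be \emph{any} sum-zero $2^kmn$-list of $2G$. Taking $\Delta_h$ to consist of copies of a fixed order-$m$ element and a fixed order-$2^kn$ element together with their negatives, I can dictate how many row sums have order $m$ and how many have order $2^kn$; assembling $\omega(\Delta_1)\cup\omega(\Delta_2)\cup\omega(\Delta_3)$ realizes the target profile $[^\alpha m,\,^\beta 2^kn]$ over a wide range of $(\alpha,\beta)$.

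The main obstacle is a parity constraint. For $k\ge1$ the order-$2^kn$ elements of $2G$ are exactly those with odd $\Z_{2^kn}$-component, whereas the order-$m$ elements have even $\Z_{2^kn}$-component; since a difference list sums to zero (an element with even second coordinate) it must contain an even number of order-$2^kn$ elements, and as $a_h+b_h=2^kmn$ is even this forces $a_h$ even too. Hence the pure-difference ($A$-only) construction yields only even $\alpha$ and even $\beta$. Since $\alpha+\beta=3\cdot2^kmn$ is even, $\alpha$ and $\beta$ always share a parity: the even–even case is handled by $A$-blocks alone, while the odd–odd case requires the $A'$-blocks, whose row sums by Lemma~\ref{A(x)rowsum} carry the extra order-$2^kn$ shift $(0,2)$ and thereby flip both parities. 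Here two complications must be managed: mixing $A$- and $A'$-blocks forces $\varphi_2$ and $\varphi_3$ to be chosen compatibly on the $A'$-support so columns $2,3$ stay permutations of $S$, and the ``sum'' lists $[\varphi_h(x,y)+(x,y)]$ attached to $A'$ are not governed by Hall's theorem and so must be produced by an explicit $\varphi_h$ (for instance via the special $\Delta$-permutations of Theorems~\ref{special_1} and~\ref{special_2}). The genuine exceptions $\alpha=1$ and $\beta=1$ fall out naturally, since a single order-$m$ (resp.\ order-$2^kn$) element can never sit in a sum-zero configuration, so precisely these two odd–odd splits are unreachable. I expect the bulk of the write-up to be the casework, organized by the common parity of $\alpha$ and $\beta$, showing that every admissible split with $\alpha,\beta\neq1$ is assembled from $A$- and $A'$-blocks.

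Finally, to pass from $g=3$ to arbitrary $g\ge3$ I would invoke $S=-S$ together with Theorem~\ref{row-sum matrix extension}(1), which appends pairs of mutually-negating columns and raises $g$ by any even amount. This reduces the whole theorem to the two base cases $g=3$ and $g=4$, the latter obtained by the same block method with an analogous four-column block (or, absent a clean block, by a direct ad hoc construction with the same order bookkeeping).
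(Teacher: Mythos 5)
Your proposal follows the paper's proof in all essentials: for $g=3$ you stack the blocks $A(\varphi,(x,y))$ indexed by $\Z_m\times 2\Z_{2^{k+1}n}$, verify the columns via the split $S_0\sqcup S_1$, use Hall's Theorem~\ref{teohall} to prescribe the difference lists (the paper takes $\Delta_h=[\,^{a_h}(1,0),\,^{a_h}(-1,0),\,^{b_h}(0,2),\,^{b_h}(0,-2)]$, exactly your ``fixed order-$m$ and order-$2^kn$ elements together with their negatives''), and pass from the even--even profile to the odd--odd one via $A'$-blocks. One simplification you miss: the paper replaces a \emph{single} block $A(\varphi,z)$ by $A'(\varphi,z)$, at the point $z=\left(\frac{m-1}{2},0\right)$, using Remark~\ref{rem:deltapermutation} to pin $\varphi_h(z)=z+(1,0)$ for all three $h$ simultaneously. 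This at once guarantees your column-compatibility condition (all three $a_h(z)$ coincide, so swapping columns $2$ and $3$ of that block keeps every column a permutation of $S$) and makes the one relevant ``sum'' value controllable: $\varphi_h(z)+z=(0,0)$, so by Lemma~\ref{A(x)rowsum} the swapped block's row sums are $\pm(0,2,0)$ of order $2^kn$, turning $(\alpha+3,\beta-3)$ into $(\alpha,\beta)$. Hence no appeal to the special permutations of Theorems~\ref{special_1} and~\ref{special_2} is needed here; the paper reserves those for the boundary cases of Theorem~\ref{row-sum exists} in Section~\ref{case1}.

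Two concrete flags. First, your assertion that ``$\alpha+\beta=3\cdot 2^kmn$ is even'' is false at $k=0$: with $m,n$ odd the sum is odd, so $\alpha$ and $\beta$ then have \emph{different} parities and neither your even--even nor your odd--odd machinery literally applies; one would need mixed-parity zero-sum lists $\Delta_h$ of odd size $mn$, which do exist since $2G$ has odd order when $k=0$, but this case must actually be written out. (To be fair, the paper's own two-case proof is likewise vacuous at $k=0$, even though Theorem~\ref{Gamma-2Gamma} is invoked with $k=0$ in Section~\ref{case0}.) Second, on extending in $g$: you are right that Theorem~\ref{row-sum matrix extension}(1) with $S=-S$ changes $g$ only by even amounts, and your proposed $g=4$ base case is left unconstructed; the paper instead cites Corollary~\ref{lem:g=3impliesall}, which strictly speaking requires $S=\Gamma$, and in its downstream applications only the $g=3$ instance is used, since all column-extension is performed after concatenating with a matrix over $2\Gamma$ to obtain an RSM over all of $\Gamma$. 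You could close your even-$g$ gap the same way: prove the statement for $g=3$ and extend at the level of $\Gamma$. Finally, your side remark that $\alpha=1$ and $\beta=1$ are ``unreachable'' is not established by the sum-zero heuristic (the row sums of an arbitrary RSM over $\Gamma\setminus 2\Gamma$ need not come from a zero-sum difference configuration), but since the theorem asserts nothing in those cases, this costs you nothing.
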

\begin{proof} 
We first deal with the case where $\alpha$ and $\beta$ are even, with
$0\leq \alpha, \beta \leq 3\cdot 2^{k}mn$.
Letting $\alpha=2a$ and $\beta=2b$, and recalling that $|\Gamma\setminus2\Gamma| = 3\cdot 2^kmn$, 
we can write
\[\textstyle
  a = \sum_{h=1}^3 a_h \;\;\text{and}\;\; b = \sum_{h=1}^3 b_h,
\]
where $2(a_h + b_h) = 2^{k}mn = |2\Gamma|$,
for every $h\in\{1,2,3\}$.
By Theorem \ref{teohall}, there exists a $\Delta_h$-permutation 
$\varphi_h$ of $\Z_m\times 2\Z_{2^{k+1}n}$, where
\[\Delta_h=
    \left[ 
           \,^{a_h} (1,{0}), \,^{a_h} (-1,{0}), 
           \,^{b_h} (0,{2}), \,^{b_h} (0,-{2})
    \right].
\]
Notice that $|\Delta_h|=[\,^{2a_{h}}m, \,^{2b_{h}}2^kn]$, for every $h\in\{1,2,3\}$.
Let $A = \left[A(\varphi, (x,y))\right]$ be the column block-matrix whose blocks are 
the matrices $A(\varphi, (x,y))$ for $(x,y)\in \Z_m\times 2 \Z_{2^{k+1}n}$. Note that $A$ is a
$(3\cdot 2^{k}mn)\times 3$ matrix whose columns are permutations of $\Gamma\setminus 2\Gamma$.
Also, letting $L_A$ be the list of row-sums of $A$, 
by Lemma \ref{A(x)rowsum} we have that 
\[
\omega(L_A) = \omega(\Delta_1) \ \cup\ \omega(\Delta_2) \ \cup\ \omega(\Delta_3) = 
\left[\,^{2a}m, \,^{2b}2^kn\right],
\]
and the result follows.

It is left to deal with the case where $\alpha$ and $\beta$ are odd,
with $3\leq \alpha,\beta \leq 3\cdot 2^{k}mn$.
Let $A$ be the matrix built above with 
$2a= \alpha+3\geq 6$ and $2b = \beta-3$. Since $2a\geq 6$,
we can take $a_1, a_2, a_3\geq 1$. 
Also, by Remark \ref{rem:deltapermutation}, we can assume that
for $z = (\frac{m-1}{2}, 0)$ we have
$\varphi_h(z) =z+(1,0)$,   for every $h=1,2,3$.
We denote by $A'$ the 
$(3\cdot 2^{k}mn)\times 3$
matrix that we obtain 
by replacing the block 
$A(\varphi,z)$ of $A$ with $A'(\varphi,z)$. 
Clearly, 
the columns of $A'$ are still permutations of $\Gamma\setminus2\Gamma$. 
Also, note that by Lemma \ref{A(x)rowsum}, 
each $\sum  A(\varphi,z)_h$ has order $m$, 
whereas $\sum A'(\varphi,z)_h = \pm  (0,2,0)$ has order $2^kn$, for every $h=1,2,3$.
Therefore, denoting by $L_{A'}$ the list of row sums of $A'$,
we have that 
\[
\omega(L_{A'}) = [\,^{2a-3}m, \,^{2b+3}2^kn] = [\,^{\alpha}m, \,^{\beta}2^kn].
\]
The result then follows by applying Corollary \ref{lem:g=3impliesall} to $A'$.
\end{proof}

\begin{theorem}\label{2Gamma}
$RSM_\Gamma(2\Gamma,g;[^{\alpha} m,   \;^{\beta} 2^kn])$ 
exists whenever
$\alpha+\beta = 2^kmn$ and one of the following conditions holds:
\begin{enumerate}
  \item $k=0$, $\alpha\neq 1$ and $\beta\neq 1$, or
  \item $k=1$ and $\alpha,\beta\geq 3$ are  odd, or
  \item $k\geq 2$, $\alpha,\beta$ are even and $\beta\geq 2$.
\end{enumerate}
\end{theorem}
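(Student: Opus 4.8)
The plan is to exploit the fact that $2\Gamma$ is an \emph{abelian subgroup} of $\Gamma$. Indeed, doubling a reflection gives $(x,1)+(x,1)=(x+\epsilon x,0)=(0,0)$, while $(x,0)$ doubles to $(2x,0)$; hence $2\Gamma=2G\times\{0\}\cong\Z_m\times 2\Z_{2^{k+1}n}\cong\Z_m\times\Z_{2^kn}=:H$, as already noted in Section~\ref{Sec:matrices}. Since every entry of the matrices we seek lies in $2\Gamma$ and this subgroup is abelian, all (left-to-right) row-sums are computed inside $H$, so the task reduces to building row-sum matrices over the abelian group $H$ with $S=H$. Two column reductions are then available. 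First, $S=2\Gamma=-2\Gamma$, so Theorem~\ref{row-sum matrix extension}(1) passes from $g$ to $g+2$; it therefore suffices to produce, for each admissible order-list, base matrices with $g=3$ and, where the parity of $g$ permits, $g=4$. Second, when $k=0$ the group $H\cong\Z_m\times\Z_n$ has odd order, hence trivial $2$-Sylow and a complete mapping, so Corollary~\ref{lem:g=3impliesall} collapses everything to the single case $g=3$.

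For the even-count case (case~3) the even values of $g$ are immediate from Hall's Theorem~\ref{teohall}. Writing $\alpha=2a$, $\beta=2b$, fix an order-$m$ element $\mu$ and an order-$2^kn$ element $\nu$ of $H$ and set $\Delta=[^a\mu,\,^a(-\mu),\,^b\nu,\,^b(-\nu)]$; then $\sum\Delta=0$ and $\omega(\Delta)=[^\alpha m,\,^\beta 2^kn]$, so a $\Delta$-permutation is exactly a $g=2$ matrix, which Theorem~\ref{row-sum matrix extension}(1) extends to every even $g\ge 4$. This balanced, $\pm$-paired list sidesteps the only real difficulty, namely that $\sum H\neq 0$ when $k\ge 1$.

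That difficulty is the crux and surfaces precisely for the \emph{odd} values of $g$ when $k\ge 1$ (in particular the base case $g=3$). When $k=0$ one has $\sum H=0$, so the clean template of Theorem~\ref{ell}, namely $A_x=\big(-\psi(x),\,x,\,\varphi(\psi(x)-x)\big)$ with $\psi$ an $H$-permutation and $\varphi$ a $\Delta$-permutation, already yields an $RSM_H(H,3;\Delta)$; choosing $\Delta$ with $\omega(\Delta)=[^\alpha m,\,^\beta n]$ and $\sum\Delta=0$ is possible exactly when $\alpha,\beta\neq 1$ (a lone order-$m$ or order-$n$ element cannot sum to $0$), which is condition~(1), and Corollary~\ref{lem:g=3impliesall} finishes. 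When $k\ge 1$, however, $H$ has a unique involution $(0,2^{k-1}n)$, so $\sum H=(0,2^{k-1}n)\neq 0$: there is no complete mapping and no $H$-permutation $\psi$, and the third column of the template fails to be a permutation. Projecting to the cyclic $2$-part $\Z_{2^k}$ makes the obstruction quantitative: an order-$m$ row-sum projects to $0$ and an order-$2^kn$ one to an odd residue, so $\sum\Sigma=g\sum H$ forces $\beta\equiv g\pmod 2$ when $k=1$ and $\beta$ even when $k\ge 2$. This is exactly the source of the parity and size restrictions in the statement.

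To overcome this for $k=1$ I would use the bespoke $\Delta$-permutations of Theorems~\ref{special_1} and~\ref{special_2} on $H=\Z_m\times\Z_{2n}$: the first supplies a near-$H$-permutation whose difference list is all of $H$ with the troublesome $(0,n)$ deleted and an extra $(0,0)$ inserted (so its sum is $0$), while the prescribed fixed points of the first, respectively the short orbit $(0,0)\mapsto(0,n)\mapsto(0,n+2)$ of the second, are what is needed to repair the third column into a genuine permutation and to steer the row-sum orders to $[^\alpha m,\,^\beta 2n]$ with $\alpha,\beta$ odd and $\ge 3$; the base case is $g=3$ (odd), consistent with the parity computation, and Theorem~\ref{row-sum matrix extension}(1) then covers all odd $g\ge 3$. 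For $k\ge 2$ these special permutations no longer live on the right group, so the $g=3$ base must be built by a direct Hall argument that distributes the single order-$2$ defect $(0,2^{k-1}n)$ across the $\beta\ge 2$ order-$2^kn$ row-sums while keeping all three columns permutations. I expect \emph{this last construction to be the main obstacle}, since it cannot be reduced to Theorem~\ref{ell} (whose groups always have non-cyclic $2$-Sylow, unlike the cyclic $\Z_{2^k}$ here) nor obtained by column extension from a $g=2$ matrix (which would require the absent complete mapping).
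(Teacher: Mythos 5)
Much of your framing agrees with the paper's actual proof: the reduction to the abelian subgroup $2\Gamma=2G\times\{0\}\cong\Z_m\times\Z_{2^kn}$, the $k=0$ case via the two-Hall-permutation template (your rows $\big(-\psi(x),\,x,\,\varphi(\psi(x)-x)\big)$ are, up to column order, exactly the paper's $B_z$), and the parity analysis. Your observation that case (2) forces $\beta\equiv g\pmod 2$ is correct and a fair criticism: the paper's proof builds $B$ only for $g=3$ and its closing appeal to Corollary \ref{lem:g=3impliesall} is loose for $k\geq1$ (where $2\Gamma$ has cyclic nontrivial $2$-Sylow, so only the $S=-S$ extension of Theorem \ref{row-sum matrix extension}, i.e.\ odd $g$, is available); this is harmless downstream because Theorem \ref{row-sum exists} only stacks these $g=3$ blocks with those of Theorem \ref{Gamma-2Gamma} and then extends over the full group $\Gamma$, whose $2$-Sylow is non-cyclic. (One quibble: your ``exactly when $\alpha,\beta\neq1$'' necessity claim at $k=0$ is unjustified when $\gcd(m,n)>1$, but only sufficiency is needed, witnessed by lists such as $\left[\,^a\mu,\,^a(-\mu),\,^b\nu,\,^b(-\nu)\right]$ and odd variants with extra entries $2\mu$, $2\nu$.)

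The genuine gap is that you leave unproven precisely the cases that carry the theorem's content. For $k\geq2$ you declare the $g=3$ base ``the main obstacle'' and stop; for $k=1$ the tools you cite do not suffice, since Theorem \ref{special_2} supplies one fixed difference list (the paper uses it only for the $\beta=1$ case, Theorem \ref{k=1,beta=1}) and cannot realize $[\,^{\alpha}m,\,^{\beta}2n]$ for arbitrary odd $\alpha,\beta\geq3$. The paper's resolution — which is the missing idea — is that the involution defect is absorbed by a \emph{single patched row}, not distributed across the $\beta$ rows: take $\psi_1$ a $\Lambda_1$-permutation with $\Lambda_1=\left[\,^1(0,0)\right]\cup\left[\,^1z\mid z\in 2G,\ z\neq(0,2^kn)\right]$, which has zero sum precisely because the deleted element equals $\sum 2G$; take $\psi_2$ a plain Hall $\Lambda_2$-permutation with $\Lambda_2=\left[\,^a(1,0),\,^a(-1,0),\,^b(0,2),\,^b(0,-2)\right]$ for $k\geq2$, and for $k=1$ the asymmetric odd-count list $\left[\,^1(2,0),\,^a(1,0),\,^{a+2}(-1,0),\,^1(0,4),\,^b(0,2),\,^{b+2}(0,-2)\right]$ with $(\alpha,\beta)=(2a+3,2b+3)$; then set $B_z=\big((z,0)\;\;(-\psi_1(z),0)\;\;(\psi_2(w),0)\big)$ with $w=\psi_1(z)-z$, except at the fixed point $\bar z$ of $\psi_1$, where $w=(0,2^kn)$ — this one substitution restores the third column to a permutation of $2\Gamma$. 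Normalizing $\psi_2$ via Remark \ref{rem:deltapermutation} so that $\psi_2(0,2^kn)=(0,2^kn)+(0,-2)$ (resp.\ $+(0,4)$ when $k=1$), the exceptional row-sum is $(0,2^kn-2)$ (resp.\ $(0,2n+4)$), which still has order $2^kn$ because the halved entry $2^{k-1}n-1$ (resp.\ $n+2$) is coprime to $2^kn$ (resp.\ $2n$); this is exactly where $\beta\geq2$, resp.\ odd $\beta\geq3$, is consumed. In short, the fixed-point repair you sketched for $k=1$ runs uniformly for all $k\geq1$ with ordinary Hall lists, and without carrying it out your proposal establishes only case (1) and the even-$g$ fragment of case (3).
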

\begin{proof} Recall that $2\Gamma= 2G \times \{0\}$ and $2G = \Z_{m}\times 2\Z_{2^{k+1}n}$. 
Let 
\[
(\alpha, \beta)=
\begin{cases}
(2a+3, 2b+3) & \text{if $k=0,1$, and $\alpha,\beta$ are odd,} \\
(2a, 2b) & \text{if $k\neq1$, and $\alpha,\beta$ are even.}
\end{cases}
\]
By Theorem \ref{teohall}
and Remark \ref{rem:deltapermutation}, there are $\Lambda_i$-permutations $\psi_i$ of $2G$, with $i\in\{1,2\}$, such that
\begin{enumerate}
  \item $\Lambda_1= 
        \begin{cases}
        	      2G & \text{if $k=0$},\\
          \left[ \,^1(0,0)\right] \ \cup \
	      \left[^1z\mid z\in 2G, z\neq (0,2^{k}n)\right] & \text{if $k\geq 1$};
        \end{cases} 
        $			
  \item $
        \Lambda_2=
        \begin{cases}
          \left[ 
           \,^{1} (2,0), \,^{a} (1,0), \,^{a+2} (-1,0), \,^{1} (0,4), \,^{b} (0,2), \,^{b+2} (0,-2)
         \right] & \text{if $k=1$}, \\
         \left[ 
           \,^{a} (1,0), \,^{a} (-1,0), \,^{b} (0,2), \,^{b} (0,-2)
         \right]  & \text{if $k\geq2$};
        \end{cases} 
        $
  \item if $k\geq 1$, then $\psi_2(0,2^{k}n) = (0,2^{k}n) +
        \begin{cases}
           (0,4) & \text{if $k=1$},\\
           (0,-2) & \text{if $k\geq2$}.
        \end{cases}
        $
\end{enumerate}
Since $(0,0)\in\Lambda_1$, there exists a pair $\bar{z}\in 2G$ such that $\psi_1(\bar{z}) = \bar{z}$.
Let $B$ denote the $2^{k}mn \times 3$ matrix (with entries from $2\Gamma$) 
whose rows $B_z$, indexed over $2G$, are defined as follows: 
$B_z = \left[ (z,0) \;\;\; (-\psi_1(z),0) \;\;\; (\psi_2(w),0) \right]$ where
\[w=
  \begin{cases}
    (0, 2^{k}n)  & \text{if $z= \bar{z}$ and $k\neq 0$},\\
    \psi_1(z) - z & \text{otherwise}.
  \end{cases}
\]
Note that the columns of $B$ are permutations of $2\Gamma$. Also, 
one can check that for the list $L_B$ of row sums of $B$
we have 
\[
\omega(L_B) = [\,^{\alpha}m, \,^{\beta}2^kn].
\]
The result then follows by applying Lemma \ref{lem:g=3impliesall} to $B$.
\end{proof}

\subsection{The proof of Theorem \ref{row-sum exists} when $k\geq2$}
\label{case2}
Let $\alpha$ and $\beta$ be non-negative integers such that
$\alpha+\beta=2^{k+2}mn$. First, we assume that both $\alpha\neq1$ and $\beta\not\in\{0,1,3\}$ and let $\beta_1=2^kmn -\alpha_1$
$\beta_2=3\cdot 2^kmn -\alpha_2$ where $\alpha_1$ and $\alpha_2$ are defined as follows:
\begin{align*}
  \alpha_1=&
  \begin{cases}
    \min \left\{2^kmn-2, 
            2\left\lfloor 
             \frac{\alpha}{2} 
             \right\rfloor 
         \right\} & \text{if $\alpha\geq 2^kmn+1$,} \\      
    \alpha & \text{if $\alpha\leq 2^kmn$ is even,} \\   
    \alpha-3 & \text{if $3\leq \alpha< 2^kmn$ is odd,} \\       
  \end{cases}
\\
  \alpha_2 =& \alpha-\alpha_1.
\end{align*}
Clearly, $\alpha_1$ and $\beta_1$ are even, hence
Theorem \ref{2Gamma} guarantees the existence of  an $RSM_\Gamma(2\Gamma, g,[\,^{\alpha_1}m,\,^{\beta_1} 2^kn ])$, say $B$.
Furthermore, one can check that $\alpha_2\neq1$ and $\beta_2\neq 1$.
Hence, by Theorem \ref{Gamma-2Gamma} there is 
an $RSM_\Gamma(\Gamma\setminus 2\Gamma, g,[\,^{\alpha_2}m,\,^{\beta_2} 2^kn ])$, say $A$.
Therefore, 
$C=\left[
\begin{array}{c}
 A \\
 B
\end{array}
\right]
$ and Lemma \ref{lem:g=3impliesall} provide the desired RSM. 

It is  left to deal
with the cases $\alpha=1$ and $\beta=1,3$.

Case 1: $\alpha=1$.
Let 
$C = 
\left[\begin{array}{c}
  A \\
  B
\end{array}\right]
$ 
where $A$ is the matrix defined in Theorem \ref{Gamma-2Gamma}, with $a=0$ and $b=3\cdot 2^{k}mn$, 
and
$B$ is the matrix defined in Theorem \ref{2Gamma} with $a=0$ and $b=2^{k}mn$.
It follows that for the list $L_C$ of row sums of $C$ we have
\begin{equation}\label{Gamma:alpha=1}
\omega(L_C) = [\,^{2^{k+2}mn}2^kn].
\end{equation}
By Remark \ref{rem:deltapermutation}, 
we can assume that the permutations $\varphi_1$ and $\varphi_3$ used to define $A$ in Theorem 
\ref{Gamma-2Gamma} satisfy the condition $\varphi_h(z') = z'-(0,2) = (1,0)$, with $z'=(1,2)$, hence
\[a_h(z') = \varphi_h(z') + (0,1) = (1,1),\] 
for $h=1$ or $3$.
Furthermore, since $\Lambda_2 = 
            \left[\,^{2^{k-1}nm} (0,2), \,^{2^{k-1}nm} (0,-2) \right]$, we can assume that  
$\psi_2(0,0) = (0,2)$.
Consider the following matrices:
\[
  U=
  \left[\begin{array}{c}
  A(\varphi,z')_1 \\
  B_{(0,2)} \\
  A(\varphi,z')_3 \\    
\end{array}\right]
=
 \left[\begin{array}{rrr}
   (a_1(z'),0) \;&\;   (b(z'), 1)  \;&\;  (c(z'), 1)    \\[2mm]
      (0,2, 0) \;&\;    (0,-2, 0)  \;&\;  (0,2, 0)     \\[2mm]
    (b(z'), 1) \;&\;   (c(z'), 1)  \;&\;  (a_3(z'),0)  
\end{array}\right]
\]
\[
  U'=
 \left[\begin{array}{rrr}
     (0,2, 0) \;&\;   (c(z'), 1) \;&\;  (c(z'), 1) \\[2mm]
  (a_1(z'),0) \;&\;    (0,-2, 0) \;&\; (a_3(z'),0) \\[2mm]   
   (b(z'), 1) \;&\;   (b(z'), 1) \;&\;    (0,2, 0)
\end{array}\right].
\]
Note that $U$ is a submatrix  of $C$, while each column of $U'$ is a permutation of the corresponding column of $U$. Therefore, by replacing the block $U$ of $C$ with $U'$, we obtain a new $2^{k+2}mn\times 3$ matrix $C'$ whose columns are still permutations of $\Gamma$. 
Denote by $L'$ the list of row sums of $C'$. 
Taking into account \eqref{Gamma:alpha=1} and considering that
\begin{align*}
&\text{
 $\sum U_h = \sum U'_{j} = \pm (0,2,0)$
 has order $2^kn$ for $h=1,2,3$ and $j=1,3$, and
 }  \\
&\text{
 $\sum U'_2 = (2,0,0)$
 has order $m$,
 }  
\end{align*}
we have 
$\omega(L_{C'}) = [\,^1m, \,^{2^{k+2}mn-1}2^kn].$
The result follows by applying Lemma \ref{lem:g=3impliesall} to $C'$.\\

Case 2: $\beta=1,3$.
Let 
$C = 
\left[\begin{array}{c}
  A \\
  B
\end{array}\right]
$ 
where $A$ is the matrix defined in Theorem \ref{Gamma-2Gamma}, with $a=3\cdot 2^{k}mn$ and $b=0$, 
and $B$ is the matrix defined in Theorem \ref{2Gamma} with $a=2^{k}mn-2$ and $b=(\beta+1)/2$.
It follows that for the list $L_C$ of row sums of $C$ we have
\begin{equation}\label{Gamma:beta=1}
\omega(L_C) = [\,^{2^{k+2}mn-\beta-1}m, \,^{\beta+1}2^kn].
\end{equation}
By Remark \ref{rem:deltapermutation}, 
we can assume that the permutations $\varphi_1$ and $\varphi_2$ 
(used in Theorem \ref{Gamma-2Gamma} to define $A$) satisfy the condition $\varphi_h(z') = z' + (1,0) = (1, 2^{k-1}n)$, with $z'=(0, 2^{k-1}n)$, hence
\[a_h(z') = \varphi_h(z') + (0,1) = (1, 2^{k-1}n + 1),\] 
for $h=1$ or $2$. 

Again, by Remark \ref{rem:deltapermutation}, we can assume that the permutation
$\psi_1$ (used in Theorem \ref{2Gamma} to define $B$)
fixes $\ol{z} = (1,0)$.

Consider the following matrices:
\[
  U=
  \left[\begin{array}{c}
  A(\varphi,z')_1 \\[2mm]
  A(\varphi,z')_2 \\[2mm]
  B_{\ol{z}} \\      
\end{array}\right]
=
 \left[\begin{array}{rrr}
  (a_1(z'),0) \;&\;    (b(z'), 1) \;&\;  (c(z'), 1) \\[2mm]
   (c(z'), 1) \;&\;   (a_2(z'),0) \;&\;  (b(z'), 1)  \\[2mm] 
      (1,0,0) \;&\;      -(1,0,0) \;&\;  (0,2^kn-2, 0)     
 \end{array}\right] 
\]
\[
  U'=
 \left[\begin{array}{rrr}
       (1,0,0)  \;&\;    (b(z'), 1)   \;&\;  (b(z'), 1) \\[2mm]
    (c(z'), 1)  \;&\;      -(1,0,0)   \;&\;  (c(z), 1)  \\[2mm] 
   (a_1(z'),0)  \;&\;   (a_2(z'),0)   \;&\;  (0,2^kn-2, 0)     
 \end{array}\right]. 
\]
Note that $U$ is a submatrix  of $C$, while each column of $U'$ is a permutation of the corresponding column of $U$. Therefore, by replacing the block $U$ of $C$ with $U'$, we obtain a new $2^{k+2}mn\times 3$ matrix $C'$ whose columns are still permutations of $\Gamma$. 
Denote by $L'$ the list of row sums of $C'$. Taking into account \eqref{Gamma:beta=1} and
considering that
\begin{align*}
&\text{
 $\sum U'_h = \sum U_j= \pm (1,0,0)$
 has order $m$, for $h,j=1,2$,
}  \\
&\text{
 $\sum U_3 = (0,2^kn-2, 0)$
 has order $2^kn$, and
}  \\
&\text{
 $\sum U'_3 = (2, 0, 0)$
 has order $m$,
} 
\end{align*}
we have 
$\omega(L_{C'}) = [\,^{2^{k+2}mn-\beta} m, \,^{\beta}2^kn].$
The result follows by applying Lemma \ref{lem:g=3impliesall} to $C'$.

\subsection{The proof of Theorem \ref{row-sum exists} when $k=1$}\label{case1}
Let $\alpha$ and $\beta$ be non-negative integers such that
$\alpha+\beta=2^{k+2}mn = 8mn$. 
We first deal with the case where
$\alpha,\beta \not\in\{0,1,2,4\}$, and let $\beta_1=2mn -\alpha_1$
$\beta_2=6mn -\alpha_2$, where $\alpha_1$ and $\alpha_2$ are defined as follows:
\[
  (\alpha_1,\alpha_2) = 
  \begin{cases}
    (3, \alpha-3)        & \text{if $3\leq\alpha\leq 6mn+3$,}\\  
    (2mn-3,\alpha-2mn+3) & \text{if $6mn+4\leq \alpha\leq 8mn-3$}. \\       
  \end{cases}
\]
Clearly, $\alpha_1\geq 3$ and $\beta_1\geq 3$ are odd, hence
Theorem \ref{2Gamma} guarantees the existence of  an $RSM_\Gamma(2\Gamma, g,[\,^{\alpha_1}m,\,^{\beta_1} 2^kn ])$, say $B$.
Furthermore, $\alpha_2\neq1$ and $\beta_2\neq 1$, hence
Theorem \ref{Gamma-2Gamma} provides
an $RSM_\Gamma(\Gamma\setminus 2\Gamma, g,[\,^{\alpha_2}m,\,^{\beta_2} 2^kn ])$, say $A$.
Therefore, 
$C=\left[
\begin{array}{c}
 A \\
 B
\end{array}
\right]
$ and Lemma \ref{lem:g=3impliesall} give the desired RSM.

The case $\alpha=1$ is dealt with in Theorems \ref{k=1,beta=1} and \ref{k=1,beta=1,n=1}, while the case $\beta=1$ is proven in Theorems \ref{k=1,alpha=1} and \ref{k=1,alpha=1,n=1}.

\begin{theorem}\label{k=1,beta=1}
Let $m\geq1$ and $n\geq3$ be odd integers and let $k=1$. Then a
$RSM_\Gamma(\Gamma,g;[^{8mn-1} m,   \;^{1} 2n])$ exists.
\end{theorem}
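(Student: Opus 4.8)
The plan is to construct the matrix for $g=3$ and then invoke Corollary~\ref{lem:g=3impliesall}. First I would verify that the reduction applies. Since $k=1$, the group $\Gamma\simeq Dih(\Z_{m}\times\Z_{4n})$ has order $8mn$, and its Sylow $2$-subgroup is generated by the copy of $\Z_4$ sitting inside $\Z_{4n}$ together with the inverting involution $\epsilon=(-1,-1)$; this is the dihedral group of order $8$, which is non-abelian and in particular non-cyclic. Hence Theorem~\ref{teo:completemappings} guarantees a complete mapping of $\Gamma$, and Corollary~\ref{lem:g=3impliesall} reduces the whole statement to producing a single $RSM_\Gamma(\Gamma,3;[^{8mn-1}m,\,^{1}2n])$.

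For $g=3$ I would split $\Gamma=(\Gamma\setminus2\Gamma)\sqcup2\Gamma$ and stack $C=\left[\begin{array}{c}A\\ B\end{array}\right]$. Here $A$ is the $6mn\times3$ matrix of Theorem~\ref{Gamma-2Gamma} taken entirely in order $m$, that is with $\alpha_2=6mn$, $\beta_2=0$ (admissible, since $\beta_2\neq1$). The matrix $B$ over $2\Gamma\simeq\Z_m\times2\Z_{4n}$ I would build from the $B$-template $B_z=\big[(z,0)\ (-\psi_1(z),0)\ (\psi_2(w),0)\big]$ of Theorem~\ref{2Gamma}, but feeding in the \emph{special} permutations: $\psi_1$ from Theorem~\ref{special_1} (which realises the near-identity difference list with its two prescribed fixed points) and $\psi_2$ from Theorem~\ref{special_2}. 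This is exactly where the hypothesis $n\geq3$ enters, through the element $(0,n-2)$ and the prescribed images $\psi_2(0,0)=(0,n)$, $\psi_2(0,n)=(0,n+2)$. A direct bookkeeping of the row-sums, using that overriding one fixed point replaces the difference $(0,2)$ (order $n$) by the value $(0,n+2)$ (order $2n$), yields $\omega(L_B)=[^{2mn-3}m,\,^{2}2n,\,^{1}2]$, so that $\omega(L_C)=[^{8mn-3}m,\,^{2}2n,\,^{1}2]$.

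The repair step is the heart of the argument. I would isolate a $3\times3$ block $U$ made of the stray order-$2$ row of $B$, one of its order-$2n$ rows, and one order-$m$ row taken from an $A$-block, and replace it by a column-wise rearrangement $U'$ all of whose three new row-sums have order $m$. Since $U'$ uses, in each column, the same three entries as $U$, every column of the global matrix stays a permutation of $\Gamma$; and the net effect on the spectrum is precisely $\{\,^12,\,^12n,\,^1m\}\mapsto\{\,^3m\}$, i.e.\ $+2$ rows of order $m$, $-1$ of order $2n$, $-1$ of order $2$, landing exactly on $[^{8mn-1}m,\,^{1}2n]$. The new orders would be read off from Lemma~\ref{A(x)rowsum} together with a short explicit computation, after pinning the relevant values of $\psi_1,\psi_2$ and of the $\varphi_h$ by Remark~\ref{rem:deltapermutation}, exactly in the style of Case~1 and Case~2 of Section~\ref{case2}. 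Corollary~\ref{lem:g=3impliesall} then promotes the $g=3$ matrix to all $g\geq3$.

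The main obstacle, and the reason a clean split cannot work, is that $2\Gamma\simeq\Z_m\times2\Z_{4n}$ is abelian with every column of $B$ forced to sum to the unique involution $(0,2n)$; consequently the row-sums of $B$ by themselves can be neither all of order $m$ nor equal to $[^{2mn-1}m,\,^{1}2n]$ when $\gcd(m,n)=1$, since in that case an order-$m$ row-sum must have trivial $2\Z_{4n}$-component. The repair block must therefore straddle $2\Gamma$ and $\Gamma\setminus2\Gamma$: borrowing the entries of an order-$m$ $A$-row (which carry nontrivial $\Z_2$-components) is what allows the two ``bad'' abelian sums to be absorbed and the modified row-sums to leave $2\Gamma$. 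Designing $U'$ so that it cancels the order-$2$ contribution \emph{and} exactly one order-$2n$ contribution simultaneously, thereby reaching the odd value $\beta=1$ rather than an even $\beta$, while respecting the column-permutation constraint, is the delicate point, and it is what makes the tailored permutation of Theorem~\ref{special_2}, valid only for $n\geq3$, indispensable; the degenerate case $n=1$ is handled separately in Theorem~\ref{k=1,beta=1,n=1}.
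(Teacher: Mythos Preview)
Your global strategy coincides with the paper's: reduce to $g=3$, stack the all-order-$m$ matrix $A$ over the $B$-template fed with the special permutations of Theorems~\ref{special_1} and~\ref{special_2}, reach $\omega(L_C)=[^{8mn-3}m,\,^{2}2n,\,^{1}2]$, then repair. The gap is the repair: a $3\times3$ block cannot do it. Say $U$ consists of one $A$-row, e.g.\ $A(\varphi,\gamma)_1=[(a_1,0),(b,1),(c,1)]$, together with the order-$2$ $B$-row and one order-$2n$ $B$-row. Any row of $U'$ that receives exactly one of $(b,1),(c,1)$ sums to an element of $G\times\{1\}$, hence of order $2$; so both must land in a single row of $U'$. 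Writing $(\xi,0)$ for that row's column-$1$ entry, its sum is $(\xi+b-c,0)=(\xi-\gamma-(0,1),0)$, whose $\Z_{4n}$-coordinate is odd (and thus of order divisible by $4$) unless $\xi=a_1(\gamma)$. The remaining two rows of $U'$ are therefore formed entirely from the six $B$-entries, all lying in the abelian subgroup $2\Gamma$; their row-sums add, in $2G$, to the sum of all six $B$-entries, namely $(0,2n)+(0,2n\pm4)=(0,\pm4)$. When $\gcd(m,n)=1$ every element of $2G$ of order $m$ has trivial $2\Z_{4n}$-component, so two such elements cannot sum to $(0,\pm4)$: at least one residual row fails. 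This obstruction is independent of which $A$-row or which order-$2n$ $B$-row you choose.

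The paper uses a $6\times3$ repair instead: four $A$-rows $A(\varphi,\gamma_i)_{h_i}$, with the $\gamma_i$ chosen explicitly depending on $m\bmod 4$, together with the two defective $B$-rows. The extra $a_h(\gamma_i)$ entries, which lie in $\Z_m\times(2\Z_{4n}+1)\times\{0\}$, are precisely what allow the ``$B$-type'' rows of $U'$ to absorb the residual $(0,\pm4)$ and acquire order-$m$ sums; a single $A$-row supplies only one such entry, which is not enough.
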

\begin{proof} Recall that $2\Gamma = 2G\times \{0\}$ where $2G = \Z_{m}\times 2\Z_{4n}$. 

By Theorem \ref{teohall}, there exists a $\Delta_h$-permutation 
$\varphi_h$ of $2G$, where
$\Delta_h=
    \left[ 
           \,^{2mn} (1,{0})
    \right]
$, for every $h\in\{1,2,3\}$.
Let $A = \left[A(\varphi, z)\right]$ be the row block-matrix whose blocks are 
the $3\times 3$ matrices $A(\varphi, z)$ for $z\in  2G$. 
Note that $A$ is a
$6mn\times 3$ matrix whose columns are permutations of $\Gamma\setminus 2\Gamma$.
Also, letting $L_A$ be the list of row-sums of $A$, 
by Lemma \ref{A(x)rowsum} we have that 
\[
\omega(L_A) = \omega(\Delta_1 \, \cup\, \Delta_2 \, \cup\, \Delta_3) = 
\omega(\left[\,^{6mn} (1,0,0) \right])= 
\left[\,^{6mn}m\right].
\]

By Theorems \ref{special_1} and \ref{special_2},
there are $\Lambda_i$-permutation $\psi_i$ of $2G$, with $i\in\{1,2\}$, such that
\begin{enumerate}
  \item $\Lambda_1= \left[ \,^1(0,0)\right] \ \cup \
	     \left[^1\gamma\mid \gamma\in 2G, \gamma\neq (0,2n)\right]$,	
  \item $
        \Lambda_2=
          \left[ 
           \,^{2mn-6} (1,0), \,^{3} (2,0), \,^{1} (0,4),  \,^{1} (0,2n-4), \,^{1} (0,2n)
         \right] 
        $,        
  \item $\psi_1$ fixes $(0, 0)$ and
        $\overline{\gamma}=\left(-\frac{m-1}{2},  mn+1\right)$, 
  \item $\psi_2(0,0)=(0,2n), \psi_2(0,2n) = (0,2n+4)$.
\end{enumerate}
Let $B$ denote the $2^{k}mn \times 3$ matrix whose rows $B_\gamma$, indexed over $2G$, are defined as follows: 
$
B_\gamma = \left[(\gamma,0) \;\;\; (-\psi_1(\gamma),0) \;\;\; (\psi_2(\delta),0)
 \right]
$ 
 where
\[\delta=
  \begin{cases}
    (0, 2n)  & \text{if $\gamma= (0,0)$},\\
    \psi_1(\gamma) - \gamma & \text{otherwise}.
  \end{cases}
\]
Note that the columns of $B$ are permutations of $2\Gamma$. Also, 
one can check that for the list $L_B$ of row sums of $B$
we have 
\[
  L_B = \left[ 
           \,^{2mn-6} (1,0,0), \,^{3} (2,0,0), 
           \,^{1} (0,2n+4,0),  \,^{1} (0,2n-4,0), \,^{1} (0,2n,0)
         \right] 
\]
hence, $\omega(L_B) = [\,^{2mn-3}m, \,^{2}2n, \,^{1}2]$.
It follows that each column of 
$C=
\left[\begin{array}{c}
  A \\
  B \\ 
\end{array}\right]
$
is a permutation of $\Gamma$. Clearly, the list of row sums of $C$
is $L_C = L_A\,\cup\,L_B$ and $\omega(L_C) = [\,^{8mn-3}m, \,^{2}2n, \,^{1}2]$.

Now, let $\mu\in\{\pm1\}$ such that $\mu\equiv m \pmod{4}$, and take the following four elements 
$\gamma_i$ of $2G$, whose second entry depends on $\mu$:\\

$\gamma_1 =
\begin{cases}
   \left(\frac{m-1}{2}, 3n-1 \right),    & \text{if $m=1$ and $n=3$},\\
   \left(\frac{m-1}{2}, n+\mu-2 \right), & \text{otherwise},
\end{cases}
$
\\

$\gamma_2 = 
\begin{cases}
   \left(-\frac{m-1}{2}, 3n-1 \right),    & \text{if $m=1$ and $n=3$},\\
   \left(-\frac{m-1}{2}, n+\mu-2 \right), & \text{otherwise},
\end{cases}
$
\\

$
\gamma_3 = \left(-\frac{m-1}{4}, (3+\mu)n- \mu-1\right),
$\\

$
\gamma_4 = \left(\frac{m-1}{4}, (3-\mu)n-\mu-3\right).
$\\

Consider the submatrix 
$U=
\left[\begin{array}{c}
  S \\
  T \\ 
\end{array}\right]
$
of 
$C=
\left[\begin{array}{c}
  A \\
  B \\ 
\end{array}\right]
$, where $S$ and $T$ are submatrices of $A$ and $B$, respectively:
\[S=
 \left[\begin{array}{ccc}
 (a_1(\gamma_1), 0)   \;&\;  (b(\gamma_1), 1)    \;&\;  (c(\gamma_1), 1) \\[2mm]
   (c(\gamma_2), 1)   \;&\;  (a_2(\gamma_2), 0)  \;&\;  (b(\gamma_2), 1) \\[2mm] 
 (a_1(\gamma_3), 0)   \;&\;  (b(\gamma_3), 1)    \;&\;  (c(\gamma_3), 1) \\[2mm]
   (c(\gamma_4), 1)   \;&\;  (a_2(\gamma_4), 0)  \;&\;  (b(\gamma_4), 1) \\[2mm]    
 \end{array}\right], 
\]
\[
 T=
 \left[\begin{array}{ccc}
  (0,0,0) \;&\;    (0,0, 0) \;&\;  (0,2n+4,0) \\[2mm]
  (\bar{\gamma},0) \;&\;     -(\bar{\gamma},0) \;&\;  (0,2n, 0)   \\[2mm] 
 \end{array}\right].
\]
Considering that $\gamma_1\neq \gamma_3$ and $\gamma_2\neq \gamma_4$, then $S$ is well-defined, that is, $S$ contains four distinct rows of $A$. 
We denote by $C'$ the matrix obtained from $C$ by replacing $U$ with the matrix
$U'$ defined below
\[U'=
 \left[\begin{array}{ccc}
            (0,0,0)  \;&\;   (b(\gamma_1), 1)    \;&\;  (b(\gamma_2), 1) \\[2mm]
   (c(\gamma_2), 1)  \;&\;          (0,0, 0)     \;&\;  (c(\gamma_1), 1) \\[2mm] 
   (\bar{\gamma},0)      \;&\;   (b(\gamma_3), 1)    \;&\;  (b(\gamma_4), 1) \\[2mm]
   (c(\gamma_4), 1)  \;&\;        -(\bar{\gamma},0) \;&\;  (c(\gamma_3), 1) \\[2mm]    
  (a_1(\gamma_1), 0) \;&\;   (a_2(\gamma_2), 0)  \;&\;  (0,2n-2\mu+2,0) \\[2mm]
  (a_1(\gamma_3), 0) \;&\;   (a_2(\gamma_4), 0)  \;&\;  (0,2n+2\mu+2, 0)   \\[2mm] 
 \end{array}\right]. 
\]
Note that each column of $U'$ is a permutation of the corresponding column of $U$.
Therefore, each column of $C'$ is a permutation of $\Gamma$.

Considering that
\begin{align*}
&\text{
 $\sum S_h$ has order $m$, for every $h=1,\ldots, 4$,
}  \\
&\text{
 $\sum T_1 = (0,2n+4, 0)$
 has order $2n$,
}  \\
&\text{
 $\sum T_2 = (0,2n, 0)$
 has order $2$, and
}  \\
&\text{
 $\sum U'_h$ has order $m$, for every $h=1,\ldots, 6$
} 
\end{align*}
and denoting by $L_{C'}$ the list of row sums of $C'$, we have that
$\omega(L_{C'}) = [\,^{8mn-1} m, \,^{1}2n].$
The result follows by applying Lemma \ref{lem:g=3impliesall} to $C'$.
\end{proof}

\begin{theorem}\label{k=1,beta=1,n=1}
Let $m\geq1$ be an odd integer and let $k=n=1$. Then a
$RSM_\Gamma(\Gamma,g;[^{8m-1} m,   \;^{1} 2])$ exists.
\end{theorem}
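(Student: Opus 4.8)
The plan is to reduce to three columns and then realise the matrix as a vertical stack over the two cosets of $2\Gamma$. Since $\Gamma\simeq Dih(\Z_m\times\Z_4)$ is solvable and its Sylow $2$-subgroup is $Dih(\Z_4)$, which is non-cyclic, Corollary \ref{lem:g=3impliesall} reduces the problem to constructing an $RSM_\Gamma(\Gamma,3;[^{8m-1}m,\,^{1}2])$. I would write this as $C=\left[\begin{array}{c}A\\ B\end{array}\right]$, where $A$ is an $RSM_\Gamma(\Gamma\setminus2\Gamma,3;\cdot)$ and $B$ is an $RSM_\Gamma(2\Gamma,3;\cdot)$; because the columns of $A$ and $B$ are permutations of the complementary sets $\Gamma\setminus2\Gamma$ (of size $6m$) and $2\Gamma$ (of size $2m$), the columns of $C$ are permutations of $\Gamma$ and the row-sum list of $C$ is the concatenation $L_A\cup L_B$. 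The target then splits as $\omega(L_A)=[^{6m}m]$ and $\omega(L_B)=[^{2m-1}m,\,^{1}2]$.

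For $A$ I would copy the opening of the proof of Theorem \ref{k=1,beta=1}: by Theorem \ref{teohall} there are $\Delta_h$-permutations $\varphi_h$ of $2G=\Z_m\times2\Z_4$ with $\Delta_h=[^{2m}(1,0)]$ (note $\sum\Delta_h=(2m,0)=(0,0)$), and $A$ is the block-matrix of the $A(\varphi,z)$, $z\in 2G$. Lemma \ref{A(x)rowsum} shows every row-sum equals $\pm((1,0),0)$, an element of order $m$, so $\omega(L_A)=[^{6m}m]$; as in Theorem \ref{Gamma-2Gamma} the columns are permutations of $\Gamma\setminus2\Gamma$. (For $m=1$ everything collapses to the identity, which still has order $1=m$.)

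The real content is $B$, and this is where the hypothesis $n=1$ does the work. I would use the $2\Gamma$-recipe of Theorem \ref{2Gamma}: set $B_\gamma=[(\gamma,0)\ \ (-\psi_1(\gamma),0)\ \ (\psi_2(\delta),0)]$ with $\delta=(0,2)$ if $\gamma=(0,0)$ and $\delta=\psi_1(\gamma)-\gamma$ otherwise, where $\psi_1$ is the permutation of $2G$ furnished by Theorem \ref{special_1} (which is valid at $n=1$: it fixes $(0,0)$ and has defect list $[^{1}(0,0)]\cup[^{1}\gamma\mid\gamma\in2G,\ \gamma\ne(0,2)]$). Exactly as in Theorem \ref{2Gamma}, this choice makes $\delta$ range over all of $2G$, so the three columns are permutations of $2\Gamma$, and the row-sum list equals the defect list of $\psi_2$ with the entry at $\delta=(0,2)$ replaced by $\psi_2(0,2)$. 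I then take $\psi_2$ to be a $\Lambda_2$-permutation of $2G$ with $\Lambda_2=[^{1}(0,0),\,^{2m-2}(1,0),\,^{1}(2,0)]$: this list sums to $(2m,0)=(0,0)$ so it exists by Theorem \ref{teohall}, and since $(0,0)\in\Lambda_2$, Remark \ref{rem:deltapermutation} lets me assume $\psi_2$ fixes $(0,2)$. As $m$ is odd, $(1,0)$ and $(2,0)$ both have order $m$, while the fixed point contributes the replaced value $\psi_2(0,2)=(0,2)$, of order $2$. Hence the $2m-1$ rows with $\gamma\ne(0,0)$ yield order-$m$ row-sums and the single row $\gamma=(0,0)$ yields $(0,2,0)$, giving $\omega(L_B)=[^{2m-1}m,\,^{1}2]$, as required.

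Stacking gives $\omega(L_C)=[^{8m-1}m,\,^{1}2]$, and Corollary \ref{lem:g=3impliesall} promotes this to all $g\ge3$. The decisive observation — and the reason this cannot be folded into Theorem \ref{k=1,beta=1} — is that the override value $(0,2n)$ always has order $2$ in $G$, so it can serve directly as the lone non-$m$ factor only when the target singleton order $2n$ equals $2$, i.e.\ when $n=1$; for $n\ge3$ one instead needs a row-sum of order $2n>2$, which no fixed point of $\psi_2$ can provide, forcing the local column-swap used there. I expect the only genuine obstacle to be bookkeeping the degeneracies at $m=1$ (where $2m-2=0$ and $(2,0)=(0,0)$) and at $n=1$, checking that the prescribed defect list still sums to zero and that no orders collapse unexpectedly; the existence of $\psi_2$ ultimately rests only on the trivial fact that $2m-1$ units of $\Z_m$ can sum to $0$, witnessed by $[^{2m-2}1,\,^{1}2]$.
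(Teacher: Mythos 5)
Your proposal is correct, and its skeleton coincides with the paper's own proof: stack an $RSM_\Gamma(\Gamma\setminus2\Gamma,3;[^{6m}m])$ (the paper simply cites Theorem \ref{Gamma-2Gamma} for this; your hand-built block matrix with $\Delta_h=[^{2m}(1,0)]$ is that same construction unwound) on top of an $RSM_\Gamma(2\Gamma,3;[^{2m-1}m,\,^{1}2])$ built by the recipe $B_\gamma=[(\gamma,0)\;\;(-\psi_1(\gamma),0)\;\;(\psi_2(\delta),0)]$, then apply Corollary \ref{lem:g=3impliesall}. Where you genuinely diverge is in the bookkeeping for $B$: the paper takes $\Lambda_2=[^{2m-6}(1,0),\,^{3}(2,0),\,^{2}(0,2),\,^{1}(0,0)]$, requires $\psi_2$ to fix $(2,0)$ and swap $(0,0)\leftrightarrow(0,2)$ (a property it asserts as ``easy to see,'' going beyond what Remark \ref{rem:deltapermutation} alone provides), and overrides $w$ at \emph{two} special rows $z_0,z_1$ (a second fixed point of $\psi_1$, and the row with defect $(2,0)$). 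Your choice $\Lambda_2=[^{1}(0,0),\,^{2m-2}(1,0),\,^{1}(2,0)]$ with $\psi_2$ fixing $(0,2)$ --- justified directly by Remark \ref{rem:deltapermutation} --- needs only the single override at $\gamma=(0,0)$ already built into Theorem \ref{2Gamma}, and your order count is right: for $\gamma\neq(0,0)$ the values $\delta$ sweep $2G\setminus\{(0,2)\}$ exactly once, the corresponding defects form $[^{2m-2}(1,0),\,^{1}(2,0)]$, all of order $m$ since $m$ is odd, and the overridden row contributes $\psi_2(0,2)=(0,2)$, of order $2$. A side benefit of your variant is that it degenerates gracefully at $m=1$ (where $\Lambda_2$ collapses to $[^{2}(0,0)]$ and $\psi_2$ is the identity), whereas the paper's list has the formally negative multiplicity $2m-6$ there. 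In short: same decomposition, same key lemmas (Theorems \ref{teohall} and \ref{A(x)rowsum}, Remark \ref{rem:deltapermutation}, Corollary \ref{lem:g=3impliesall}), but a leaner and slightly more robust realization of the $2\Gamma$ block; your closing remark about why the plain override cannot work for $n\geq3$ (since $(0,2n)$ has order $2$, never $2n$) correctly identifies why Theorem \ref{k=1,beta=1} needs its local column-swap.
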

\begin{proof} Recall that $2\Gamma = 2G\times \{0\}$ where $2G = \Z_{m}\times 2\Z_{4}$. 
Also, $\Gamma=\left(\Gamma\setminus2\Gamma\right)\cup 2\Gamma$.
By Theorem \ref{Gamma-2Gamma}, there exists a  $RSM_\Gamma(\Gamma\setminus2\Gamma,g;[^{6m} m])$. Therefore, it is left 
to show that a $RSM_\Gamma(2\Gamma,g;[^{2m-1} m,   \;^{1} 2])$ exists.

By Theorem \ref{teohall}
and Remark \ref{rem:deltapermutation}, there are $\Lambda_i$-permutations $\psi_i$ of $2\Gamma$, 
with $i\in\{1,2\}$, such that
\begin{enumerate}
  \item $\Lambda_1= \left[ \,^1(0,0)\right] \ \cup \
	     \left[^1z\mid z\in 2G, z\neq (0,2)\right]$, with $\psi_1(0,0)=(0,0)$, and
  \item $
        \Lambda_2=
          \left[ 
            \,^{2m-6} (1,0),   \,^{3} (2,0),   \,^{2} (0,2),   \,^{1} (0,0)
         \right]
        $.
\end{enumerate}
It is easy to see that $\psi_2$ can be chosen so that it fixes $(2,0)$ and swaps $(0,0)$ and $(0,2)$.

Denote by $z_0, z_1$ the elements of $2G$, with $z_0\neq(0,0)$, such that
\[
 \psi_1(z_0) = z_0,  \,\,\,\text{and}\,\,\,  \psi_1(z_1) - z_1 = (2,0),
\] 
and let $B$ denote the $2^{k}mn \times 3$ matrix (with entries from $2\Gamma$) 
whose rows $B_z$, indexed over $2G$, are defined as follows: 
$B_z = \left[ (z,0) \;\;\; (-\psi_1(z),0) \;\;\; (\psi_2(w),0) \right]$ where
\[w=
  \begin{cases}
    (2, 0)  & \text{if $z= {z_0}$},\\
    (0, 2)  & \text{if $z= {z_1}$},\\    
    \psi_1(z) - z & \text{otherwise}.
  \end{cases}
\]
One can check that the columns of $B$ are permutations of $2\Gamma$, and
for the list $L_B$ of row sums of $B$ we have 
\[
\omega(L_B) = [\,^{2m-1}m, \,^{1}2].
\]
The result then follows by applying Lemma \ref{lem:g=3impliesall} to $C$.
\end{proof}

\begin{theorem}\label{k=1,alpha=1}
Let $m\geq1$ and $n\geq3$ be odd integers, and let $k=1$. 
Then a
$RSM_\Gamma(\Gamma,g;[^{1} m,   \;^{8m-1} 2])$ exists.
\end{theorem}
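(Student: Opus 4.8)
The plan is to build the desired $RSM_\Gamma(\Gamma,g;[^1 m, \,^{8m-1} 2n])$ by exactly mirroring the strategy of Theorem \ref{k=1,beta=1}, but with the roles of $m$ and $2n$ essentially interchanged: here we want almost all row-sums to have order $2n$ and only a single one to have order $m$. Recall that $\Gamma = \left(\Gamma\setminus 2\Gamma\right)\cup 2\Gamma$ with $|\Gamma\setminus2\Gamma| = 3\cdot 2^k mn = 6mn$ and $|2\Gamma| = 2^k mn = 2mn$ when $k=1$. Since the target list has $\alpha=1$ and $\beta=8mn-1$, I would split the contribution between the two cosets and then perform a local surgery (a block-swap exactly as in the $U\to U'$ replacement) to convert a small number of order-$m$ sums into order-$2n$ sums while freeing up a single order-$m$ sum.

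First I would handle the coset $\Gamma\setminus2\Gamma$. As in Theorem \ref{k=1,beta=1}, I take $\Delta_h$-permutations $\varphi_h$ of $2G=\Z_m\times 2\Z_{4n}$, but now I want the row-sums coming from the $A(\varphi,z)$ blocks to contribute order $2n$ rather than order $m$; by Lemma \ref{A(x)rowsum} the row-sum of $A(\varphi,(x,y))_h$ is $\pm(\varphi_h(x,y)-(x,y),0)$, so I should choose each $\Delta_h$ to consist of elements of the form $(0,\pm 2)$ (which have order $2n$ in $\Z_m\times 2\Z_{4n}$) rather than $(\pm1,0)$. Concretely I want $\omega(\Delta_h)=[^{2mn}2n]$ for each $h$, giving $\omega(L_A)=[^{6mn}2n]$ for the corresponding $6mn\times 3$ matrix $A$ whose columns are permutations of $\Gamma\setminus2\Gamma$. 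This is the natural ``dual'' of the construction in the previous theorem.

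Next I would build the $2\Gamma$-block $B$ using special $\Delta$-permutations analogous to Theorems \ref{special_1} and \ref{special_2}, choosing them so that $\omega(L_B)$ produces almost entirely order-$2n$ sums with exactly one order-$m$ sum left over after the surgery, so that the combined matrix $C=\left[\begin{array}{c}A\\ B\end{array}\right]$ has $\omega(L_C)$ close to the target and containing a small correctable discrepancy. I then perform the local block-swap: I isolate a submatrix $U$ of $C$ (spanning a few rows of $A$ and of $B$), and replace it by a column-wise permutation $U'$ chosen so that the swap converts the stray order-$m$ contributions into order-$2n$ contributions, leaving precisely one row-sum of order $m$. The arithmetic that makes this work relies, as in Theorem \ref{k=1,beta=1}, on choosing the involved elements $\gamma_i$ with a parity parameter $\mu\equiv m\pmod 4$ so that the second coordinates add up correctly modulo $4n$; the explicit choices of $\gamma_i$ would need to be tuned for this $\alpha=1$ target. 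Finally, applying Corollary \ref{lem:g=3impliesall} (equivalently Lemma \ref{lem:g=3impliesall}) to $C'$ extends the $3$-column matrix to all $g\geq 3$, completing the proof.

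The main obstacle I anticipate is the bookkeeping in the local surgery: I must simultaneously ensure that (i) each column of $U'$ is a genuine permutation of the corresponding column of $U$ (so that $C'$ still has columns permuting $\Gamma$), (ii) the rows chosen for $U$ are distinct, which forces inequality constraints among the $\gamma_i$ (the analogue of $\gamma_1\neq\gamma_3$, $\gamma_2\neq\gamma_4$ in Theorem \ref{k=1,beta=1}), and (iii) the resulting row-sum orders come out as exactly $[^1 m, \,^{8m-1}2n]$ rather than leaving two or zero stray order-$m$ elements. Because the small cases (notably $m=1$ or small $n$) tend to violate the distinctness constraints, I expect the construction to require a separate treatment of the degenerate case $n=1$, which is presumably why the excerpt splits $\beta=1$ into Theorems \ref{k=1,alpha=1} and \ref{k=1,alpha=1,n=1}; the present statement assumes $n\geq 3$, so I would only need the generic surgery here, but I would still verify that $n\geq 3$ gives enough room for the $\gamma_i$ to be distinct.
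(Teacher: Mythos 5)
Your plan coincides with the paper's proof in every structural respect: a block $A$ over $\Gamma\setminus 2\Gamma$ built from $\Delta_h$-permutations of $2G$ with $\omega(\Delta_h)=[^{2mn}2n]$ (the paper takes $\Delta_h=[^{2mn}(0,2)]$, which forces $\varphi_h(\gamma)=\gamma+(0,2)$), giving $\omega(L_A)=[^{6mn}2n]$; a block $B$ over $2\Gamma$ built from a near-identity permutation coming from Theorem \ref{special_1}; a local $U\to U'$ surgery on a handful of rows of $C=\left[\begin{smallmatrix}A\\ B\end{smallmatrix}\right]$ subject to exactly the three verification obligations you list; the final appeal to Lemma \ref{lem:g=3impliesall}; and even the correct prediction that $n=1$ fails the distinctness constraints and is handled by the separate Theorem \ref{k=1,alpha=1,n=1}. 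So this is the same route as the paper, not a different one.

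The shortfall is that the one step carrying the theorem's content --- actually exhibiting the surgery --- is deferred (``the explicit choices of $\gamma_i$ would need to be tuned''), and your description of what the surgery must repair is slightly off. Because $\Lambda_1$ from Theorem \ref{special_1} necessarily duplicates $(0,0)$ and omits $(0,2n)$, the block $B$ cannot have all row-sums of order $2n$: its list of sums is $[^{2mn-1}(0,2,0),\ ^{1}(0,2n+2,0)]$, and the stray element $(0,2n+2)$ has order $n$ in $2\Z_{4n}$ (for odd $n$, $\gcd(2n+2,4n)=4$) --- neither $2n$ nor $m$. Hence the surgery must do two things simultaneously: absorb this stray by pairing $(0,2n+2,0)$ with two $a$-entries so the row-sum becomes $(0,2,0)$ (the paper uses $\gamma_3,\gamma_4$ with $\gamma_3+\gamma_4=(0,6n-6)$, chosen by a case split on $mn\bmod 4$ --- not on $\mu\equiv m\pmod 4$, which belongs to Theorem \ref{k=1,beta=1}), and manufacture the unique order-$m$ sum from scratch via the row $\big[(a_1(\gamma_1),0)\ (a_2(\gamma_2),0)\ (0,2,0)\big]$ with $\gamma_1=(1,2n-2)$, $\gamma_2=(1,2n-6)$, whose sum is $(2,0,0)$ of order $m$. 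Two further small corrections: for $\psi_2$ plain Hall (Theorem \ref{teohall}) with $\Lambda_2=[^{2mn}(0,2)]$ suffices, so no analogue of Theorem \ref{special_2} is needed here; and the target multiplicity is $8mn-1$, not $8m-1$ (a typo you inherited from the statement). None of this contradicts your framework --- it is exactly the tuning you postponed --- but since the theorem \emph{is} the construction, the proposal as written is an accurate outline of the paper's argument rather than a complete proof.
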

\begin{proof} Recall that $2\Gamma = 2G\times \{0\}$ where $2G = \Z_{m}\times 2\Z_{4n}$. 

By Theorem \ref{teohall}, there exists a $\Delta_h$-permutation 
$\varphi_h$ of $2G$, where
$\Delta_h=
    \left[ 
           \,^{2mn} (0, 2)
    \right]
$, for every $h\in\{1,2,3\}$.
Let $A = \left[A(\varphi, z)\right]$ be the row block-matrix whose blocks are 
the $3\times 3$ matrices $A(\varphi, z)$ for $z\in  2G$. 
Note that $A$ is a
$6mn\times 3$ matrix whose columns are permutations of $\Gamma\setminus 2\Gamma$.
Also, letting $L_A$ be the list of row-sums of $A$, 
by Lemma \ref{A(x)rowsum} we have that 
\[
\omega(L_A) = \omega(\Delta_1 \, \cup\, \Delta_2 \, \cup\, \Delta_3) = 
\omega(\left[\,^{6mn} (0,2,0) \right]) = 
\left[\,^{6mn}2n\right].
\]

By Theorems \ref{special_1} and \ref{teohall},
there are $\Lambda_i$-permutations $\psi_i$ of $2G$, with $i\in\{1,2\}$, such that
\begin{enumerate}
  \item $\Lambda_1= \left[ \,^1(0,0)\right] \ \cup \
	     \left[^1\gamma\mid \gamma\in 2G, \gamma\neq (0,2n)\right]$,	
  \item $
        \Lambda_2=
          \left[ 
           \,^{2mn} (0,2)
         \right] 
        $,        
  \item $\psi_1$ fixes $(0, 0)$ and
        $\overline{\gamma}=\left(-\frac{m-1}{2},  mn+1\right)$.
\end{enumerate}
Let $B$ denote the $2^{k}mn \times 3$ matrix whose rows $B_\gamma$, indexed over $2G$, are defined as follows: 
$
B_\gamma = \left[(\gamma,0) \;\;\; (-\psi_1(\gamma),0) \;\;\; (\psi_2(\delta),0)
 \right]
$ 
 where
\[\delta=
  \begin{cases}
    (0, 2n)  & \text{if $\gamma= (0,0)$},\\
    \psi_1(\gamma) - \gamma & \text{otherwise}.
  \end{cases}
\]
Note that the columns of $B$ are permutations of $2\Gamma$. Also, 
one can check that for the list $L_B$ of row sums of $B$
we have 
\[
  L_B = \left[ 
           \,^{2mn-1} (0,2,0), \,^{1} (0,2n+2,0)
         \right],
\]
hence, $\omega(L_B) = [\,^{2mn-1}2n, \,^{1}u]$, where $u$ is the order of $2n+2$ in $2\Z_{4n}$.
It follows that each column of 
$C=
\left[\begin{array}{c}
  A \\
  B \\ 
\end{array}\right]
$
is a permutation of $\Gamma$. Clearly, the list of row sums of $C$
is $L_C = L_A\,\cup\,L_B$ and $\omega(L_C) = [\,^{8mn-1}2n, \,^{1}u]$.

Now, take the following four elements $\gamma_i$ of $2G$:\\

$
\gamma_1 = \left(1, 2n-2\right),
$\\

$
\gamma_2 = \left(1, 2n-6\right),
$\\

$
\gamma_3 = 
\begin{cases}
  \left(-\frac{m-1}{2}, 3n-3\right),  & 	\text{if $mn\equiv 1\pmod{4}$},\\
  \left(-\frac{m-1}{2}, 3n-5\right), & 	\text{if $mn\equiv 3\pmod{4}$},\\ 
\end{cases}  
$\\

$
\gamma_4 = 
\begin{cases}
  \left(\frac{m-1}{2}, 3n-3\right),  & 	\text{if $mn\equiv 1\pmod{4}$},\\
  \left(\frac{m-1}{2}, 3n-1\right), & 	\text{if $mn\equiv 3\pmod{4}$}.\\ 
\end{cases}  
$\\

Consider the submatrix 
$U=
\left[\begin{array}{c}
  S \\
  T \\ 
\end{array}\right]
$
of 
$C=
\left[\begin{array}{c}
  A \\
  B \\ 
\end{array}\right]
$, where $S$ and $T$ are submatrices of $A$ and $B$, respectively:
\[S=
 \left[\begin{array}{ccc}
 (a_1(\gamma_1), 0)   \;&\;  (b(\gamma_1), 1)    \;&\;  (c(\gamma_1), 1) \\[2mm]
   (c(\gamma_2), 1)   \;&\;  (a_2(\gamma_2), 0)  \;&\;  (b(\gamma_2), 1) \\[2mm] 
 (a_1(\gamma_3), 0)   \;&\;  (b(\gamma_3), 1)    \;&\;  (c(\gamma_3), 1) \\[2mm]
   (c(\gamma_4), 1)   \;&\;  (a_2(\gamma_4), 0)  \;&\;  (b(\gamma_4), 1) \\[2mm]    
 \end{array}\right], 
\]
\[
 T=
 \left[\begin{array}{ccc}
  (0,0,0) \;&\;    (0,0, 0) \;&\;  (0,2n+2,0) \\[2mm]
  (\bar{\gamma},0) \;&\;     -(\bar{\gamma},0) \;&\;  (0,2, 0)   \\[2mm] 
 \end{array}\right].
\]
Considering that $\gamma_1\neq \gamma_3$ and $\gamma_2\neq \gamma_4$, then $S$ contains four distinct rows of $A$. 
We denote by $C'$ the matrix obtained from $C$ by replacing $U$ with the matrix
$U'$ defined below
\[U'=
 \left[\begin{array}{ccc}
            (0,0,0)  \;&\;   (b(\gamma_1), 1)    \;&\;  (b(\gamma_2), 1) \\[2mm]
   (c(\gamma_2), 1)  \;&\;          (0,0, 0)     \;&\;  (c(\gamma_1), 1) \\[2mm] 
   (\bar{\gamma},0)      \;&\;   (b(\gamma_3), 1)    \;&\;  (b(\gamma_4), 1) \\[2mm]
   (c(\gamma_4), 1)  \;&\;        -(\bar{\gamma},0) \;&\;  (c(\gamma_3), 1) \\[2mm]    
  (a_1(\gamma_3), 0) \;&\;   (a_2(\gamma_4), 0)  \;&\;  (0,2n+2,0)  \\[2mm] 
  (a_1(\gamma_1), 0) \;&\;   (a_2(\gamma_2), 0)  \;&\;  (0,2, 0) \\[2mm]  
 \end{array}\right]. 
\]
Note that each column of $U'$ is a permutation of the corresponding column of $U$.
Therefore, each column of $C'$ is a permutation of $\Gamma$.

Considering that
\begin{align*}
&\text{
 $\sum S_h$ has order $2n$, for every $h=1,\ldots, 4$,
}  \\
&\text{
 $\sum T_1 = (0,2n+2, 0)$
 has order $u$,
}  \\
&\text{
 $\sum T_2 = (0,2, 0)$
 has order $2n$, and
}  \\
&\text{
 $\sum U'_h$ has order $2n$, for every $h=1,\ldots, 5$
} \\
&\text{
 $\sum U'_6$ has order $m$,
} 
\end{align*}
and denoting by $L_{C'}$ the list of row sums of $C'$, we have that
$\omega(L_{C'}) = [\,^{8mn-1} 2n, \,^{1}m].$ 
The result follows by applying Lemma \ref{lem:g=3impliesall} to $C'$.
\end{proof}

\begin{theorem}\label{k=1,alpha=1,n=1}
Let $m\geq1$ be an odd integer and let $k=n=1$. Then a
$RSM_\Gamma(\Gamma,g;[^{1} m,   \;^{8m-1} 2])$ exists.
\end{theorem}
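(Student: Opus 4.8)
\medskip
\noindent\emph{Proof plan.}
Since $\Gamma\simeq Dih(\Z_m\times\Z_4)$ is solvable of order $8m$ and its Sylow $2$-subgroup ($\simeq Dih(\Z_4)$) is non-cyclic, Corollary~\ref{lem:g=3impliesall} lets me reduce everything to the construction of a single $RSM_\Gamma(\Gamma,3;[^1m,\,^{8m-1}2])$. As in the companion results I would split $\Gamma=(\Gamma\setminus2\Gamma)\cup 2\Gamma$, with $|\Gamma\setminus2\Gamma|=6m$, $2\Gamma=2G\times\{0\}$ and $2G=\Z_m\times 2\Z_4$, and assemble the matrix as $C=\left[\begin{array}{c}A\\ B\end{array}\right]$ followed by a local correction. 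For the top block I would take $\Delta_h=[^{2m}(0,2)]$ for $h=1,2,3$ (each has zero sum in $2G$, so Theorem~\ref{teohall} supplies $\Delta_h$-permutations $\varphi_h$, necessarily $\varphi_h(z)=z+(0,2)$) and set $A=[A(\varphi,z)]_{z\in 2G}$; by Lemma~\ref{A(x)rowsum} each of its $6m$ row-sums equals $(0,2,0)$, of order $2$.

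For the $2\Gamma$ block I would run the construction of Theorem~\ref{k=1,alpha=1} verbatim at $n=1$: the near-identity $\Lambda_1$-permutation $\psi_1$ of Theorem~\ref{special_1} (fixing $(0,0)$ and $\bar\gamma=(-\tfrac{m-1}{2},m+1)$) and the $\Lambda_2=[^{2m}(0,2)]$-permutation $\psi_2$ of Theorem~\ref{teohall} still exist, producing $L_B=[^{2m-1}(0,2,0),\,^1(0,2n+2,0)]$. The one genuine change at $n=1$ is that the exceptional entry $(0,2n+2,0)=(0,4,0)=(0,0,0)$ degenerates to the identity, so $\omega(L_C)=[^{8m-1}2,\,^11]$ before any correction.

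No plain concatenation can finish the job: every block row-sum lies in the abelian part $G\times\{0\}$, whose only element of order $2$ is $(0,2,0)$, and the Hall zero-sum conditions force the order-$m$ row-sums (those with first coordinate a unit mod $m$) to cancel in pairs, so the block construction always yields an even number of order-$m$ factors (indeed none for the $A,B$ above), never the single one required. I would therefore insert a swap $U\to U'$ recombining entries across blocks. The useful identities are $(b(\gamma),1)+(b(\gamma'),1)=(b(\gamma)-b(\gamma'),0)$ (and likewise for $c$), together with the fact that $(\bar\gamma,0)$ has first coordinate $-\tfrac{m-1}{2}$, which is a unit mod $m$. Using the identity row of $B$, the row $(\bar\gamma,0)\,(-\bar\gamma,0)\,(0,2,0)$ of $B$, and a few order-$2$ rows of $A$, I would build one new row whose sum is $(g_0,0,0)$ with $g_0$ coprime to $m$ (hence of order $m$), while routing the leftover entries into rows summing to $(0,2,0)$ or to reflections (also of order $2$) and keeping each column a permutation of its original. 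A split on $m\bmod 4$ enters here, just as $mn\bmod 4$ does in Theorem~\ref{k=1,alpha=1}.

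The hard part is engineering this swap in the cramped $n=1$ geometry. When $n\ge3$ one can pick $\gamma_1,\gamma_2$ with equal first coordinate (so their $b$- and $c$-differences have vanishing first coordinate, yielding order-$2n$ rows) and second coordinates summing to $0\pmod{4n}$ (so the companion $a$-row has second coordinate $0$ and first coordinate $2\gamma_{1,x}$, of order $m$). At $n=1$ the second coordinate is confined to $\{0,2\}$, and these requirements become incompatible: one cannot force the $a$-row's second coordinate to vanish while keeping the $b,c$-rows of order $2$ and all elements distinct, and indeed the $n\ge3$ elements simply collide. The resolution I would pursue is to let $(\bar\gamma,0)$ supply the coprime first coordinate and cancel its second coordinate against a reflection difference; this works cleanly for one residue of $m\bmod 4$ and needs a mirror arrangement for the other. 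The degenerate cases $m=1$ (where $\Gamma\simeq Dih(\Z_4)$ and the target is $[^11,\,^72]$) and possibly $m=3$ I expect to settle by direct small constructions. Once $U'$ is fixed, a routine check that its columns permute those of $U$ and that $\omega(L_{C'})=[^1m,\,^{8m-1}2]$, followed by Corollary~\ref{lem:g=3impliesall}, completes the proof.
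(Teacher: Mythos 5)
Your reduction and block skeleton coincide with the paper's: the same $A$ built from $\Delta_h=[^{2m}(0,2)]$ (so that $\varphi_h(z)=z+(0,2)$ is forced), the same $B$ from a near-identity $\psi_1$ and a $[^{2m}(0,2)]$-permutation $\psi_2$, the same intermediate profile $\omega(L_C)=[^{8m-1}2,\,^1 1]$, and the same idea that a local swap $U\to U'$ must convert the lone identity row-sum into one of order $m$. But your proof stops exactly where the theorem's content begins: you never exhibit $U'$. What you offer instead is a speculative arrangement built around $(\bar{\gamma},0)$ and the row $B_{\bar{\gamma}}$, with a case split on $m\bmod 4$, and with $m=1$ (and ``possibly $m=3$'') deferred to unspecified direct constructions. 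That is a plan, not a proof, and the feasibility analysis motivating it is wrong: you claim that at $n=1$ one cannot make the $a$-row's second coordinate vanish while keeping the $b,c$-rows of order $2$, yet the paper does precisely this, with a swap simpler than the six-row one you are transplanting from Theorem \ref{k=1,alpha=1}.

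Concretely, the paper takes $\gamma_1=(1,0)$ and $\gamma_2=(1,2)$; since the second coordinate ranges over $2\Z_4=\{0,2\}$, each $\varphi_h$ \emph{swaps} $\gamma_1$ and $\gamma_2$. It lets $U$ consist of the rows $A(\varphi,\gamma_1)_1$, $A(\varphi,\gamma_2)_2$ and the all-zero row $T=B_{(0,0)}$ (available since $\psi_2$ may be normalized so that $\psi_2(0,2)=(0,0)$), and rearranges within columns to
\[
U'=\left[\begin{array}{ccc}
(a_1(\gamma_1),0) & (a_2(\gamma_2),0) & (0,0,0)\\
(c(\gamma_2),1) & (b(\gamma_1),1) & (b(\gamma_2),1)\\
(0,0,0) & (0,0,0) & (c(\gamma_1),1)
\end{array}\right].
\]
The first row sums to $(1,3,0)+(1,1,0)=(2,0,0)$, of order $m$, while the other two rows sum to reflections, automatically of order $2$; the three zero entries of $T$ are exactly the slack that absorbs the displaced entries --- the mechanism you were trying to extract from $\bar{\gamma}$, which in fact plays no role in the paper's $n=1$ argument. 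This works uniformly for every odd $m\geq 1$, with no residue split and no small-case patching. So the gap is genuine: the one step you leave open is the theorem's entire combinatorial content, and the route you sketch for it both overcomplicates and misdiagnoses the $n=1$ geometry.
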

\begin{proof} Recall that $2\Gamma = 2G\times \{0\}$ where $2G = \Z_{m}\times 2\Z_{4}$. 
Also, $C_{g}[\Gamma\setminus2\Gamma]$ and $C_{g}[2\Gamma]$ decompose $C_{g}[8m]$.

By Theorem \ref{teohall}, there exists a $\Delta_h$-permutation 
$\varphi_h$ of $2G$, where
$\Delta_h=
    \left[ 
           \,^{2mn} (0,2)
 \right]$, for every $h\in\{1,2,3\}$.
Let $A = \left[A(\varphi, z)\right]$ be the row block-matrix whose blocks are 
the $3\times 3$ matrices $A(\varphi, z)$ for $z\in  2G$. 
Note that $A$ is a
$6mn\times 3$ matrix whose columns are permutations of $\Gamma\setminus 2\Gamma$.
Also, letting $L_A$ be the list of row-sums of $A$, 
by Lemma \ref{A(x)rowsum} we have that 
\[
\omega(L_A) = \omega(\Delta_1 \, \cup\, \Delta_2 \, \cup\, \Delta_3) = 
\omega(\left[\,^{6m} (0,2,0) \right]) = 
\left[\,^{6m}2\right].
\]

By Theorem \ref{teohall}
and Remark \ref{rem:deltapermutation}, there are $\Lambda_i$-permutations $\psi_i$ of $2G$, with $i\in\{1,2\}$, such that
\begin{enumerate}
  \item $\Lambda_1= \left[ \,^1(0,0)\right] \ \cup \
	     \left[^1z\mid z\in 2G, z\neq (0,2)\right]$, with $\psi(0,0)=(0,0)$, and
  \item $
        \Lambda_2=
          \left[ 
            \,^{2m} (0,2)
         \right]
        $.
\end{enumerate}
Let $B$ denote the $2^{k}m \times 3$ matrix whose rows $B_\gamma$, indexed over $2G$, are defined as follows: 
$
B_\gamma = \left[(\gamma,0) \;\;\; (-\psi_1(\gamma),0) \;\;\; (\psi_2(\delta),0)
 \right]
$ 
 where
\[\delta=
  \begin{cases}
                     (0, 2) & \text{if $\gamma= (0,0)$},\\
    \psi_1(\gamma) - \gamma & \text{otherwise}.
  \end{cases}
\]
Note that the columns of $B$ are permutations of $2\Gamma$. Also, 
one can check that for the list $L_B$ of row sums of $B$
we have 
\[
  L_B = \left[ 
           \,^{2m-1} (0,2,0), \,^{1} (0,0,0)
         \right],
\]
hence, $\omega(L_B) = [\,^{2m-1}2, \,^{1}1]$.
It follows that each column of 
$C=
\left[\begin{array}{c}
  A \\
  B \\ 
\end{array}\right]
$
is a permutation of $\Gamma$. Clearly, the list of row sums of $C$
is $L_C = L_A\,\cup\,L_B$ and $\omega(L_C) = [\,^{8m-1}2, \,^{1}1]$.

Set $\gamma_1=(1,0)$ and $\gamma_2=(1,2)$, and note that each $\varphi_i$ swaps 
$\gamma_1$ and $\gamma_2$.
Consider the submatrix 
$U=
\left[\begin{array}{c}
  S \\
  T \\ 
\end{array}\right]
$
of 
$C=
\left[\begin{array}{c}
  A \\
  B \\ 
\end{array}\right]
$, where $S$ and $T$ are submatrices of $A$ and $B$, respectively defined as follows:
\[S=
 \left[\begin{array}{ccc}
 (a_1(\gamma_1), 0)   \;&\;  (b(\gamma_1), 1)    \;&\;  (c(\gamma_1), 1) \\[2mm]
   (c(\gamma_2), 1)   \;&\;  (a_2(\gamma_2), 0)  \;&\;  (b(\gamma_2), 1) \\[2mm] 
 \end{array}\right], 
\]
\[
 T=
 \left[\begin{array}{ccc}
  (0,0,0) \;&\;    (0,0, 0) \;&\;  (0,0, 0) \\[2mm]
 \end{array}\right].
\]
We denote by $C'$ the matrix obtained from $C$ by replacing $U$ with the matrix
$U'$ defined below
\[U'=
 \left[\begin{array}{ccc}
 (a_1(\gamma_1), 0)   \;&\; (a_2(\gamma_2), 0)     \;&\;  (0,0, 0) \\[2mm]
   (c(\gamma_2), 1)   \;&\;  (b(\gamma_1), 1)  \;&\;  (b(\gamma_2), 1) \\[2mm] 
    (0,0,0) \;&\;    (0,0, 0) \;&\;  (c(\gamma_1), 1) \\[2mm]
 \end{array}\right]. 
\]
Note that each column of $U'$ is a permutation of the corresponding column of $U$.
Therefore, each column of $C'$ is a permutation of $\Gamma$.

Considering that
\begin{align*}
&\text{
 $\sum S_h$ has order $2$, for  $h=1,2$,
}  \\
&\text{
 $\sum T = (0,0, 0)$
 has order $1$,
}  \\
&\text{
 $\sum U'_1=(2,0,0)$ has order $m$, 
} \\
&\text{
 $\sum U'_h$ has order $2$, for  $h=2, 3$
} 
\end{align*}
and denoting by $L_{C'}$ the list of row sums of $C'$, we have that
$\omega(L_{C'}) = [\,^{8mn-1} 2, \,^{1}m].$
The result follows by applying Lemma \ref{lem:g=3impliesall} to $C'$.
\end{proof}

\subsection{The proof of Theorem \ref{row-sum exists} when $k=0$}\label{case0}
Let $\alpha$ and $\beta$ be non-negative integers such that
$\alpha+\beta=4mn$. First, we assume that both $\alpha\neq1$ and $\beta\neq1$ and let 
$\beta_1=mn -\alpha_1$
$\beta_2=3mn -\alpha_2$ where $\alpha_1$ and $\alpha_2$ are defined as follows:
\begin{align*}
  \alpha_1=&
  \begin{cases}
    mn & \text{if $mn+2 \leq \alpha\leq 4mn-2$ or $\alpha=4mn$,} \\      
    \alpha-4  & \text{if $\alpha = mn+1$,}\\
    \alpha-2 & \text{if $\alpha=mn-1$}, \\   
    \alpha & \text{if $2\leq \alpha\leq mn-2$ or $\alpha\in\{0,mn\}$}, \\       
  \end{cases}
\\  
  \alpha_2 =& \alpha-\alpha_1.
\end{align*}
Since $\alpha_1\neq 1$ and $\beta_1\neq 1$, 
Theorem \ref{2Gamma} guarantees the existence of  an $RSM_\Gamma(2\Gamma, g,[\,^{\alpha_1}m,\,^{\beta_1} n ])$.
Furthermore, one can check that $\alpha_2\neq1$ and $\beta_2\neq 1$.
Hence, by Theorem \ref{Gamma-2Gamma} there is 
an $RSM_\Gamma(\Gamma\setminus 2\Gamma, g,[\,^{\alpha_2}m,\,^{\beta_2} n ])$.
Therefore, 
$C=\left[
\begin{array}{c}
 A \\
 B
\end{array}
\right]
$ and Lemma \ref{lem:g=3impliesall} provide the desired RSM.

It is then left to deal with the case $\alpha=1$; indeed, since both $m$ and $n$ are odd, the case $\beta=1$ can be obtained by exchanging the roles of $m$ and $n$.

Let $\alpha=1$.
By Theorem \ref{teohall}, there exists a $\Delta_h$-permutation 
$\varphi_h$ of $\Z_m\times 2\Z_{2n}$, with
\[\Delta_h=
    \left[ 
           \,^{(mn+1)/2} (0,{2}), \,^{(mn-3)/2} (0,-2) , (0,-4)
    \right],
\]
where $\omega(\Delta_h)=[^{mn}n]$, for every $h\in\{1,2,3,4\}$.
Now let $A = \left[A(\varphi, (x,y))\right]$ be the column block-matrix whose blocks are 
the matrices $A(\varphi, (x,y))$ for $(x,y)\in \Z_m\times 2 \Z_{2n}$, 
where $\varphi=(\varphi_1, \varphi_2, \varphi_3)$. Note that $A$ is a
$(3\cdot 2^{k}mn)\times 3$ matrix whose columns are permutations of $\Gamma\setminus 2\Gamma$.
Also, letting $L_A$ be the list of row-sums of $A$, 
by Lemma \ref{A(x)rowsum} we have that 
\[
\omega(L_A) = \omega(\Delta_1) \ \cup\ \omega(\Delta_2) \ \cup\ \omega(\Delta_3) = 
\left[\,^{3mn}n\right].
\]
Now let $B$ be the matrix whose rows are indexed over $2G$ such that
\[
B_{(x,y)}=\left[\begin{array}{rrr}(x,y,0)\;&\;(-2x,-2y,0)\;&\; (\varphi_4(x,y),0)\end{array}\right].
\] 
Notice that each column of $B$ is a permutation of $2G$ and
$\sum B(x,y)=(\varphi_4(x,y)-(x,y),0)$. Hence, letting $L_B$ be the list of row-sums of $B$ we have that
 $\omega(L_B)=\omega(\Delta_4)=\left[\,^{mn}n\right]$. Therefore,
the matrix $C = 
\left[\begin{array}{c}
  A \\
  B
\end{array}\right]
$ 
is an $RSM_\Gamma(\Gamma, 3; [\,^{4mn}n])$.

By Remark \ref{rem:deltapermutation}, 
we can assume that the permutations $\varphi_2$ and $\varphi_3$ used to define $A$  satisfy the condition $\varphi_h(1,2) = (1,0)$, hence
\[a_h(1,2) = \varphi_h(1,2) + (0,1) = (1,1),\] 
for $h=2$ or $3$. 
Furthermore, we can assume that 
\[
B_{(0,-2)}=\left[
\begin{array}{rrr}
(0,-2,0)\;&\; (0,4,0)\;&\; (0,-4,0)
\end{array}
\right].
\]
Now, consider the following matrices:
\[
  U=
  \left[\begin{array}{c}
  B_{(0,-2)} \\
  A(\varphi,(1,2))_2 \\
  A(\varphi,(1,2))_3 \\    
\end{array}\right]
=
 \left[\begin{array}{rrr}
     (0,-2, 0) \;&\;   (0,4, 0)  \;&\;  (0,-4, 0)     \\[2mm]
  (c(1,2), 1) \;&\; (a_2(1,2),0)   \;&\;  (b(1,2), 1)     \\[2mm]
    (b(1,2), 1) \;&\;   (c(1,2), 1)  \;&\;  (a_3(1,2),0)  
\end{array}\right]
\]
\begin{align*}
  U'=&
 \left[\begin{array}{rrr}
    (0,-2, 0) \;&\;   (a_2(1,2), 0) \;&\;  (a_3(1,2), 0) \\[2mm]
  (b(1,2),1) \;&\;   (0,4, 0) \;&\; (b(1,2),1) \\[2mm]   
   (c(1,2), 1) \;&\;   (c(1,2), 1) \;&\;   (0,-4, 0)
\end{array}\right]\\
=&
 \left[\begin{array}{rrr}
    (0,-2, 0) \;&\;   (1,1, 0) \;&\;  (1,1, 0) \\[2mm]
  (b(1,2),1) \;&\;   (0,4, 0) \;&\; (b(1,2),1) \\[2mm]   
   (c(1,2), 1) \;&\;   (c(1,2), 1) \;&\;   (0,-4, 0)
\end{array}\right].
\end{align*}
Notice that $\sum U'_1=(2,0,0)$ and $\sum U'_h=(0,-4,0)$ for $h=2,3$.
Note that $U$ is a submatrix  of $C$, while each column of $U'$ is a permutation of the corresponding column of $U$. Therefore, by replacing the block $U$ of $C$ with $U'$, we obtain a new $4mn\times 3$ matrix $C'$ whose columns are still permutations of $\Gamma$. 
Denoting by $L'$ the list of row sums of $C'$, 
and taking into account the values $\sum U'_h$, 
we have  that
$\omega(L_{C'}) = [\,^1m, \,^{4mn-1}n]$. Therefore, $C'$  is
an $RSM_\Gamma(\Gamma, 3; [\,^1m, \,^{4mn-1}n])$, and
the result follows by applying Lemma \ref{lem:g=3impliesall} to $C'$.

\section{The proof of Theorem \ref{main}}\label{sec:main}

\begin{theorem}\label{C_g[u]}
Let $g\geq 3$, $k\geq 0$, and let $m,n\geq 1$ be odd integers. Then HWP$(C_g[2^{k+2}mn]; gm, 2^kgn; \alpha, \beta)$ has a solution if and only if $\alpha+\beta=2^{k+2}mn$.
\end{theorem}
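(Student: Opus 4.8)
The plan is to deduce both directions from the row-sum machinery already developed, so that all the genuine content sits in Theorems \ref{row-sum exists} and \ref{ell}. Set $u=2^{k+2}mn$. For the necessity of $\alpha+\beta=u$ I would simply invoke Theorem \ref{nec}: the graph $C_g[u]$ is regular of degree $2u$, so any $2$-factorization into $\alpha+\beta$ factors forces $2(\alpha+\beta)=2u$, i.e. $\alpha+\beta=2^{k+2}mn$. The remaining divisibility requirements are automatic, since $gu/(gm)=2^{k+2}n$ and $gu/(2^kgn)=4m$ are integers and both $gm,2^kgn\geq 3$.

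For sufficiency, the crucial observation is that both groups of interest have order exactly $u$: the generalized dihedral group $\Gamma=Dih(\Z_m\times\Z_{2^{k+1}n})$ of Section \ref{Sec:matrices}, and the abelian group $\Gamma'=\Z_2\times\Z_{2^{k+1}}\times\Z_{mn}$. Hence, for either group $H\in\{\Gamma,\Gamma'\}$, choosing $S=H$ gives $C_g[H,H]\cong C_g[u]$, and an $RSM_H(H,g;\Sigma)$ with $\omega(\Sigma)=[\,^{\alpha}m,\,^{\beta}2^kn]$ produces, via Theorem \ref{row-sum matrix to factorizations}, a solution of $\mathrm{GOP}(C_g[u];[\,^{\alpha}gm,\,^{\beta}2^kgn])$. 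Since $g\cdot 2^kn=2^kgn$, this is exactly HWP$(C_g[u];gm,2^kgn;\alpha,\beta)$. So it remains to exhibit, for every pair with $\alpha+\beta=u$, a row-sum matrix over a group of order $u$ with order-list $[\,^{\alpha}m,\,^{\beta}2^kn]$.

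I would then split on the parity of $\alpha$. Because $u$ is even, $\alpha$ and $\beta=u-\alpha$ share the same parity, so only two cases occur. If both are even, I write $\alpha=2\gamma$, $\beta=2\delta$ with $\gamma+\delta=2^{k+1}mn$ and apply Theorem \ref{ell} with $\ell=k+1\geq 1$ to $\Gamma'$, which delivers $RSM_{\Gamma'}(\Gamma',g;[\,^{2\gamma}m,\,^{2\delta}2^kn])$ (the hypothesis $0\leq k\leq\ell$ holds). If both are odd, then $\beta\neq 0$ and neither $\alpha$ nor $\beta$ lies in $\{0,2,4\}$; this excludes both exceptional cases of Theorem \ref{row-sum exists}, which accordingly supplies $RSM_\Gamma(\Gamma,g;[\,^{\alpha}m,\,^{\beta}2^kn])$ over the dihedral group. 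The two cases are exhaustive, finishing the argument.

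The genuinely hard constructions—building the dihedral row-sum matrices—have already been carried out in Theorem \ref{row-sum exists}, so the only real obstacle here is the bookkeeping: verifying that the pairs left open by Theorem \ref{row-sum exists} are all of even type (case (1) has $\beta=0$ and $\alpha=u$ even, while case (2) has $\alpha,\beta\in\{0,2,4\}$), and hence are recovered by Theorem \ref{ell} over the abelian group of the same order. The clean parity remark $\alpha\equiv\beta\pmod 2$, forced by $u$ being even, is precisely what lets these two complementary tools fit together without gaps.
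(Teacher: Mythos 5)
Your proof is correct and takes essentially the same route as the paper, which disposes of this theorem in one line as a consequence of Theorems \ref{row-sum matrix to factorizations}, \ref{ell} and \ref{row-sum exists}; your parity split (even pairs $\alpha=2\gamma$, $\beta=2\delta$ via Theorem \ref{ell} with $\ell=k+1$, odd pairs via Theorem \ref{row-sum exists}, whose possible exceptions all have $\alpha,\beta$ even) is exactly the bookkeeping the paper leaves implicit. Your necessity argument via the regularity condition of Theorem \ref{nec} is likewise what the paper tacitly relies on, so there is nothing to correct.
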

\begin{proof} It is a straightforward consequence of
Theorems \ref{row-sum matrix to factorizations}, \ref{ell}, and \ref{row-sum exists}.
\end{proof}

We are now ready to prove the main result of this paper.\\

\noindent
\textbf{Theorem \ref{main}.}
\emph{
Let $v$, $M$ and $N$ be integers greater than 3, and let $\ell=\mbox{lcm}(M,N)$.
then a  solution to $\mathrm{HWP}(v; M, N; \alpha, \beta)$ exists if and only if $\ell\mid v$, except possibly when
\begin{itemize}
\item $\gcd(M,N)\in \{1,2\}$;
\item $4$ does not divide $v/\ell$;
\item $v=4\ell, 8\ell$;
\item $v=16\ell$ and $\gcd(M,N)$ is odd;
\item $v=24\ell$ and $\gcd(M,N)=3$.
\end{itemize}
}
\begin{proof} 
  Let $g=\gcd(M,N)>2$. We may assume that $M=gm$ and $N = 2^kgn$, where both $m$ and $n$ are odd positive integers and $k\geq0$; hence, $\ell=\mbox{lcm}(M,N) =  2^kgmn$. By assumption, we also have that $v=4\ell s$ with $s\geq 3$, $g$ is even when $s=4$, and
  $(g,s)\neq (3,6)$.
  
  Let $(t, \epsilon)=(s/2, 2)$ if $s\geq 6$ is even, otherwise set $(t,\epsilon)=(s,1$).
  We start by factorizing  $K_v$  into two graphs $G_0$ and $G_1$
  where $G_o$ is the vertex disjoint union of $t$ copies of $K_{4\ell \epsilon}$, while
  $G_1\simeq K_t[4\ell \epsilon]$.
  Also, set $(\alpha_0, \beta_0) = (2\ell \epsilon-1,0)$ if $\alpha \geq 2\ell \epsilon-1$, otherwise, set $(\alpha_0, \beta_0) = (0, 2\ell \epsilon-1)$, and set 
  $(\alpha_1, \beta_1) = (\alpha, \beta) - (\alpha_0, \beta_0)$.

  By Theorem \ref{Liu} there is a $C_g$-factorization of $K_t[g\epsilon]$, hence there exists
  a $C_g[4\ell/g]$-factorization of $K_t[g\epsilon][4\ell/g] \simeq K_t[4\ell \epsilon]\simeq G_1$. Note that $4\ell/g = 2^{k+2}mn$, therefore by applying Theorem \ref{C_g[u]} to each component of every $C_g[4\ell/g]$-factor, we obtain a solution to HWP$(G_1; gm, 2^{k}n; \alpha_1, \beta_1)$. By adding a solution to HWP$(G_0; gm, 2^{k}n; \alpha_0, \beta_0)$ we obtain the assertion.
\end{proof}

\section*{Acknowledgments}
The authors gratefully acknowledge support from the following sources. 
A.C.\ Burgess and P.\ Danziger have received support from NSERC Discovery Grants RGPIN-2019-04328 and 
RGPIN-2022-03816 respectively. 
A. Pastine ackowledges partial support from Universidad Nacional de San Luis, Argentina, grants PROICO 03-0918 and PROIPRO 03-1720, and from ANPCyT grants PICT-2020-SERIEA-04064 and PICT-2020-SERIEA-00549.
T. Traetta has received support from GNSAGA of Istituto Nazionale di Alta Matematica.

\end{document}